\numberwithin{equation}{subsection}
\newtheorem{theorem}{Theorem}[subsection]
\newtheorem{lemma}[theorem]{Lemma}
\newtheorem{example}[theorem]{Example}
\newtheorem{corollary}[theorem]{Corollary}
\newtheorem{definition}[theorem]{Definition}
\newtheorem{proposition}[theorem]{Proposition}
\newtheorem{condition}[theorem]{Condition}
\newtheorem*{thm1}{Theorem 1}
\newtheorem*{thm2}{Theorem 2}
\theoremstyle{remark}
\newtheorem{rmk}[theorem]{Remark}
\newcommand{\GZip}{\mathop{\text{$G$-{\tt Zip}}}\nolimits}
\newcommand{\GF}{\mathop{\text{$G$-{\tt ZipFlag}}}\nolimits}
\newcommand{\VB}{\mathfrak{VB}}
\newskip\procskipamount
\newskip\interskipamount
\newskip\refskipamount
\newcommand{\procskip}{\vskip\procskipamount}
\newcommand{\interskip}{\vskip\interskipamount}
\newcommand{\refskip}{\vskip\refskipamount}
\newcommand{\procbreak}{\par
   \ifdim\lastskip<\procskipamount\removelastskip
   \penalty-100
   \procskip\fi
   \noindent\ignorespaces}
\newcommand{\titlebreak}{\par%
\ifdim\lastskip<\interskipamount\removelastskip%
\penalty10000%
\interskip\fi%
\noindent}%
\newcommand{\interbreak}{\par%
\ifdim\lastskip<\interskipamount\removelastskip%
\penalty-100%
\interskip\fi%
\noindent\ignorespaces}%
\newcommand{\refbreak}{\par%
\ifdim\lastskip<\refskipamount\removelastskip%
\penalty-100%
\refskip\fi%
\noindent\ignorespaces}%
\newcounter{listcounter}
\newcounter{deflistcounter}
\newcounter{equivcounter}
\newskip{\itemsepamount}
\newskip{\topsepamount}
\newenvironment{assertionlist}{%
  \begin{list}
    {\upshape (\arabic{listcounter})}
    {\setlength{\leftmargin}{18pt}
     \setlength{\rightmargin}{0pt}
     \setlength{\itemindent}{0pt}
     \setlength{\labelsep}{5pt}
     \setlength{\labelwidth}{13pt}
     \setlength{\listparindent}{\parindent}
     \setlength{\parsep}{0pt}
     \setlength{\itemsep}{\itemsepamount}
     \setlength{\topsep}{\topsepamount}
     \usecounter{listcounter}}}
  {\end{list}}
\newenvironment{definitionlist}{%
  \begin{list}
    {\upshape (\alph{deflistcounter})}
    {\setlength{\leftmargin}{18pt}
     \setlength{\rightmargin}{0pt}
     \setlength{\itemindent}{0pt}
     \setlength{\labelsep}{5pt}
     \setlength{\labelwidth}{13pt}
     \setlength{\listparindent}{\parindent}
     \setlength{\parsep}{0pt}
     \setlength{\itemsep}{\itemsepamount}
     \setlength{\topsep}{\topsepamount}
     \usecounter{deflistcounter}}}
  {\end{list}}
\newenvironment{equivlist}{%
  \begin{list}
    {\upshape (\roman{equivcounter})}
    {\setlength{\leftmargin}{18pt}
     \setlength{\rightmargin}{0pt}
     \setlength{\itemindent}{0pt}
     \setlength{\labelsep}{5pt}
     \setlength{\labelwidth}{13pt}
     \setlength{\listparindent}{\parindent}
     \setlength{\parsep}{0pt}
     \setlength{\itemsep}{\itemsepamount}
     \setlength{\topsep}{\topsepamount}
     \usecounter{equivcounter}}}
  {\end{list}}
\newenvironment{bulletlist}{%
  \begin{list}
    {\upshape \textbullet}
    {\setlength{\leftmargin}{18pt}
     \setlength{\rightmargin}{0pt}
     \setlength{\itemindent}{0pt}
     \setlength{\labelsep}{6pt}
     \setlength{\labelwidth}{12pt}
     \setlength{\listparindent}{\parindent}
     \setlength{\parsep}{0pt}
     \setlength{\itemsep}{\itemsepamount}
     \setlength{\topsep}{\topsepamount}}}
  {\end{list}}
\newcommand{\Acal}{{\mathcal A}}
\newcommand{\Bcal}{{\mathcal B}}
\newcommand{\Ccal}{{\mathcal C}}
\newcommand{\Fcal}{{\mathcal F}}
\newcommand{\Gcal}{{\mathcal G}}
\newcommand{\Hcal}{{\mathcal H}}
\newcommand{\Lcal}{{\mathcal L}}
\newcommand{\Mcal}{{\mathcal M}}
\newcommand{\Ocal}{{\mathcal O}}
\newcommand{\Scal}{{\mathcal S}}
\newcommand{\Ucal}{{\mathcal U}}
\newcommand{\Vcal}{{\mathcal V}}
\newcommand{\Xcal}{{\mathcal X}}
\newcommand{\Zcal}{{\mathcal Z}}
\newcommand{\pfr}{{\mathfrak p}}
\newcommand{\vfr}{{\mathfrak v}}
\newcommand{\Sfr}{{\mathfrak S}}
\renewcommand{\AA}{\mathbb{A}}
\newcommand{\CC}{\mathbb{C}}
\newcommand{\FF}{\mathbb{F}}
\newcommand{\GG}{\mathbb{G}}
\newcommand{\NN}{\mathbb{N}}
\newcommand{\QQ}{\mathbb{Q}}
\newcommand{\RR}{\mathbb{R}}
\newcommand{\ZZ}{\mathbb{Z}}
\DeclareMathOperator{\Pic}{Pic}
\newcommand{\Ascr}{{\mathscr A}}
\newcommand{\Lscr}{{\mathscr L}}
\newcommand{\Pscr}{{\mathscr P}}
\newcommand{\Sscr}{{\mathscr S}}
\DeclareMathOperator{\ad}{ad}
\newcommand{\dR}{{\rm dR}}
\DeclareMathOperator{\Gal}{Gal}
\DeclareMathOperator{\Hom}{Hom}
\DeclareMathOperator{\rank}{rank}
\DeclareMathOperator{\Span}{Span}
\DeclareMathOperator{\pr}{pr}
\DeclareMathOperator{\Ker}{Ker}
\DeclareMathOperator{\Rep}{Rep}
\DeclareMathOperator{\Sbt}{Sbt}
\DeclareMathOperator{\Sh}{Sh}
\DeclareMathOperator{\Sym}{Sym}
\DeclareMathOperator{\GS}{GS}
\DeclareMathOperator{\SL}{SL}
\DeclareMathOperator{\GL}{GL}
\DeclareMathOperator{\GSp}{GSp}
\DeclareMathOperator{\Sp}{Sp}
\DeclareMathOperator{\U}{U}
\DeclareMathOperator{\GU}{GU}
\newcommand{\shgx}{\Sh(\mathbf G, \mathbf X)}
\newcommand{\egx}{E(\mathbf G, \mathbf X)}
\newcommand{\gx}{(\mathbf G, \mathbf X)}
\newcommand{\gofaf}{\mathbf G(\mathbf A_f)}
\newcommand{\End}{{\rm End}}
\newcommand{\id}{{\rm Id}}
\newcommand{\cf}{{\em cf. }}
\newcommand{\diag}{{\rm diag}}
\newcommand{\crys}{{\rm crys}}
\renewcommand{\Im}{{\rm Im}}
\DeclareMathOperator{\Std}{Std}
\renewcommand{\div}{{\rm div}}
\DeclareMathOperator{\flag}{flag}
\DeclareMathOperator{\Ind}{Ind}
\DeclareMathOperator{\cara}{Char}
\DeclareMathOperator{\Hasse}{Hasse}
\DeclareMathOperator{\Flag}{Flag}
\DeclareMathOperator{\ev}{ev}
\DeclareMathOperator{\Ha}{Ha}
\begin{document}

\author{Naoki Imai and Jean-Stefan Koskivirta}

\title{Partial Hasse invariants for Shimura varieties\\ of Hodge-type} 

\date{}

\maketitle

\begin{abstract}
For a connected reductive group $G$ over a finite field, we define partial Hasse invariants on the stack of $G$-zip flags. We obtain similar sections on the flag space of Shimura varieties of Hodge-type. They are mod $p$ automorphic forms which cut out a single codimension one stratum. We study their properties and show that such invariants admit a natural factorization through higher rank automorphic vector bundles. We define the socle of an automorphic vector bundle, and show that partial Hasse invariants lie in this socle.
\end{abstract}

\section{Introduction}

Originally, partial Hasse invariants were defined for Hilbert--Blumenthal Shimura varieties by Goren and Andreatta--Goren in \cite{Goren-partial-hasse}, \cite{Andreatta-Goren-book} as certain Hilbert modular forms in characteristic $p$. The vanishing locus of each partial Hasse invariant coincides with the closure of a codimension one Ekedahl--Oort stratum. The product of all these sections is the classical Hasse invariant, which vanishes exactly outside the ordinary locus. Similar constructions can be found in \cite{Hernandez-invariants-de-Hasse}, \cite{bijakowski-partial-Hasse}, \cite{Bijakowski-Hernandez-Groupes-p-divisibles}. In the case of Hilbert modular varieties, Diamond--Kassaei have shown in \cite{Diamond-Kassaei, Diamond-Kassaei-cone-minimal} that partial Hasse invariants play an important role in the theory of mod $p$ automorphic forms. View the weight of a Hilbert modular form as a tuple $(a_1, \dots , a_n)\in \ZZ^n$ (where $n$ is the degree of the totally real extension defining the corresponding Hilbert--Blumenthal Shimura variety). Diamond--Kassaei define the minimal cone $C_{\min}\subset \ZZ^n$ and show the following: 
\begin{assertionlist}
\item Any automorphic form whose weight lies outside of $C_{\min}$ is divisible by (a specific) partial Hasse invariant.
\item The cone $C_{\Hasse}$ generated by the weights of all partial Hasse invariants spans (over $\QQ_{\geq 0}$) all  possible weights of Hilbert modular forms in characteristic $p$.
\end{assertionlist}
In \cite{Goldring-Koskivirta-global-sections-compositio}, Goldring and the second named author showed that (2) also holds for Picard modular surfaces and Siegel threefolds (conditionally on a Koecher principle for strata that should follow from \cite{Lan-Stroh-stratifications-compactifications}). The present paper provides a general theory of partial Hasse invariants. It serves as a preliminary to a follow-up paper (\cite{Goldring-Imai-Koskivirta-weights}) in which we discuss generalizations of Diamond--Kassaei's result to other Shimura varieties. In another joint paper by Goldring and the second author, we show that both (1) and (2) generalize naturally to Picard Shimura varieties (at split or inert primes), Siegel Shimura varieties of rank $2$ and $3$.

Let $(\mathbf{G},\mathbf{X})$ be a Shimura datum of Hodge-type and $Sh_K(\mathbf{G},\mathbf{X})$ the corresponding Shimura variety with level $K$ over a number field $\mathbf{E}$. Let $\mu\colon \GG_{\mathrm{m},\CC}\to \mathbf{G}_{\CC}$ be the cocharacter deduced from $\mathbf{X}$. Write $\mathbf{L}\subset \mathbf{G}_\CC$ for the Levi subgroup centralizing $\mu$. Choose a Borel pair $(\mathbf{B},\mathbf{T})$ such that $\mu$ factors through $\mathbf{T}$. Denote by $I\colonequals \Delta_{\mathbf{L}}$ the simple roots of $\mathbf{L}$ with respect to the opposite Borel of $\mathbf{B}$. 
For any $\lambda\in X^*(\mathbf{T})$, there is a vector bundle $\Vcal_I(\lambda)$ on $Sh_K(\mathbf{G},\mathbf{X})$, modeled on the $\mathbf{L}$-representation $\Ind_{\mathbf{B_L}}^{\mathbf{L}}(\lambda)$, where $\mathbf{B_L}\colonequals \mathbf{B}\cap \mathbf{L}$. Let $p$ be a prime of good reduction, and let $\Scal_K$ be the smooth canonical model over $\Ocal_{\mathbf{E}_\pfr}$ (where $\pfr|p$) constructed by Kisin (\cite{Kisin-Hodge-Type-Shimura}) and Vasiu (\cite{Vasiu-Preabelian-integral-canonical-models}). The vector bundle $\Vcal_I(\lambda)$ extends naturally to $\Scal_K$. We define partial Hasse invariants as certain sections of $\Vcal_I(\lambda)$ over $S_K\colonequals \Scal_K\otimes_{\Ocal_{\mathbf{E}_\pfr}}\overline{\FF}_p$. More precisely, let $G$ be the special fiber of a reductive $\ZZ_p$-model of $\mathbf{G}_{\QQ_p}$, and write $B,T,L$ for the subgroups of $G$ naturally induced by $\mathbf{B}, \mathbf{T}, \mathbf{L}$. Moonen--Wedhorn and Pink--Wedhorn--Ziegler define the stack of $G$-zips of type $\mu$ denoted by $\GZip^\mu$. By Zhang (\cite{Zhang-EO-Hodge}), there is a smooth, surjective map $\zeta\colon S_K\to \GZip^\mu$. We construct partial Hasse invariants on this stack.

More generally, one can start with an arbitrary connected, reductive group $G$ over $\FF_q$, endowed with a cocharacter $\mu\colon \GG_{\textrm{m},\overline{\FF}_q}\to G_{\overline{\FF}_q}$ (in the context of Shimura varieties, we take $q=p$). Goldring and the second named author defined in \cite{Goldring-Koskivirta-Strata-Hasse} the stack of $G$-zip flags $\GF^\mu$. One can define similarly the flag space $\Flag(S_K)$ of $S_K$. There is a natural map $\pi\colon \GF^\mu \to \GZip^\mu$ whose fibers are flag varieties. For $\lambda\in X^*(T)$, there is a line bundle $\Vcal_{\flag}(\lambda)$ on $\GF^\mu$ and one has $\pi_*(\Vcal_{\flag}(\lambda))=\Vcal_I(\lambda)$. In particular, we may identify global sections of $\Vcal_I(\lambda)$ and $\Vcal_{\flag}(\lambda)$. Furthermore, $\GF^\mu$ admits a natural stratification $(\Ccal_{w})_{w\in W}$ (where $W=W(G,T)$ is the Weyl group), similar to the Bruhat decomposition of $G$. The codimension one strata are $(\Ccal_{s_\alpha w_0})_{\alpha \in \Delta}$, where $\Delta$ is the set of simple roots, $s_\alpha$ is the reflection along $\alpha$, and $w_0\in W$ is the longest element. Then, a (flag) partial Hasse invariant for $\alpha$ is a section $\Ha_\alpha\in H^0(\GF^\mu,\Vcal_{\flag}(\lambda_\alpha))$ whose vanishing locus is the Zariski closure of $\Ccal_{w_0s_\alpha}$. A simple argument (Proposition \ref{prop-set-of-flagPHI}) shows that such sections always exist. More generally, for any dominant $\chi\in X^*(T)$, we define a section $\Ha_\chi$ of $\Vcal_{\flag}(\lambda_\chi)$ where $\lambda_\chi = \chi -qw_{0,I}(\sigma^{-1}\chi)$ 
(here $\sigma$ denotes the action of Frobenius and $w_{0,I}$ is the longest element in $W_L=W(L,T)$). 
We call $\Ha_\chi$ a Schubert section. If $\chi_\alpha$ is orthogonal to $\Delta\setminus \{\alpha\}$ and $\langle \chi_\alpha,\alpha^\vee\rangle >0$, then $\Ha_\alpha \colonequals \Ha_{\chi_\alpha}$ is a partial Hasse invariant for $\alpha$.

We now explain the main results of the paper. First, we construct group-theoretical Verschiebung homomorphisms for vector bundles on $\GZip^\mu$. We first explain the analogy with abelian schemes. Let $A$ be an abelian scheme over an $\FF_q$-scheme $S$, and let $\Omega=e^*(\Omega_{A/S})$ denote the Hodge bundle (where $e \colon S\to A$ is the unit section). It is a subbundle of $\Mcal\colonequals H^1_{\dR}(A/S)$. Then the Verschiebung $V\colon A^{(q)}\to A$ induces maps $\vfr_{\Omega}\colon \Omega\to \Omega^{(q)}$ and $\vfr_\Mcal\colon\Mcal\to \Mcal^{(q)}$. Furthermore, the image of $\vfr_\Mcal$ is $\Omega^{(q)}$ and $\vfr_\Mcal$ extends the map $\vfr_\Omega$. We construct such maps in a quite general setting. Let $(G,\mu)$ be an $\FF_q$-reductive group $G$ endowed with a cocharacter $\mu\colon \GG_{\textrm{m},\overline{\FF}_q}\to G_{\overline{\FF}_q}$. Let $L$ be the centralizer of $\mu$, and assume that it is defined over $\FF_q$. For an $L$-representation $(V,\rho)$, we define the Griffiths--Schmid subspace $V_{\GS}\subset V$ (see \S\ref{subsec-verschiebung}). When $(V,\rho)$ satisfies $V=V_{\GS}$, we show that the vector bundle $\Vcal(\rho)$ (attached to $\rho$) on $\GZip^\mu$ admits a natural Verschiebung map
\begin{equation}
 \vfr_\rho\colon \Vcal(\rho)\to \Vcal(\rho^{[1]})   
\end{equation}
where $\rho^{[1]}$ is the composition of $\rho$ with the Frobenius homomorphism $\varphi\colon L\to L$. This map can be viewed as a generalization of the map $\vfr_\Omega$. 
This generalization will be useful to extend and study objects related to usual Verschiebung maps. Actually, we will explain below that this generalized Verschiebung gives factorizations of partial Hasse invariants.

In the particular case when $V=V_I(w_{0,I}w_0\chi)$ with $\chi\in X^*(T)$ dominant, $V$ satisfies $V_{\GS}=V$, hence we obtain a Verschiebung map $\vfr_\chi\colon \Vcal_I(w_{0,I}w_0\chi)\to \Vcal_I(w_{0,I}w_0\chi^{[1]})$. By Proposition \ref{prop-PiX-isom}, $V_I(w_{0,I}w_0\chi)$ is naturally a sub-$L$-representation of the $G$-representation $V_\Delta(\chi)=\Ind_{B}^G(\lambda)$. 
Denote by $\Vcal_\Delta(\chi)$ the attached vector bundle on $\GZip^\mu$. Similarly to $\vfr_\Mcal$ extending $\vfr_\Omega$ for abelian schemes, we prove the following:

\begin{thm1}[Theorem \ref{ver-thmG}]\label{ver-thmG-intro}
Let $\chi\in X^*(T)$ be a dominant character. There exists a map of vector bundles $\vfr_\chi \colon  \Vcal_{\Delta}(\chi) \to \Vcal_{\Delta}(\chi^{[1]})$ over $\GZip^{\mu}$ with image $\Vcal_I(w_{0,I}w_0\chi^{[1]})$ and such that $\vfr_\chi$ extends the Verschiebung map of $\Vcal_I(w_{0,I}w_0\chi)$. 
$$\xymatrix@1@M=5pt@H=0pt{
\Vcal_{\Delta}(\chi) \ar[r]^-{\vfr_\chi} & \Vcal_I(w_{0,I}w_0\chi^{[1]})  
\ar@{^{(}->}[r] 
& \Vcal_{\Delta}(\chi^{[1]}).\\
\Vcal_I(w_{0,I}w_0\chi)  
\ar@{^{(}->}[u] 
\ar[ru]_-{\vfr_{\chi}} & &
}$$
\end{thm1}

We call $\vfr_\chi$ the Verschiebung homomorphism of $\Vcal_{\Delta}(\chi)$. The motivation for constructing such maps is that Schubert sections (in particular, partial Hasse invariants) admit a factorization in terms of Verschiebung homomorphisms. Specifically, let $\chi$ be dominant and set $\Vcal=\Vcal_I(-w_{0,I}\chi)$. Define $\Vcal^{[1]}\colonequals \Vcal_I(-w_{0,I}\chi^{[1]})$ and write $\vfr\colon \Vcal\to \Vcal^{[1]}$ for the Verschiebung map provided by Theorem 1. We show (Corollary \ref{cor-decomp-Schubert-Fq}) that there are natural maps of vector bundles over $\GF^\mu$:
\begin{equation}\label{eq-cor-decompos-Schubert-sec-Fq-intro}
\Vcal_{\flag}(-\chi) \to \pi^*\Vcal \xrightarrow{\pi^*\vfr} \pi^*\Vcal^{[1]} \to  \Vcal_{\flag}(-w_{0,I}(\sigma^{-1}\chi))^{\otimes q}
\end{equation}
such that the composition coincides with $\Ha_{\chi}$ as a section of $\Vcal_{\flag}(\lambda_{\chi})$. In particular, this factorization applies to partial Hasse invariants.

Lastly, we prove that partial Hasse invariants are primitive automorphic forms. Here, we do not assume that $P$ is defined over $\FF_q$. For an $L$-dominant $\lambda$, denote by $L_I(\lambda)\subset V_I(\lambda)$ the socle of $V_I(\lambda)$. Write $\Vcal_I^L(\lambda)$ for the vector bundle attached to $L_I(\lambda)$. We call sections of $\Vcal_I^L(\lambda)$ primitive automorphic forms of weight $\lambda$. We explain the reason for studying such forms. We would like to understand the ring of automorphic forms on $\GZip^\mu$, which is defined as
\begin{equation}
   R_{I}(G,\mu) \colonequals \bigoplus_{\lambda\in X_{+,I}^*(T)}H^0(\GZip^\mu,\Vcal_I(\lambda)).
\end{equation}
This $k$-algebra was defined in \cite[\S5]{Koskivirta-automforms-GZip} (see also \cite[\S6.1]{Imai-Koskivirta-vector-bundles}). It is a functorial invariant attached to the pair $(G,\mu)$. In general, it is very difficult to determine $R_I(G,\mu)$, but we conjectured that it is of finite-type. We can define a subgroup $R_I^L(G,\mu)$ as the direct sum of $H^0(\GZip^\mu,\Vcal^L_I(\lambda))$ for $\lambda\in X_{+,I}^*(T)$. The subgroup $R_I^L(G,\mu)$ is a more tractable object, because $L_I(\lambda)$ is a simple object and we can use Steinberg's tensor product theorem to study it. Furthermore, each vector bundle $\Vcal_I(\lambda)$ admits a filtration whose graded pieces are of the form $\Vcal^L_I(\lambda')$, which gives a relation between $R_I^L(G,\mu)$ and $R_I(G,\mu)$. Our main result is that partial Hasse invariants lie in $R_I^L(G,\mu)$ (under a certain assumption, which is satisfied in most cases). Specifically, assume that for each $\alpha\in \Delta$, there exists a character $\chi_{\alpha}\in X^*(T)$ satisfying
\begin{enumerate}[(a)]
\setlength{\leftmargin}{18pt}
     \setlength{\rightmargin}{0pt}
     \setlength{\itemindent}{0pt}
     \setlength{\labelsep}{5pt}
     \setlength{\labelwidth}{13pt}
     \setlength{\listparindent}{\parindent}
     \setlength{\parsep}{0pt}
     \setlength{\itemsep}{\itemsepamount}
     \setlength{\topsep}{\topsepamount}
\item $0 < \langle \chi_{\alpha},\alpha^\vee \rangle<q$ and $\langle \chi_{\alpha}, \beta^{\vee}\rangle =0$ for all $\beta\in \Delta \setminus \{\alpha\}$,
\item one has $L_I(\chi_{\alpha})=V_I(\chi_{\alpha})$ and $L_I(-w_{0,I}(\sigma^{-1}\chi_{\alpha}))=V_I(-w_{0,I}(\sigma^{-1}\chi_{\alpha}))$.
\end{enumerate}
For example, for $G=\Sp(2n)_{\FF_q}$, 
the fundamental weights satisfy (a) and (b) (even for $q=2$).

\begin{thm2}[Theorem \ref{thm-LIlambda}]\label{intro-thm-LIlambda}
Let $\alpha\in \Delta$ and let $\chi_{\alpha}\in X^*(T)$ satisfying \textnormal{(a)} and \textnormal{(b)} above. Set $\lambda_\alpha \colonequals \chi_{\alpha}-qw_{0,I}(\sigma^{-1}\chi_{\alpha})$. 
Then there exists a section $\Ha_\alpha$ over $\GF^\mu$ of the line bundle $\Vcal_{\flag}(\lambda_\alpha)$, such that
\begin{assertionlist}
\item $\Ha_\alpha$ is a flag partial Hasse invariant for $\alpha$, 
\item $\Ha_\alpha$ is a primitive automorphic form on $\GZip^\mu$.
\end{assertionlist}
\end{thm2}
Finally, we give in \S\ref{sec-examples} the modular interpretation of partial Hasse invariants in the case of Siegel-type and unitary Shimura varieties (at inert and split primes).

\subsection*{Acknowledgments}
We would like to thank the referee for their useful comments on the paper. This work was supported by JSPS KAKENHI Grant Number 21K13765 and 22H00093. 

\section{\texorpdfstring{Preliminaries and reminders on the stack of $G$-zips}{}}

\subsection{Notation}\label{subsec-notation}

Let $p$ be a prime number and $q$ a power of $p$. 
Let $\FF_q$ denote a finite field with $q$ elements. 
We write $k$ for an algebraic closure $\overline{\FF}_q$ of $\FF_q$. Let $G$ be a connected reductive group over $\FF_q$. For a $k$-scheme $X$, we denote by $X^{(q)}$ its $q$-th power Frobenius twist and by $\varphi \colon X\to X^{(q)}$ its relative Frobenius morphism. Write $\sigma \in \Gal(k/\FF_q)$ for the $q$-power Frobenius. We write $(B,T)$ for a Borel pair of $G$ defined over $\FF_q$, i.e.\ $T$ is a maximal torus, $B$ a Borel subgroup in $G$ and $T \subset B$. We do not assume that $T$ is split over $\FF_q$. Let $B^+$ be the Borel subgroup of $G_k$  opposite to $B$ with respect to $T$, which is the unique Borel subgroup such that $B^+\cap B=T$. We will use the following notations:
\begin{bulletlist}
\item $X^*(T)$ (resp.\ $X_*(T)$) denotes the group of characters (resp.\ cocharacters) of $T$. The group $\Gal(k/\FF_q)$ acts naturally on these groups. Let $W=W(G_k,T)$ be the Weyl group of $G_k$. Similarly, $\Gal(k/\FF_q)$ acts on $W$, compatibly with the action of $W$ on characters and cocharacters.
\item $\Phi\subset X^*(T)$ is the set of $T$-roots of $G$, and $\Phi_+\subset \Phi$ is the set of positive roots with respect to $B^+$ (i.e.\ $\alpha \in \Phi_+$ if the $\alpha$-root group $U_{\alpha}$ is contained in $B^+$). This convention differs from other authors. We use it to match conventions of \cite{Goldring-Koskivirta-Strata-Hasse}, \cite{Koskivirta-automforms-GZip}.
\item $\Delta\subset \Phi_+$ is the set of simple roots. 
\item For $\alpha \in \Phi$, let $s_\alpha \in W$ be the corresponding reflection. Then $(W,\{s_\alpha \mid \alpha \in \Delta\})$ is a Coxeter system. 
Write $\ell  \colon W\to \NN$ for the length function and $\leq$ for the Bruhat order on $W$. We have $\ell(s_\alpha)=1$ for all $\alpha\in \Delta$. Let $w_0$ denote the longest element of $W$.
\item Let $K$ be a subset of $\Delta$. We write $W_K$ for the subgroup of $W$ generated by $\{s_\alpha \mid \alpha \in K\}$. Let $w_{0,K}$ be the longest element in $W_K$.
Let ${}^KW$ (resp.\ $W^K$) denote the subset of elements $w\in W$ which have minimal length in the coset $W_K w$ (resp.\ $wW_K$). Then ${}^K W$ (resp.\ $W^K$) is a set of representatives of $W_K\backslash W$ (resp.\ $W/W_K$). The map $g\mapsto g^{-1}$ induces a bijection ${}^K W\to W^K$. The longest element in the set ${}^K W$ is $w_{0,K} w_0$.
\item $X_{+}^*(T)$ denotes the set of dominant characters, i.e.\ characters $\lambda\in X^*(T)$ such that $\langle \lambda,\alpha^\vee \rangle \geq 0$ for all $\alpha \in \Delta$.
\item For a subset $I\subset \Delta$, let $X_{+,I}^*(T)$ denote the set of characters $\lambda\in X^*(T)$ such that $\langle \lambda,\alpha^\vee \rangle \geq 0$ for all $\alpha \in I$. We call them $I$-dominant characters.
\item Let $P\subset G_k$ be a parabolic subgroup containing $B$ and let $L\subset P$ be the unique Levi subgroup of $P$ containing $T$. Define $I_P\subset \Delta$ as the unique subset such that $W(L,T)=W_{I_P}$. For an arbitrary parabolic subgroup $P\subset G_k$ containing $T$, define $I_P\subset \Delta$ by $I_P \colonequals I_{P'}$ where $P'$ is the unique conjugate of $P$ containing $B$. Moreover, set  $\Delta^P \colonequals \Delta \setminus I_P$.
\item For all $\alpha\in \Phi$, choose an isomorphism $u_\alpha\colon \GG_{\mathrm{a}}\to U_\alpha$ so that 
$(u_{\alpha})_{\alpha \in \Phi}$ is a realization in the sense of \cite[8.1.4]{Springer-Linear-Algebraic-Groups-book}. In particular, we have 
\begin{equation}\label{eq:phiconj}
 t u_{\alpha}(x)t^{-1}=u_{\alpha}(\alpha(t)x), \quad \forall x\in \GG_{\mathrm{a}},\  \forall t\in T.
\end{equation}
\item Let $\phi_{\alpha}\colon \SL_2\to G$ denote the map attached to $\alpha$, as in \cite[9.2.2]{Springer-Linear-Algebraic-Groups-book}. It satisfies
\[
 \phi_\alpha 
 \left( \begin{pmatrix}
 1 & x \\ 0 & 1 
 \end{pmatrix}\right) = u_{\alpha}(x), \quad 
 \phi_\alpha 
 \left( \begin{pmatrix}
 1 & 0 \\ x & 1 
 \end{pmatrix}\right) = u_{-\alpha}(x).
\]
\item Fix a $B$-representation $(V,\rho)$. For $j\in \ZZ$ and $\alpha\in \Phi$, we define a map $E_{\alpha}^{(j)} \colon V \to V$ as follows. Let $V=\bigoplus_{\nu \in X^*(T)}V_\nu$ be the weight decomposition of $V$. For $v\in V_\nu$, we can write uniquely
\[
 u_{\alpha}(x)v=\sum_{j \geq 0} x^j E_{\alpha}^{(j)}(v), \quad \forall x\in \GG_{\mathrm{a}},
\]
for elements $E_{\alpha}^{(j)}(v) \in V_{\nu+j\alpha}$ (\cite[Proposition 3.3.2]{Donkin-good-filtrations-LNM}). Extend $E_{\alpha}^{(j)}$ by additivity to a map $V\to V$. 
For $j<0$, we put $E_{\alpha}^{(j)}=0$.

\end{bulletlist}

\subsection{\texorpdfstring{The stack of $G$-zips}{}}

We recall some facts about the stack of $G$-zips of Pink--Wedhorn--Ziegler in \cite{Pink-Wedhorn-Ziegler-zip-data}.

\subsubsection{Definitions} \label{subsec-zipdatum}
Let $G$ be a connected, reductive group over $\FF_q$. A zip datum means a tuple $\Zcal \colonequals (G,P,L,Q,M,\varphi)$ consisting of the following objects:
\begin{assertionlist}
    \item $P, Q\subset G_k$ are parabolic subgroups of $G_k$.
    \item $L\subset P$ and $M\subset Q$ are Levi subgroups such that $L^{(q)}=M$.
\end{assertionlist}
 
For an algebraic group $H$, let $R_{\mathrm{u}}(H)$ denote the unipotent radical of $H$. If $P'\subset G_k$ is a parabolic subgroup with Levi subgroup $L'\subset P'$, any $x\in P'$ can be written uniquely as $x=\overline{x}u$ with $\overline{x}\in L'$ and $u\in R_{\mathrm{u}}(P')$. Denote by $\theta^{P'}_{L'} \colon P'\to L'$ the map $x\mapsto \overline{x}$. Since $M=L^{(q)}$, we have a Frobenius isogeny $\varphi \colon L\to M$. The zip group is the subgroup of $P\times Q$ defined by
\begin{equation}\label{zipgroup}
E \colonequals \{(x,y)\in P\times Q \mid  \varphi(\theta^P_L(x))=\theta^Q_M(y)\}.
\end{equation}
Then $E$ is the subgroup of $P\times Q$ generated by $R_{\mathrm{u}}(P)\times R_{\mathrm{u}}(Q)$ and elements of the form $(a,\varphi(a))$ with $a\in L$. 
Let $G\times G$ act on $G$ by $(a,b)\cdot g \colonequals agb^{-1}$, and let $E$ act on $G$ by restricting this action to $E$. 
We define the stack of $G$-zips of type $\Zcal$ (\cite{Pink-Wedhorn-Ziegler-zip-data},\cite{Pink-Wedhorn-Ziegler-F-Zips-additional-structure}) as the quotient stack
\[
\GZip^\Zcal = \left[E\backslash G_k \right].
\]
The stack $\GZip^\Zcal$ is the stack over $k$ such that for any $k$-scheme $S$, the groupoid $\GZip^{\Zcal}(S)$
is the category of tuples $\underline{I}=(I,I_P,I_Q,\iota)$, where $I$ is a $G$-torsor over $S$, $I_P\subset I$ and $I_Q\subset I$ are respectively a $P$-subtorsor and a $Q$-subtorsor of $I$, and $\iota \colon (I_P/R_{\mathrm{u}}(P))^{(p)}\to I_Q/R_{\mathrm{u}}(Q)$ is an isomorphism of $M$-torsors.

\subsubsection{Cocharacter datum} \label{subsec-cochar}
A cocharacter datum is a pair $(G,\mu)$ where $G$ is a connected, reductive group over $\FF_q$ and $\mu \colon \GG_{\mathrm{m},k}\to G_k$ is a cocharacter. One can attach to $(G,\mu)$ a zip datum $\Zcal_\mu$ as follows. Let $P_+(\mu)$ (resp.\ $P_-(\mu)$) denote the unique parabolic subgroup of $G_k$ such that $P_+(\mu)(k)$ (resp.\ $P_-(\mu)(k)$) consists of the elements $g\in G(k)$ satisfying the condition that the map 
\[
 \GG_{\mathrm{m},k} \to G_{k}; \  
 t\mapsto\mu(t)g\mu(t)^{-1} \quad (\textrm{resp.\ } t\mapsto\mu(t)^{-1}g\mu(t))
\]
extends to a morphism of varieties $\AA_{k}^1\to G_{k}$. We obtain a pair of opposite parabolics $(P_+(\mu),P_{-}(\mu))$ whose intersection $L(\mu)=P_+(\mu)\cap P_-(\mu)$ is the centralizer of $\mu$ (it is a common Levi subgroup of $P_+(\mu)$ and $P_-(\mu)$). We put $P \colonequals P_-(\mu)$, $Q \colonequals (P_+(\mu))^{(q)}$, $L \colonequals L(\mu)$ and $M \colonequals  L(\mu)^{(q)}$. The tuple $\Zcal_\mu \colonequals (G,P,L,Q,M,\varphi)$ is a zip datum, which we call the zip datum attached to the cocharacter datum $(G,\mu)$. We write simply $\GZip^\mu$ for $\GZip^{\Zcal_\mu}$. We will always consider zip data arising in this way from a cocharacter datum.

\subsubsection{Frames} \label{sec-frames}
Let $\Zcal=(G,P,Q,L,M)$ be a zip datum. In this paper, a frame for $\Zcal$ is a triple $(B,T,z)$ where $(B,T)$ is a Borel pair of $G_k$ defined over $\FF_q$ such that $B\subset P$ and 
$z\in W$ is an element such that 
\begin{equation}\label{eqBorel}
{}^z \! B \subset Q \quad \textrm{and} \quad
 B\cap M= {}^z \! B\cap M. 
\end{equation}
A frame (as defined here) may not always exist. However, if $(G,\mu)$ is a cocharacter datum and $\Zcal_\mu$ is the associated zip datum by \S\ref{subsec-cochar}, then we can find $g\in G(k)$ such that $\Zcal_{\mu'}$ for $\mu'=\ad(g)\circ \mu$ admits a frame. Hence, it is harmless to assume the existence of a frame, and we consider only zip data which admit frames. 
For a zip datum $(G,P,L,Q,M,\varphi)$, we define subsets $I,J,\Delta^P \subset \Delta$ as follows:
\begin{equation}\label{equ-IJDeltaP}
 I \colonequals I_P, \quad J \colonequals I_Q, \quad \Delta^P=\Delta\setminus I.  
\end{equation}

\begin{lemma}[{\cite[Lemma 2.3.4]{Goldring-Koskivirta-zip-flags}}]\label{lem-framemu}
Let $\mu \colon \GG_{\mathrm{m},k}\to G_k$ be a cocharacter, and let $\Zcal_\mu$ be the attached zip datum. Assume that $(B,T)$ is a Borel pair defined over $\FF_q$ such that $B\subset P$. Define the element
\[
z \colonequals w_0 w_{0,J}=\sigma(w_{0,I})w_0.
\]
Then $(B,T,z)$ is a frame for $\Zcal_\mu$.
\end{lemma}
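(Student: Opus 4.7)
The plan is to verify the identity $w_0 w_{0,J}=\sigma(w_{0,I})w_0$ together with the two conditions in \eqref{eqBorel}. I would first unwind the definitions: since $B\subset P=P_{-}(\mu)$, the cocharacter $\mu$ is $B^{+}$-dominant, so $I=I_P=\{\alpha\in\Delta\mid\langle\alpha,\mu\rangle=0\}$, $M=L^{(q)}=L(\sigma(\mu))$, and $W(M,T)=W_{\sigma(I)}$. But $Q=P_{+}(\sigma(\mu))$ contains $B^{+}$ rather than $B$, so the paper's convention for $I_Q$ (conjugate $Q$ by $w_0$ to bring it into standard position $\supset B$) produces $J=-w_0(\sigma(I))=\sigma(I)^{*}$, where $(\cdot)^{*}\colonequals -w_0(\cdot)$ denotes the opposition involution on $\Delta$. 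The first identity is then immediate from $w_0 w_{0,K} w_0^{-1}=w_{0,K^{*}}$ (valid for any $K\subset\Delta$ since conjugation by $w_0$ is a length-preserving involution sending $s_\alpha$ to $s_{\alpha^{*}}$): taking $K=J$ gives $w_0 w_{0,J} w_0^{-1}=w_{0,\sigma(I)}=\sigma(w_{0,I})$.

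For the inclusion ${}^z B\subset Q$, I would translate into a condition on roots. Writing $\Phi_Q=\{\alpha\mid\langle\alpha,\sigma(\mu)\rangle\geq 0\}$, the inclusion is equivalent to $z\Phi_{-}\subset\Phi_Q$, i.e.\ to $z^{-1}\sigma(\mu)=w_{0,J}w_0\sigma(\mu)$ being $B^{+}$-antidominant. Now $w_0\sigma(\mu)$ is antidominant (as $\sigma(\mu)$ is dominant), and its stabilizer in $W$ is $w_0 W(M,T) w_0^{-1}=W_{\sigma(I)^{*}}=W_J$; hence $w_{0,J}$ fixes $w_0\sigma(\mu)$ and the claim follows.

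For the equality $B\cap M={}^z B\cap M$, using the alternative form $z=\sigma(w_{0,I})w_0$ and $w_0 B w_0^{-1}=B^{+}$, I would rewrite ${}^z B=\sigma(w_{0,I})B^{+}\sigma(w_{0,I})^{-1}$. Since $\sigma(w_{0,I})\in W(M,T)$ normalizes $M$, intersecting with $M$ reduces the problem to conjugating the Borel $B^{+}\cap M$ of $M$ by the longest element $\sigma(w_{0,I})=w_{0,\sigma(I)}$ of $W(M,T)$, which swaps it with the opposite Borel $B\cap M$ (the two Borels having root groups indexed by $\Phi_M\cap\Phi_{+}$ and $\Phi_M\cap\Phi_{-}$ respectively, which are opposite positive systems in $\Phi_M$).

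The one place where care is required is bookkeeping. The convention that $\Phi_{+}$ consists of the roots of $B^{+}$ (not $B$), combined with the definition of $I_Q$ via conjugation of $Q$ into standard position, makes it easy to confuse $W_{\sigma(I)}=W(M,T)$ with $W_J=W_{\sigma(I)^{*}}=W(w_0 M w_0^{-1},T)$. Once this distinction is kept clear, each of the three verifications is a one-line Weyl-group calculation.
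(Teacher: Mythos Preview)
The paper does not supply its own proof of this lemma: it is stated with a citation to \cite[Lemma 2.3.4]{Goldring-Koskivirta-zip-flags} and no argument is given. Your proposal is therefore being compared against the standard verification one would expect, and it is correct. You identify $J=-w_0(\sigma(I))$ correctly from the paper's convention for $I_Q$ (conjugating $Q$ into a parabolic containing $B$), deduce the identity $w_0 w_{0,J}=\sigma(w_{0,I})w_0$ from the fact that conjugation by $w_0$ induces the opposition involution on $\Delta$, and then check the two frame conditions by reducing each to a statement about where $z^{-1}\sigma(\mu)$ lies (antidominance) and how $w_{0,\sigma(I)}$ acts on Borels of $M$. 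Your closing remark about the bookkeeping hazard between $W_{\sigma(I)}$ and $W_J$ is exactly the point where errors tend to occur, and you have handled it cleanly.
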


\subsubsection{\texorpdfstring{Parametrization of the $E$-orbits in $G$}{}} \label{subsec-zipstrata}
Recall that the group $E$ defined in \eqref{zipgroup} acts on $G_k$. By \cite[Proposition 7.1]{Pink-Wedhorn-Ziegler-zip-data}, there are finitely many $E$-orbits in $G_k$. The $E$-orbits are smooth and locally closed in $G$. This gives a stratification of $G_k$, in the sense that the closure of an $E$-orbit is a union of $E$-orbits. We review below the parametrization of $E$-orbits following \cite{Pink-Wedhorn-Ziegler-zip-data}.

For $w\in W$, fix a representative $\dot{w}\in N_G(T)$, such that $(w_1w_2)^\cdot = \dot{w}_1\dot{w}_2$ whenever $\ell(w_1 w_2)=\ell(w_1)+\ell(w_2)$ (this is possible by choosing a Chevalley system, \cite[ XXIII, \S6]{SGA3}). For $w\in W$, define $G_w$ as the $E$-orbit of $\dot{w}\dot{z}^{-1}$. If no confusion occurs, we write $w$ instead of $\dot{w}$. For $w,w'\in {}^I W$, write $w'\preccurlyeq w$ if there exists $w_1\in W_L$ such that $w'\leq w_1 w \sigma(w_1)^{-1}$. This defines a partial order on ${}^I W$ (\cite[Corollary 6.3]{Pink-Wedhorn-Ziegler-zip-data}).

\begin{theorem}[{\cite[Theorem 7.5, Theorem 11.2]{Pink-Wedhorn-Ziegler-zip-data}}] \label{thm-E-orb-param}
We have two bijections:
\begin{align} \label{orbparam}
{}^I W &\to \{ \textrm{$E$-orbits in $G_k$} \}, \quad w\mapsto G_w \\  
\label{dualorbparam} W^J &\to \{ \textrm{$E$-orbits in $G_k$} \}, \quad w\mapsto G_w.
\end{align}
For $w\in {}^I W$, one has $\dim(G_w)= \ell(w)+\dim(P)$ and the Zariski closure of $G_w$ is 
\begin{equation}\label{equ-closure-rel}
\overline{G}_w=\bigsqcup_{w'\in {}^IW,\  w'\preccurlyeq w} G_{w'}.
\end{equation}
\end{theorem}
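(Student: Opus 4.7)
The plan is to follow the classical Bruhat-decomposition approach, exploiting the frame $(B,T,z)$ together with the decomposition $E = (R_{\mathrm{u}}(P) \times R_{\mathrm{u}}(Q)) \cdot \Lambda$, where $\Lambda \colonequals \{(a, \varphi(a)) : a \in L\} \simeq L$ is the graph of Frobenius. Setting $r \colonequals \dim R_{\mathrm{u}}(P) = \dim R_{\mathrm{u}}(Q) = (\dim G - \dim L)/2$, one has $\dim E = \dim L + 2r$ and $\dim P = \dim L + r$, so the dimension formula $\dim G_w = \ell(w) + \dim P$ is equivalent to $\dim \Stab_E(\dot{w}\dot{z}^{-1}) = r - \ell(w)$. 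Note the top orbit $w = w_{0,I} w_0$ then has dimension $\dim G$, as expected.

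For the bijection \eqref{orbparam}, the first task is to show every $E$-orbit meets $\{\dot{w}\dot{z}^{-1} : w \in {}^I W\}$. Using the frame conditions $B \subset P$ and $\dot{z} B \dot{z}^{-1} \subset Q$ from \eqref{eqBorel}, I would apply the Bruhat decomposition with respect to $(B, \dot{z} B \dot{z}^{-1})$ and absorb the unipotent factors into the $R_{\mathrm{u}}(P) \times R_{\mathrm{u}}(Q)$-action, reducing any $g \in G_k$ to $t \dot{v} \dot{z}^{-1}$ for some $v \in W$, $t \in T$. Applying an element $(\dot{w}_1, \varphi(\dot{w}_1)) \in \Lambda$ with $\dot{w}_1 \in N_L(T)$ replaces $v$ by $w_1 v \sigma(w_1)^{-1}$, while the torus factor is absorbed via Lang--Steinberg on $T$ combined with the Frobenius matching; standard Coxeter-group combinatorics then yield a unique $w \in {}^I W$ in each resulting $W_L$-$\sigma$-twisted class. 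Injectivity and the dimension formula follow from a stabilizer computation: writing the fixed-point condition for $(a, b) \in E$ as $b = \dot{z}\dot{w}^{-1} a \dot{w}\dot{z}^{-1}$ subject to $\varphi(\theta^P_L(a)) = \theta^Q_M(b)$, a root-group analysis shows the unipotent part of the stabilizer has dimension $r - \ell(w)$ (controlled by positive roots of $R_{\mathrm{u}}(P)$ that conjugate into $R_{\mathrm{u}}(Q)$ under $\dot{w}\dot{z}^{-1}$), while the reductive part cuts out a Lang--Steinberg-type equation whose contribution to the dimension vanishes; summing yields $\dim \Stab_E = r - \ell(w)$.

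For the closure relations I would argue in two stages. First, for $w_1 \in W_L$ with representative $\dot{w}_1 \in N_L(T)$, the action of $(\dot{w}_1, \varphi(\dot{w}_1)) \in \Lambda$ carries $\dot{w}\dot{z}^{-1}$ to a representative of $w_1 w \sigma(w_1)^{-1}$, giving $G_w = G_{w_1 w \sigma(w_1)^{-1}}$ and accounting for the $W_L$-twisted conjugation in $\preccurlyeq$. Second, the standard degeneration of Bruhat cells under suitable one-parameter subgroups of $T$ yields $G_{v'} \subset \overline{G}_v$ whenever $v' \leq v$ in the Bruhat order. Combining the two steps produces $G_{w'} \subset \overline{G}_w$ whenever $w' \preccurlyeq w$, and the reverse inclusion follows from the dimension formula and the general fact that orbit closures are unions of orbits. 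The bijection \eqref{dualorbparam} with $W^J$ is then obtained by running the symmetric argument with the roles of $P$ and $Q$ exchanged, or by transporting \eqref{orbparam} through an appropriate bijection ${}^I W \simeq W^J$ induced by $z$. The main obstacle I anticipate is the closure analysis: producing particular inclusions via $\Lambda$-action and Bruhat degeneration is routine, but verifying that no additional relations beyond $\preccurlyeq$ arise requires a careful inductive dimension count together with the Coxeter-theoretic properties of the twisted order on ${}^I W$.
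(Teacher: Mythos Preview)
The paper does not give its own proof of this theorem: it is stated with a citation to \cite[Theorem 7.5, Theorem 11.2]{Pink-Wedhorn-Ziegler-zip-data} and no argument is supplied, so there is nothing in the paper to compare your proposal against. Your sketch is a reasonable outline of the strategy actually carried out in Pink--Wedhorn--Ziegler (reduction to Weyl-group representatives via the frame and Bruhat decomposition, then $W_L$-$\sigma$-twisted conjugation to land in ${}^I W$, a stabilizer computation for the dimension, and Bruhat-type degenerations for the closure relations), but it remains a sketch: in particular, your claim that ``the reverse inclusion follows from the dimension formula and the general fact that orbit closures are unions of orbits'' is not enough to show that no closure relations beyond $\preccurlyeq$ occur, and in the original reference this direction (Theorem 11.2 there) requires a substantial inductive argument on the length together with properties of the twisted order. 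If you intend to replace the citation by a self-contained proof, that step is where the real work lies.
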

There is a unique open $E$-orbit
$U_\Zcal\subset G$ corresponding to the longest elements $w_{0,I}w_0\in {}^I W$ via \eqref{orbparam} and to $w_0w_{0,J}\in W^J$ via \eqref{dualorbparam}. If $\Zcal$ arises from a cocharacter datum (\S\ref{subsec-cochar}), we write $U_\mu$ for $U_{\Zcal_\mu}$. 
In this case we can choose $z=w_{0}w_{0,J}=\sigma(w_{0,I})w_0$ 
(Lemma \ref{lem-framemu}), hence \eqref{dualorbparam} shows that $1\in U_\mu$. Using the terminology pertaining to Shimura varieties, we call $U_\mu$ the $\mu$-ordinary stratum of $G$. The substack $\Ucal_\mu \colonequals [E\backslash U_\mu]$ will be called the $\mu$-ordinary locus. In the context of Shimura varieties of Hodge-type with good reduction, the $\mu$-ordinary locus was first defined to be the generic Newton stratum in \cite{Wedhorn-ordinariness-Shimura-varieties} and \cite{Wortmann-mu-ordinary}, but Wortmann proved in \cite{Wortmann-mu-ordinary} that it coincides with the generic Ekedahl--Oort stratum. This locus was studied for example in \cite{Moonen-Serre-Tate} and  \cite{He-Nie-mu-ordinary}.

For an $E$-orbit $G_w$ (with $w\in {}^I W$ or $w\in W^J$), we write $\Xcal_w \colonequals [E\backslash G_w]$ for the corresponding locally closed substack of $\GZip^\Zcal=[E\backslash G_k]$. We obtain similarly a stratification 
\begin{equation}\label{eq-GZip-stratification}
    \GZip^\Zcal = \bigsqcup_{w\in {}^I W} \Xcal_w
\end{equation}
and one has closure relations between strata similar to \eqref{equ-closure-rel}.

\subsection{Representation theory}\label{subsec-remind}

For an algebraic group $G$ over a field $K$, denote by $\Rep(G)$ the category of algebraic representations of $G$ on finite-dimensional $K$-vector spaces (we will mostly consider the case $K=k=\overline{\FF}_q$). We denote such a representation by $(V,\rho)$, or sometimes simply $\rho$ or $V$. 
For an algebraic group $G$ over $\FF_q$, 
a $G_k$-representation $(V,\rho)$ and a positive integer $m$, 
we denote by $(V^{[m]},\rho^{[m]})$ the representation such that $V^{[m]}=V$ and
\begin{equation}\label{equ-rhom}
  \rho^{[m]} \colon  G_k \xrightarrow{\varphi^m} G_k \xrightarrow{\rho} \GL(V).
\end{equation}

Let $H$ be a split connected, reductive $K$-group and choose a Borel pair $(B_H,T)$ defined over $K$. 
The isomorphism classes of irreducible representations of $H$ are in 1-to-1 correspondence with the dominant characters $X^*_+(T)$ of $T$, 
where the positivity is defined with respect to the Borel subgroup opposite to $B_H$ (\S\ref{subsec-notation}). 
This bijection is given by the highest weight of a representation. For a dominant character $\lambda$, we denote by $L_H(\lambda)$ the corresponding irreducible representation of highest weight $\lambda$. 
If there is no confusion, we simply write $L(\lambda)$ for 
$L_H(\lambda)$. 
If $K$ has characteristic zero, $\Rep(H)$ is semisimple. In characteristic $p$ however, this is no longer true in general. For $\lambda\in X_{+}^*(T)$, let $\Lcal_\lambda$ be the line bundle on the flag variety $H/B_H$ attached to $\lambda$  (\cite[\S5.8]{jantzen-representations}). We define an $H$-representation by 
\begin{equation}
    V_H(\lambda) \colonequals H^0(H/B_H,\Lcal_\lambda).
\end{equation}
Equivalently, $V_H(\lambda)$ is the induced representation $\Ind_{B_H}^{H} \lambda$. The representation $V_H(\lambda)$ is of highest weight $\lambda$. If $\cara(K)=0$, $V_H(\lambda)$ is irreducible, hence $V_H(\lambda)=L_H(\lambda)$. In positive characteristic, this is not true in general, but $L_H(\lambda)$ is always the socle of $V_H(\lambda)$. We view elements of $V_H(\lambda)$ as functions $f \colon H\to \AA^1$ satisfying
\begin{equation}\label{Vlambda-function}
f(hb)=\lambda(b^{-1})f(h), \quad h\in H, \  b\in B_H.    
\end{equation}
For dominant characters $\lambda,\lambda'$, we have a natural surjective map
\begin{equation}\label{Vlambda-natural-map}
V_H(\lambda)\otimes V_H(\lambda')\to V_H(\lambda+\lambda') 
\end{equation}
sending $f\otimes f'$ (where $f\in V_H(\lambda)$, $f'\in V_H(\lambda')$ as \eqref{Vlambda-function}) to the function $ff'\in V_H(\lambda+\lambda')$. 
Let $W_H \colonequals W(H,T)$ be the Weyl group 
and $w_{0,H}\in W_H$ the longest element. 
Then there is a unique $B_H$-stable subline of $V_H(\lambda)$, 
which is a weight space for the lowest weight $w_{0,H}\lambda$. 

Let $H$ be a connected, reductive group over $\FF_q$ with 
a Borel pair $(B,T)$ of $H_k$ defined over $\FF_q$. 
We recall the following well-known lemma: 
\begin{lemma}\label{lemma-proj}
Let $\lambda \in X_+^*(T)$. 
Let $v_{\rm high}\in V_{H}(\lambda)$ be a nonzero element in the highest weight line, and let $p_\lambda \colon V_{H}(\lambda)\to k v_{\rm high}$ be the projection onto $k v_{\rm high}$. Then $p_\lambda \in V_{H}(\lambda)^\vee$ is a $B$-eigenvector for the weight $-\lambda$. In other words, $p_\lambda$ is a $B$-equivariant map $V_{H}(\lambda) \to \lambda$.
\end{lemma}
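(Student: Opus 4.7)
The plan is to verify $B$-equivariance directly by decomposing $V_H(\lambda)$ into $T$-weight spaces and exploiting the fact that $\lambda$ is the highest weight. Since the $\lambda$-weight space of $V_H(\lambda)$ is the one-dimensional line $kv_{\rm high}$, I write
\[
V_H(\lambda) = kv_{\rm high} \oplus V_{<\lambda}, \quad V_{<\lambda} \colonequals \bigoplus_{\nu < \lambda} V_H(\lambda)_\nu,
\]
where $\nu < \lambda$ refers to the usual partial order on $X^*(T)$ defined by the positive roots $\Phi_+$ (of $B^+$). The map $p_\lambda$ is then the projection onto the first summand.

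Because $B = T \ltimes U$ with $U = R_{\mathrm{u}}(B)$ and $\lambda$ is trivial on $U$, the claim reduces to showing that $p_\lambda(tv) = \lambda(t)\, p_\lambda(v)$ for all $t \in T$ and $p_\lambda(uv) = p_\lambda(v)$ for all $u \in U$. The first equality is immediate from the weight decomposition, since $T$ preserves every $V_H(\lambda)_\nu$ and acts on $kv_{\rm high}$ by the character $\lambda$.

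For the second equality, recall that in the paper's conventions $\Phi_+$ consists of the positive roots of the opposite Borel $B^+$, so the root subgroups $U_\alpha \subset U$ correspond precisely to $\alpha \in -\Phi_+$. From the expansion $u_\alpha(x) v_\nu = v_\nu + \sum_{j \geq 1} x^j E_\alpha^{(j)}(v_\nu)$ with $E_\alpha^{(j)}(v_\nu) \in V_H(\lambda)_{\nu + j \alpha}$, together with the fact that $\nu + j\alpha < \nu$ whenever $\alpha \in -\Phi_+$ and $j \geq 1$, each such $u_\alpha(x)$ preserves $V_{<\lambda}$ and sends $v_{\rm high}$ into $v_{\rm high} + V_{<\lambda}$. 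Since $U$ is generated by its root subgroups $U_\alpha$ with $\alpha \in -\Phi_+$, the same properties hold for every $u \in U$. Consequently, for $v = c\, v_{\rm high} + v'$ with $v' \in V_{<\lambda}$, one has $uv \in c\, v_{\rm high} + V_{<\lambda}$, whence $p_\lambda(uv) = c\, v_{\rm high} = p_\lambda(v)$. No genuine obstacle arises here; the only point requiring care is to track the paper's sign convention distinguishing $B$ from $B^+$, which is what forces $U$ to act by lowering (rather than raising) weights.
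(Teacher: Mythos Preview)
Your proof is correct and follows essentially the same approach as the paper. The paper's own proof simply cites an external lemma and notes that $B$ is generated by $T$ and $(U_{-\alpha})_{\alpha \in \Phi_+}$; you have unpacked precisely this argument, verifying the equivariance on $T$ and on each negative root subgroup directly via the weight decomposition and the operators $E_\alpha^{(j)}$, and then using the generation statement to conclude.
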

\begin{proof}
This follows from \cite[Lemma 3.3.1]{Imai-Koskivirta-vector-bundles}, as
$B$ is generated by $T$ and $(U_{-\alpha})_{\alpha \in \Phi_+}$. 
\end{proof}

We put 
\[
 X_1^*(T)=\{ \lambda \in X^*(T) \mid \textrm{$0 \leq \langle \lambda, \alpha^\vee \rangle<q$ for all $\alpha \in \Delta$} \} . 
\]
Steinberg's tensor product theorem (\cf \cite{steinberg-reps-alg-gps-nagoya}) is usually stated in the split case. 
We give a statement of the theorem in the general case. 

\begin{theorem}\label{thm-L-decomp}
Let $\lambda =\sum_{i=0}^m q^i \sigma^{-i} (\lambda_i)$ with $\lambda_0, \lambda_1, \ldots, \lambda_m \in X_1^* (T)$. 
Then we have 
\begin{equation}
 L(\lambda) \simeq L(\lambda_0) \otimes L(\lambda_1^{[1]}) \otimes \cdots \otimes L(\lambda_m^{[m]}).  
\end{equation}
\end{theorem}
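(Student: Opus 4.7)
The plan is to deduce this general statement by reducing to the classical (split) Steinberg tensor product theorem applied to $G_k$, which is split over the algebraically closed field $k$. The bridge between the $q$-adic decomposition in the statement and the classical $p$-adic one is supplied by the relation $q=p^f$ together with the identity
\[
\varphi^{*}\lambda \;=\; q\,\sigma^{-1}(\lambda) \qquad \text{on } X^{*}(T_k),
\]
which encodes that the absolute $q$-power Frobenius of $G_k$ factors as $\varphi$ composed with the Galois action coming from the $\FF_q$-structure on $G$ and $T$.

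First I would check that each $L(\lambda_i)^{[i]}$ is irreducible with highest weight $q^i\sigma^{-i}(\lambda_i)$, so that
\[
L(\lambda_i)^{[i]} \;\cong\; L\bigl(q^i\sigma^{-i}(\lambda_i)\bigr) \quad \text{in } \Rep(G_k).
\]
Irreducibility follows from the fact that $\varphi^i\colon G_k\to G_k$ is surjective on $k$-points: any $\rho\circ\varphi^i$-stable subspace is stable under $\rho(G_k(k))$ and hence is a subrepresentation of $\rho$ itself. The computation of the highest weight is then immediate from the displayed identity applied to a highest weight vector of $L(\lambda_i)$. Note that $q^i\sigma^{-i}$ preserves dominance because $\sigma$ permutes $\Delta$.

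Next I would refine each $\sigma^{-i}(\lambda_i)\in X_1^{*}(T)$ (using that $X_1^{*}(T)$ is Galois-stable, since $\sigma$ permutes $\Delta$ and $\Delta^\vee$) via its $p$-adic decomposition
\[
\sigma^{-i}(\lambda_i) \;=\; \sum_{j=0}^{f-1} p^{j}\,\mu_{i,j}, \qquad 0\le \langle \mu_{i,j},\alpha^\vee\rangle < p \text{ for all } \alpha\in\Delta.
\]
Substituting into the statement gives the $p$-adic expansion
\[
\lambda \;=\; \sum_{i=0}^{m} q^{i}\,\sigma^{-i}(\lambda_i) \;=\; \sum_{i,j} p^{\,if+j}\,\mu_{i,j},
\]
whose digits all lie in the classical split-Steinberg range.

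Finally I would invoke the classical split Steinberg theorem twice: once on $\lambda$ (with the $p$-adic expansion above) and once on each $q^i\sigma^{-i}(\lambda_i) = \sum_j p^{\,if+j}\mu_{i,j}$. This yields
\[
L(\lambda) \;\cong\; \bigotimes_{i,j} L(\mu_{i,j})^{[F^{if+j}]} \;\cong\; \bigotimes_{i=0}^{m} L\bigl(q^{i}\sigma^{-i}(\lambda_i)\bigr) \;\cong\; \bigotimes_{i=0}^{m} L(\lambda_i)^{[i]},
\]
where $F$ denotes the absolute $p$-power Frobenius on $G_k$. The main (though minor) obstacle is the bookkeeping needed to align the tensor decompositions under the identity $\varphi^{*}\lambda = q\,\sigma^{-1}(\lambda)$; once this Frobenius/Galois dictionary is in place, the proof is a clean reduction to the split case.
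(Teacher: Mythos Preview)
Your proof is correct and takes essentially the same route as the paper: reduce to the split Steinberg tensor product theorem and then identify $L(\lambda_i)^{[i]}\cong L(q^i\sigma^{-i}\lambda_i)$ via $\varphi^{*}=q\sigma^{-1}$ on $X^*(T)$. The paper avoids your intermediate $p$-adic refinement by applying \cite[II,~3.17]{jantzen-representations} directly at the $q$-level for a chosen split $\FF_q$-form of $G_k$, obtaining $L(\lambda)\cong\bigotimes_i L(q^i\sigma^{-i}\lambda_i)$ in one step; your detour through the $\mu_{i,j}$ is harmless but unnecessary.
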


\begin{proof}
We take a split form $T' \subset B' \subset H'$ over $\FF_q$ of $T \subset B \subset H_k$. Put $\lambda'_i=\sigma^{-i}\lambda_i$. By \cite[II, 3.17]{jantzen-representations}, we have $L(\lambda) \simeq L(\lambda'_0) \otimes L(q \lambda'_1) \otimes \cdots \otimes L(q^m \lambda'_m)$. Since
$L(\lambda_i)^{[i]}=L(q^i \lambda'_i)$ for $0 \leq i \leq m$ the claim follows. 
\end{proof}

\subsection{\texorpdfstring{Vector bundles on the stack of $G$-zips}{}} \label{sec-vector-bundles-gzipz}

\subsubsection{General theory} \label{sec-genth-vb}
For an algebraic stack $\Xcal$, we write $\VB(\Xcal)$ for the category of vector bundles on $\Xcal$. Let $X$ be a $k$-scheme and $H$ an affine $k$-group scheme acting on $X$. 
We have a functor
\[
\Vcal_{H,X} \colon \Rep(H)\to \VB([H\backslash X]).
\]
sending 
$(V,\rho)$ to the vector bundle defined geometrically as $[H\backslash (X\times_k V)]$, where $H$ acts diagonally on $X\times_k V$. 
For $(V,\rho) \in \Rep(H)$, we have a natural identification 
\begin{equation}\label{globquot}
 H^0([H\backslash X],\Vcal_{H,X}(\rho))
 =\left\{f \colon X\to V \mid  f(h \cdot x)=\rho(h) f(x) \  \textrm{for} \  h\in H, \ x\in X\right\}.
\end{equation}

\subsubsection{\texorpdfstring{Vector bundles on $\GZip^\mu$}{}}\label{sec-VBGzip}
Fix a cocharacter datum $(G,\mu)$. Let $\Zcal=(G,P,L,Q,M,\varphi)$ be the attached zip datum. Fix a frame $(B,T)$ as in \S\ref{sec-frames}. By \S\ref{sec-genth-vb}, we obtain a functor $\Vcal_{E,G}\colon \Rep(E)\to \VB(\GZip^\mu)$, that we simply denote by $\Vcal$. For $(V,\rho)\in \Rep(E)$, the global sections of $\Vcal(\rho)$ are
\[
 H^0(\GZip^\mu,\Vcal(\rho))=\left\{f \colon G_k \to V \mid f(\epsilon \cdot g)=\rho(\epsilon) f(g) \  \textrm{for} \  \epsilon\in E, \  g\in G_k \right\}.
\]
Since $G$ admits an open dense $E$-orbit $U_\mu$ (see discussion below Theorem \ref{thm-E-orb-param}), the space $H^0(\GZip^\mu,\Vcal(\rho))$ is finite-dimensional by \cite[Lemma 1.2.1]{Koskivirta-automforms-GZip} (see also \eqref{injection-ev1} below). The first projection $p_1 \colon E\to P$ induces a functor $p_1^* \colon \Rep(P)\to \Rep(E)$. If $(V,\rho)\in \Rep(P)$, we write again $\Vcal(\rho)$ for $\Vcal(p_1^*(\rho))$. In this paper, we will only consider $E$-representations coming from $P$ in this way.

\subsection{\texorpdfstring{Global sections over $\GZip^\mu$}{}}\label{subsec-global-sections}
We review some results of \cite{Imai-Koskivirta-vector-bundles} regarding the space of global sections of $\Vcal(\rho)$ over $\GZip^\mu$ for a $P$-representation $\rho$. We view $H^0(\GZip^\mu,\Vcal(\rho))$ as a subspace of $V$, as follows. By \eqref{globquot}, such a global section is a map $h\colon G\to V$ satisfying $h(agb^{-1})=\rho(a)h(g)$ for all $(a,b)\in E$ and all $g\in G$. Since $1$ lies in the open dense $E$-orbit $U_\mu \subset G$ (see paragraph after Theorem \ref{thm-E-orb-param}), the map $h\mapsto h(1)$ is an injection
\begin{equation}\label{injection-ev1}
\ev_1 \colon H^0(\GZip^\mu,\Vcal(\rho))\to V.
\end{equation}

We describe the image of the map $\ev_1$. First, we recall the space of sections over the open subset $\Ucal_\mu\subset \GZip^\mu$. Recall that $\Ucal_\mu=[E\backslash U_\mu]$ and $1\in U_\mu$ (see \S\ref{subsec-zipstrata}). In particular, we can extend \eqref{injection-ev1} to an injection $\ev_1\colon H^0(\Ucal_\mu,\Vcal(\rho))\to V$. The scheme-theoretical stabilizer of $1$ in $E$ is given by
\begin{equation}\label{Lphi-equ}
L_{\varphi}=E\cap \{(x,x) \mid x\in G_k \}, 
\end{equation}
which is a $0$-dimensional algebraic group (in general non-smooth). The first projection $E\to P$ induces a closed immersion $L_{\varphi}\to P$. Hence we identify $L_\varphi$ with its image and view it as a subgroup of $P$. Denote by $L_0\subset L$ the largest algebraic subgroup defined over $\FF_q$. 
In other words,
\begin{equation}\label{eqL0}
    L_0=\bigcap_{n\geq 0}L^{(q^n)}.
\end{equation}

\begin{lemma}[{\cite[Lemma 3.2.1]{Koskivirta-Wedhorn-Hasse}}]\label{lemLphi} \ 
\begin{assertionlist}
\item \label{lemLphi-item1} One has $L_{\varphi}\subset L$.
\item \label{lemLphi-item2}  The group $L_{\varphi}$ can be written as a semidirect product $L_{\varphi}=L_{\varphi}^\circ\rtimes L_0(\FF_q)$ where $L_{\varphi}^\circ$ is the identity component of $L_{\varphi}$. Furthermore, $L_{\varphi}^\circ$ is a finite unipotent algebraic group.
\item \label{lemLphi-item3}  Assume that $P$ is defined over $\FF_q$. Then $L_0=L$ and $L_{\varphi}=L(\FF_q)$, viewed as a constant algebraic group.
\end{assertionlist}
\end{lemma}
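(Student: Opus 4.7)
I analyse $L_{\varphi}$ as the scheme-theoretic stabilizer of $1 \in G_k$ under the $E$-action $(a,b)\cdot g = agb^{-1}$, exploiting the fact that the $E$-orbit of $1$ is the open dense subset $U_{\mu}$ of full dimension $\dim G_k$, so $L_{\varphi}$ is automatically zero-dimensional.

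For (i): an element $(x,x) \in L_{\varphi}$ satisfies $x \in P \cap Q$ and $\varphi(\theta_L^P(x)) = \theta_M^Q(x)$. Set $a = \theta_L^P(x) \in L$ and write $x = au$ with $u \in R_{\mathrm{u}}(P)$; the zip condition yields $x = \varphi(a) v$ for some $v \in R_{\mathrm{u}}(Q)$, hence the identity $a^{-1}\varphi(a) = u v^{-1}$ in $G_k$. The right-hand side lies in the subvariety $R_{\mathrm{u}}(P) \cdot R_{\mathrm{u}}(Q)$, the left in $L \cdot M$. Expanding $R_{\mathrm{u}}(P) = \prod_{\langle \alpha,\mu\rangle <0} U_{\alpha}$ and $R_{\mathrm{u}}(Q) = \prod_{\langle \beta,\sigma(\mu)\rangle >0} U_{\beta}$, I compare root components; combined with $\dim L_{\varphi} = 0$, this forces $u = v = 1$, so $x = a \in L$.

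For (ii): once $L_{\varphi} \subset L$, the condition on $(a,a) \in L_{\varphi}$ becomes $\varphi(a) = a$ with $a \in L$. Since $\varphi(a) \in M = L^{(q)}$, iterating $\varphi$ forces $a \in \bigcap_{n\geq 0} L^{(q^n)} = L_0$. The group $L_0$ is connected reductive and defined over $\FF_q$, and $\varphi|_{L_0}$ is the $q$-Frobenius endomorphism; by Lang--Steinberg, its reduced fixed-point subscheme is the constant étale group $L_0(\FF_q)$. Scheme-theoretically however $L_{\varphi}$ may carry nontrivial infinitesimal structure: a tangent space calculation shows that $T_1 L_{\varphi} = \Lie(L) \cap \Lie(R_{\mathrm{u}}(Q))$, the sum of root spaces $\mathfrak{g}_{\alpha}$ for roots $\alpha$ of $L$ with $\langle \alpha, \sigma(\mu)\rangle > 0$. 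The connected component $L_{\varphi}^\circ$ is the finite unipotent group scheme absorbing this infinitesimal direction; the étale subgroup $L_0(\FF_q) \subset L_{\varphi}$ normalises $L_{\varphi}^\circ$ and provides a natural section of $L_{\varphi} \twoheadrightarrow L_{\varphi}/L_{\varphi}^\circ = L_0(\FF_q)$, giving the semidirect product decomposition.

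For (iii): if $P$ is $\FF_q$-defined, its Levi $L \supset T$ is also $\FF_q$-defined, whence $L = L^{(q)} = M$, $L_0 = L$, and $Q = P_+(\mu)^{(q)} = P_+(\mu)$ is the parabolic opposite to $P$ with Levi $L$; in particular $\Lie(L) \cap \Lie(R_{\mathrm{u}}(Q)) = 0$, so the tangent calculation in (ii) yields $L_{\varphi}^\circ = 1$ and (ii) degenerates to $L_{\varphi} = L(\FF_q)$ as a constant étale group. The main obstacle is step (i): the transversality of $L \cdot M$ and $R_{\mathrm{u}}(P) \cdot R_{\mathrm{u}}(Q)$ at $1$ in $G_k$ cannot be read off from root-space supports alone when $\sigma(\mu) \ne \mu$, and must be combined carefully with the dimensional constraint $\dim L_{\varphi} = 0$ coming from the openness of $U_{\mu}$.
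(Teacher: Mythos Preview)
The paper does not prove this lemma; it is quoted from \cite[Lemma~3.2.1]{Koskivirta-Wedhorn-Hasse} without argument. So there is no in-paper proof to compare against, and I assess your sketch on its own merits.

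Your treatment of (iii) is correct: when $P$ is defined over $\FF_q$ one has $M=L$ and $Q$ is the parabolic opposite to $P$, so the big-cell factorisation $R_{\mathrm{u}}(P)\times L\times R_{\mathrm{u}}(Q)\hookrightarrow G_k$ forces $u=v=1$ and $l=\varphi(l)$ in your equation $l^{-1}\varphi(l)=uv^{-1}$, whence $L_\varphi=L(\FF_q)$.

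For (i) in the general case you correctly identify the obstacle but do not overcome it. The constraint $\dim L_\varphi=0$ only says that $L_\varphi$ is a finite group scheme; it does not prevent $L_\varphi$ from having a nontrivial (finite, possibly infinitesimal) image in $R_{\mathrm{u}}(P)$ under $x\mapsto u$. Root-support and dimension considerations alone do not close this, and you offer no further mechanism. The argument in the cited reference proceeds differently.

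For (ii) there is an incorrect step: for $a\in L$, the condition $(a,a)\in E$ is \emph{not} equivalent to $\varphi(a)=a$. Since $\theta_L^P(a)=a$, the zip condition reads $\theta_M^Q(a)=\varphi(a)$, i.e.\ $\varphi(a)^{-1}a\in R_{\mathrm{u}}(Q)$, which when $L\neq M$ is strictly weaker than $\varphi(a)=a$. Your iteration to $L_0$ and your invocation of Lang--Steinberg therefore rest on an unestablished premise, and the semidirect-product and unipotence assertions for $L_\varphi^\circ$ remain heuristic: the tangent-space identification you write down already presupposes (i), and even then needs to be checked against the differential of the orbit map $E\to G_k$ (which gives $\Lie(P)\cap\Lie(R_{\mathrm{u}}(Q))$, not a priori $\Lie(L)\cap\Lie(R_{\mathrm{u}}(Q))$).
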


\begin{lemma}[{\cite[Corollary 3.2.3]{Imai-Koskivirta-vector-bundles}}] \label{lem-Umu-sections}
One has an identification
\begin{equation}
 H^0(\Ucal_\mu,\Vcal(\rho))=V^{L_\varphi}.   
\end{equation}
\end{lemma}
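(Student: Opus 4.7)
The plan is to exploit the fact that $U_\mu \subset G$ is a single $E$-orbit containing the identity element $1 \in G$, whose scheme-theoretic stabilizer is precisely $L_\varphi$ by \eqref{Lphi-equ}. Consequently the orbit map $E \to U_\mu$, $(a,b) \mapsto ab^{-1}$ identifies $U_\mu$ with the fppf quotient $E/L_\varphi$ as $E$-schemes. Passing to stacks, $\Ucal_\mu = [E\backslash U_\mu] \simeq [E\backslash (E/L_\varphi)] \simeq BL_\varphi$, and under this identification the restriction of $\Vcal(\rho)$ to $\Ucal_\mu$ corresponds to the $L_\varphi$-representation obtained by restricting $\rho$ along the closed immersion $L_\varphi \hookrightarrow P$. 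The space of global sections of a vector bundle on $BL_\varphi$ attached to a representation is the space of invariants, giving the claim.

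Concretely, I would spell this out using the description \eqref{globquot}. In the forward direction, take $f \in H^0(\Ucal_\mu,\Vcal(\rho))$; viewing $f$ as an $E$-equivariant morphism $U_\mu \to V$, for any $(x,x) \in L_\varphi$ we have
\[
f(1) = f((x,x)\cdot 1) = \rho(x) f(1),
\]
so $\ev_1(f) = f(1) \in V^{L_\varphi}$. In the opposite direction, given $v \in V^{L_\varphi}$, define $\tilde f \colon E \to V$ by $\tilde f(a,b) \colonequals \rho(a) v$. Since $v$ is $L_\varphi$-invariant and $L_\varphi \subset L$ via the first projection, $\tilde f$ is constant on the fibers of the orbit map $E \to U_\mu$ (two preimages of the same point differ by right multiplication by an element of $L_\varphi$), and therefore descends to a morphism $f \colon U_\mu \to V$. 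A direct verification using that $E$ is a group shows that $f$ is $E$-equivariant: for $(c,d) \in E$ and $g = ab^{-1} \in U_\mu$, one has $(c,d)\cdot g = (ca)(db)^{-1}$ with $(ca,db) \in E$, whence $f((c,d)\cdot g) = \rho(ca)v = \rho(c) f(g)$. The two constructions are manifestly inverse to each other.

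The main technical point to justify carefully is the existence and formation of the quotient $E/L_\varphi$ and its identification with $U_\mu$. This relies on standard facts about group actions: the orbit of $1$ is locally closed in $G$, and the induced map $E/L_\varphi \to U_\mu$ is a morphism of homogeneous spaces that is both injective and surjective on points with the expected differential, hence an isomorphism. One subtlety is that $L_\varphi$ may fail to be reduced (see Lemma \ref{lemLphi}\ref{lemLphi-item2}), so the equality $V^{L_\varphi}$ must be interpreted scheme-theoretically as the equalizer of the two maps $V \rightrightarrows V \otimes \Ocal(L_\varphi)$ coming from the coaction and the constant map; the argument above goes through verbatim at this level since we can test on $R$-points for every $k$-algebra $R$.
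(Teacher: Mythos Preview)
Your proof is correct and follows the standard approach: identify $\Ucal_\mu$ with the classifying stack $BL_\varphi$ via the orbit isomorphism $E/L_\varphi \simeq U_\mu$, so that sections of $\Vcal(\rho)$ over $\Ucal_\mu$ are precisely the $L_\varphi$-invariants. The paper does not give its own proof here but cites \cite[Corollary 3.2.3]{Imai-Koskivirta-vector-bundles}, where exactly this argument is carried out; your concrete verification via \eqref{globquot} and the careful remark about scheme-theoretic invariants (needed because $L_\varphi$ may be non-reduced) match that treatment.
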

More precisely, we mean that the image of $\ev_1\colon H^0(\Ucal_\mu,\Vcal(\rho))\to V$ coincides with $V^{L_\varphi}$. Here, the notation $V^{L_{\varphi}}$ denotes the space of scheme-theoretical invariants, i.e.\ the set of $v\in V$ such that for any $k$-algebra $R$, one has $\rho(x)v=v$ in $V\otimes_k R$ for all $x\in L_{\varphi}(R)$. We now move on to the space of global sections over $\GZip^\mu$. Note that restriction of sections via the open dense inclusion $\Ucal_\mu\subset \GZip^\mu$ induces an injective map $H^0(\GZip^{\mu},\Vcal(\rho))\to H^0(\Ucal_\mu,\Vcal(\rho))=V^{L_\varphi}$. 
We only explain a partial result, for the general result see \cite[Theorem 3.4.1]{Imai-Koskivirta-vector-bundles}. 
The Lang torsor map $\wp \colon T \to T$, $g\mapsto g\varphi(g)^{-1}$ induces an isomorphism:
\begin{equation}\label{equ-Pupstar}
\wp_* \colon X_*(T)_{\RR} \stackrel{\sim}{\longrightarrow} X_*(T)_{\RR};\ \delta \mapsto \wp \circ \delta  = \delta -q\sigma(\delta). 
\end{equation}
Set $\delta_{\alpha}\colonequals \wp_*^{-1}(\alpha^\vee)$. Define a subspace $V_{\geq 0}^{\Delta^P}\subset V$ as follows:

\begin{equation}\label{equ-VDeltaP}
V_{\geq 0}^{\Delta^P} = \bigoplus_{\substack{\langle \nu,\delta_{\alpha} \rangle \geq 0, \ 
 \forall \alpha\in \Delta^P}} V_\nu.
\end{equation}
If $T$ is split over $\FF_q$, then 
$\delta_{\alpha}=-\alpha^\vee /(q-1)$, 
and $V_{\geq 0}^{\Delta^P}$ is the direct sum of the weight spaces $V_\nu$ for those $\nu\in X^*(T)$ satisfying $\langle \nu,\alpha^\vee \rangle \leq 0$ for all $\alpha \in \Delta^P$. 

\begin{proposition}\label{cor-Fq-Levi}
Assume that $P$ is defined over $\FF_q$ and furthermore that $(V,\rho)\in \Rep(P)$ is trivial on the unipotent radical $R_{\mathrm{u}}(P)$. Then one has an equality
\begin{equation}
H^0(\GZip^\mu,\Vcal(\rho))=V^{L(\FF_q)}\cap V_{\geq 0}^{\Delta^P} .
\end{equation}
\end{proposition}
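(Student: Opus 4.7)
The plan is to pass from the open stratum $\Ucal_\mu$, where global sections are already described by Lemma \ref{lem-Umu-sections}, to the entire stack $\GZip^\mu$ by imposing one codimension-one regularity condition for each $\alpha\in\Delta^P$. Since $P$ is defined over $\FF_q$, so is the Levi $L$, hence $L_0=L$ and Lemma \ref{lemLphi}\ref{lemLphi-item3} gives $L_{\varphi}=L(\FF_q)$ as a constant group. Combined with Lemma \ref{lem-Umu-sections}, restriction along the open immersion $\Ucal_\mu\subset\GZip^\mu$ yields an injection
\[
H^0(\GZip^\mu,\Vcal(\rho))\hookrightarrow H^0(\Ucal_\mu,\Vcal(\rho))=V^{L(\FF_q)}.
\]
The proposition amounts to identifying the image of this injection with $V^{L(\FF_q)}\cap V_{\geq 0}^{\Delta^P}$.

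I would next reduce the extension problem to codimension one. By Theorem \ref{thm-E-orb-param}, the complement $\GZip^\mu\setminus\Ucal_\mu$ is the union of the strictly smaller strata $\Xcal_w$, and its codimension-one components correspond bijectively to the elements $w\in {}^IW$ immediately below $w_{0,I}w_0$ in $\preccurlyeq$, which are parametrized by $\alpha\in\Delta^P$: call the associated stratum $\Xcal_\alpha$. Since $\GZip^\mu$ is smooth, Hartogs' lemma reduces extension of a section from $\Ucal_\mu$ to $\GZip^\mu$ to extension across the generic point of each $\Xcal_\alpha$.

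Given $f\in V^{L(\FF_q)}$, the corresponding section on $\Ucal_\mu$ is the unique $E$-equivariant map $h_f\colon U_\mu\to V$ with $h_f(1)=f$; because $\rho$ factors through $L$, one has $h_f(a\varphi(a)^{-1})=\rho(a)f$ for $a\in L$. To probe regularity of $h_f$ along $\Xcal_\alpha$, I would use the cocharacter $\delta_\alpha=\wp_*^{-1}(\alpha^\vee)\in X_*(T)_\RR$ from \eqref{equ-Pupstar}: after clearing a denominator $N$, $c_\alpha\colonequals N\delta_\alpha\in X_*(T)$ satisfies $\wp(c_\alpha(t))=\alpha^\vee(t^N)$, a curve in $T\subset U_\mu$ approaching a generic point of $\Xcal_\alpha$ as $t\to 0$. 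Along this curve, the weight decomposition $f=\sum_{\nu}f_\nu$ yields
\[
h_f\bigl(\wp(c_\alpha(t))\bigr)=\rho(c_\alpha(t))f=\sum_\nu t^{N\langle\nu,\delta_\alpha\rangle}f_\nu,
\]
which extends regularly at $t=0$ exactly when $\langle\nu,\delta_\alpha\rangle\geq 0$ for every weight $\nu$ with $f_\nu\neq 0$. Intersecting over $\alpha\in\Delta^P$ produces the condition $f\in V_{\geq 0}^{\Delta^P}$, giving the inclusion $\subseteq$.

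The main obstacle is the converse direction: one has to justify that this one-dimensional cocharacter test is \emph{sufficient} to guarantee regular extension across the whole codimension-one stratum $\Xcal_\alpha$, not merely at a single point on it. This requires checking that the curve $t\mapsto\wp(c_\alpha(t))$ meets $\overline{\Xcal}_\alpha$ transversally at its generic point, so that the pole order along $\Xcal_\alpha$ is read off by the pole order in $t$; this uses the local description of the $E$-orbit structure from Theorem \ref{thm-E-orb-param}. Once this transversality is established, Hartogs' lemma and the $L(\FF_q)$-invariance of $f$ together upgrade the pointwise regularity to a global section of $\Vcal(\rho)$ on $\GZip^\mu$, yielding the reverse inclusion $\supseteq$ and hence the claimed equality.
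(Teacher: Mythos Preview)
The paper does not prove this proposition directly; it cites \cite[Theorem 3.4.1]{Imai-Koskivirta-vector-bundles}. Your overall architecture---identify sections over $\Ucal_\mu$ with $V^{L(\FF_q)}$, then impose one codimension-one extension condition per $\alpha\in\Delta^P$ and invoke Hartogs---matches that approach. The genuine gap is your choice of test curve.

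You assert that $t\mapsto\wp(c_\alpha(t))=\alpha^\vee(t^N)$ lies in $T\subset U_\mu$ and ``approaches a generic point of $\Xcal_\alpha$ as $t\to 0$.'' But $\alpha^\vee$ is a nontrivial cocharacter $\GG_{\mathrm{m}}\to T$, so $\alpha^\vee(t^N)$ does not converge in $G$ at all as $t\to 0$; for instance in $\GL_2$ one has $\alpha^\vee(t)=\diag(t,t^{-1})$. Hence regularity of an extension of $h_f$ to $G$ places no constraint whatsoever on the Laurent expansion $\sum_\nu t^{N\langle\nu,\delta_\alpha\rangle}f_\nu$ at $t=0$, and neither inclusion follows from your curve---so even the direction $\subseteq$ that you regard as settled is unjustified. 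The curves that actually do the job are built from root-subgroup maps, of the type $t\mapsto\phi_\alpha(A(t))$ with $A(t)$ as in \eqref{At}: one has $\phi_\alpha(A(t))\in BB^+$ for $t\neq 0$ and $\phi_\alpha(A(0))=s_\alpha$, so after a suitable translation such a curve genuinely limits from the open $E$-orbit into a codimension-one $E$-orbit while remaining in $G$ throughout. Expanding $h_f$ along such curves (compare \S\ref{sec-glob-sec-Sbt} and the proof of Theorem~\ref{ver-thmG}, which invokes \cite[Remark 5.1.3]{Imai-Koskivirta-vector-bundles}) yields extension conditions involving the operators $E_{-\alpha}^{(j)}$. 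Your hypothesis that $\rho$ is trivial on $R_{\mathrm{u}}(P)$ forces $U_{-\alpha}\subset R_{\mathrm{u}}(P)$ to act trivially for $\alpha\in\Delta^P$, whence $E_{-\alpha}^{(j)}=0$ for $j>0$; only after this simplification does the general criterion collapse to the pure weight inequality defining $V_{\geq 0}^{\Delta^P}$. The cocharacter $\delta_\alpha$ emerges in that computation from the interaction of the Frobenius twist in $E$ with the $\alpha^\vee(t)$-factor of the decomposition \eqref{At}, not from using $\delta_\alpha$ itself as a direction of approach.
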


\subsection{\texorpdfstring{Category of $L$-vector bundles on $\GZip^\mu$}{}}\label{sec-Lreps}

Let $\theta^P_L \colon P\to L$ denote the natural projection modulo the unipotent radical $R_{\mathrm{u}}(P)$ as  \S\ref{subsec-zipdatum}. It induces a fully faithful functor 
\begin{equation}
(\theta^P_L)^* \colon \Rep(L)\to \Rep(P)  
\end{equation}
by composition, 
and its image is the full subcategory of $\Rep(P)$ of $P$-representations which are trivial on $R_{\mathrm{u}}(P)$. 
By this functor, we view $\Rep(L)$ as a full subcategory of $\Rep(P)$. If $(V,\rho)\in \Rep(L)$, we write again $\Vcal(\rho) \colonequals \Vcal((\theta^P_L)^*\rho)$. 
For $\lambda\in X^*(T)$, we write $B_L \colonequals B\cap L$ and define an $L$-representation $(V_I(\lambda),\rho_{I,\lambda})$ by
\begin{equation}\label{equ-VlambdaL}
V_I(\lambda)=\Ind_{B_L}^L (\lambda), \quad \rho_{I,\lambda}\colon L\to \GL(V_I(\lambda)). 
\end{equation}
We note that if $\lambda\in X^*(T)$ is not $I$-dominant, then we have  $V_I(\lambda)=0$. 
Let $\Vcal_I(\lambda)$ denote the vector bundle on $\GZip^\mu$ attached to $V_I(\lambda)$. We call it an automorphic vector bundle on $\GZip^\mu$ associated to $\lambda$. This terminology stems from its relation to Shimura varieties (\cf \S\ref{subsec-Shimura}). For $\lambda \in X^*(L)$, viewing it as an element of $X^*(T)$ by restriction to $T$, the vector bundle $\Vcal_I(\lambda)$ is a line bundle. 

Denote by $\VB_L(\GZip^\mu)$ the essential image of the functor $\Vcal\colon \Rep(L)\to \VB(\GZip^\mu)$. 
An object of $\VB_L(\GZip^\mu)$ is called an $L$-vector bundle. 
Assume that $P$ is defined over $\FF_q$, hence $L_{\varphi}=L(\FF_q)$. We describe the category $\VB_L(\GZip^\mu)$. Define the category $L_{\varphi}\mathchar`-\mathrm{MF}_{\Delta^P}$ of $\Delta^P$-filtered $L_{\varphi}$-modules over $k$ as follows. Its objects are pairs $((V,\tau),\Fcal)$ where $\tau \colon L_{\varphi} \to \GL_k(V)$ is a finite-dimensional representation of $L_{\varphi}$ and $\Fcal \colonequals \{V_{\geq \bullet}^\alpha\}_{\alpha\in \Delta^P}$ is a set of filtrations on $V$, one for each $\alpha \in \Delta^P$. Here, $V_{\geq \bullet}^\alpha$ denotes a descending filtration $(V_{\geq r}^\alpha)_{r\in \RR}$. Morphisms between two $\Delta^P$-filtered $L_{\varphi}$-modules $((V,\tau),\Fcal)$ and $((V',\tau'),\Fcal')$ over $k$ are $k$-linear, $L_{\varphi}$-equivariant maps $f \colon V\to V'$ which map $V_{\geq r}^\alpha$ to $V'^{\alpha}_{\geq r}$ for all $r\in \RR$ and all $\alpha\in \Delta^P$. 

If $(V,\rho)$ is an $L$-representation, we can attach to it a $\Delta^P$-filtered $L_{\varphi}$-module $F_{\mathrm{MF}}(V,\rho)$ as follows. First, define $\tau$ as the restriction of $\rho$ to $L_{\varphi}$. Then, for $\alpha\in \Delta^P$, define the filtration $V_{\geq \bullet}^\alpha$ as follows. Let $V \colonequals \bigoplus_{\nu}V_\nu$ be the $T$-weight decomposition of $V$. For $\alpha\in \Delta^P$ and $r\in \RR$, let $V^{\alpha}_{\geq r}$ be the direct sum of $V_\nu$ for all $\nu$ satisfying $\langle \nu,\delta_{\alpha} \rangle \geq r$. This construction gives rise to a functor
 \begin{equation}\label{equ-functor-modfil}
 F_{\mathrm{MF}} \colon \Rep(L)\to L_{\varphi}\mathchar`-\mathrm{MF}_{\Delta^P}, 
 \end{equation}
and we say that a $\Delta^P$-filtered $L_{\varphi}$-module is \emph{admissible} if it is in the essential image of $F_{\mathrm{MF}}$. 
Let $L_{\varphi}\mathchar`-\mathrm{MF}_{\Delta^P}^{\mathrm{adm}}$ 
denote the full subcategory of 
$L_{\varphi}\mathchar`-\mathrm{MF}_{\Delta^P}$ 
consisting of the admissible $\Delta^P$-filtered $L_{\varphi}$-modules. 

\begin{theorem}[{\cite[Theorem 5.2.3]{Imai-Koskivirta-vector-bundles}}]\label{fullfaith}
Assume that $P$ is defined over $\FF_q$. The functor $\Vcal\colon \Rep(L)\to \VB_L(\GZip^\mu)$ factors through the functor $F_{\mathrm{MF}} \colon \Rep(L)\to L_{\varphi}\mathchar`-\mathrm{MF}_{\Delta^P}^{\mathrm{adm}}$ and induces an equivalence of categories
\begin{equation}
 L_{\varphi}\mathchar`-\mathrm{MF}_{\Delta^P}^{\mathrm{adm}} \longrightarrow \VB_L(\GZip^\mu).
\end{equation}
\end{theorem}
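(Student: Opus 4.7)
The plan is to split the theorem into three ingredients: well-definedness of $F_{\mathrm{MF}}$ as a functor landing in $L_{\varphi}\mathchar`-\mathrm{MF}_{\Delta^P}^{\mathrm{adm}}$, a Hom computation identifying morphisms of $L$-vector bundles with morphisms of admissible filtered modules, and finally packaging this into a functor $\widetilde{\Vcal}$ which is essentially surjective and fully faithful. The first and third pieces are formal; the second is the technical heart and is where I expect all the real content to lie.

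For the first piece, any morphism $f\colon \rho\to\rho'$ in $\Rep(L)$ is $T$-equivariant, hence has $T$-weight zero, so it automatically sends $V_{\geq r}^{\alpha}$ into $(V')_{\geq r}^{\alpha}$ for every $\alpha\in\Delta^P$ and every $r\in\RR$; together with the obvious $L_{\varphi}$-equivariance, this shows $F_{\mathrm{MF}}$ is well-defined into $L_{\varphi}\mathchar`-\mathrm{MF}_{\Delta^P}$, with image in the admissible subcategory by the very definition of admissibility.

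For the key step, given $\rho,\rho'\in\Rep(L)$ with underlying spaces $V,V'$, I would identify
\[
\Hom_{\VB(\GZip^{\mu})}(\Vcal(\rho),\Vcal(\rho'))=H^0(\GZip^{\mu},\Vcal(\Hom_k(V,V'))),
\]
viewing $\Hom_k(V,V')$ as an $L$-representation (hence a $P$-representation via $\theta^P_L$, trivial on $R_{\mathrm{u}}(P)$). Proposition \ref{cor-Fq-Levi} then identifies the right-hand side with $\Hom_k(V,V')^{L(\FF_q)}\cap \Hom_k(V,V')_{\geq 0}^{\Delta^P}$. The decisive computation, and the main obstacle, is to match the condition $f\in \Hom_k(V,V')_{\geq 0}^{\Delta^P}$ with preservation of each filtration $V_{\geq\bullet}^{\alpha}$. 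Decomposing $f$ into $T$-weight components $f_{\mu}$ with $f_{\mu}(V_{\nu})\subset V'_{\nu+\mu}$, one checks that $f_{\mu}$ preserves the $\alpha$-filtration iff $\langle\mu,\delta_{\alpha}\rangle\geq 0$, which is exactly the defining condition of $\Hom_k(V,V')_{\geq 0}^{\Delta^P}$. This produces a canonical bijection with $\Hom_{L_{\varphi}\mathchar`-\mathrm{MF}_{\Delta^P}}(F_{\mathrm{MF}}(\rho),F_{\mathrm{MF}}(\rho'))$.

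The last step is to assemble $\widetilde{\Vcal}$. For an admissible module $M$, I would choose any $\rho$ with $F_{\mathrm{MF}}(\rho)=M$ and set $\widetilde{\Vcal}(M)\colonequals \Vcal(\rho)$; applying the Hom identification to the identity of $M$ supplies canonical isomorphisms between any two such choices, so $\widetilde{\Vcal}$ is well-defined on objects and makes $\Vcal=\widetilde{\Vcal}\circ F_{\mathrm{MF}}$. Essential surjectivity onto $\VB_L(\GZip^{\mu})$ is immediate from the definitions of the essential image and of admissibility, while full faithfulness is precisely the bijection of morphisms established above.
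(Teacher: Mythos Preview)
The present paper does not give a proof of this theorem: it is quoted verbatim from \cite[Theorem 5.2.3]{Imai-Koskivirta-vector-bundles}, so there is no in-paper argument to compare against. That said, your proposal is correct and is essentially the argument of the cited reference. The crucial identification
\[
\Hom_{\VB(\GZip^\mu)}(\Vcal(\rho),\Vcal(\rho'))\;=\;H^0\!\left(\GZip^\mu,\Vcal(\Hom_k(V,V'))\right)\;=\;\Hom_k(V,V')^{L(\FF_q)}\cap \Hom_k(V,V')_{\geq 0}^{\Delta^P}
\]
via Proposition~\ref{cor-Fq-Levi}, together with the weight computation showing that the $\geq 0$ condition is exactly preservation of each $\alpha$-filtration, is precisely how the result is obtained in \cite{Imai-Koskivirta-vector-bundles}. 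The only point you leave implicit is that this bijection on Hom-sets is compatible with composition (so that $\widetilde{\Vcal}$ is genuinely a functor); this follows because the identification goes through $\ev_1$ and the tensor structure of $\Vcal$, both of which are multiplicative, but it is worth stating.
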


\subsection{\texorpdfstring{Shimura varieties and $G$-zips}{}}\label{subsec-Shimura}
We explain the connection between the stack of $G$-zips and Shimura varieties. Let $\gx$ be a Shimura datum of Hodge-type (\cf \cite[2.1.1]{Deligne-Shimura-varieties}). 
In particular, $\mathbf{G}$ is a connected, reductive group over $\QQ$, and $\mathbf{X}$ is a $\mathbf{G}(\overline{\QQ})$-conjugacy class of cocharacters $\{\mu\}$ of $\mathbf{G}_{\overline{\QQ}}$. We write $\mathbf{E}=\egx$ for the reflex field of $\gx$ and $\Ocal_\mathbf{E}$ for its ring of integers. 
For an open compact subgroup $K \subset \gofaf$, we write $\shgx_{K}$ for Deligne's canonical model at level $K$ over $\mathbf{E}$ (see \cite{Deligne-Shimura-varieties}). For $K\subset \mathbf{G}(\AA_f)$ small enough, 
the canonical model $\shgx_K$ is a smooth, quasi-projective scheme over $\mathbf{E}$. 
We fix a prime number $p$ of good reduction. In particular, $\mathbf{G}_{\QQ_p}$ is unramified, so there exists a reductive $\ZZ_p$-model $\Gcal$, such that $G\colonequals \Gcal\otimes_{\ZZ_p}\mathbb{F}_p$ is connected. For any place $v$ above $p$ in $\mathbf{E}$, Kisin (\cite{Kisin-Hodge-Type-Shimura}) and Vasiu (\cite{Vasiu-Preabelian-integral-canonical-models})  constructed a smooth canonical model $\Sscr_K$ over $\Ocal_{\mathbf{E}_v}$-schemes. Write $S_K\colonequals \Sscr_K\otimes_{\Ocal_{\mathbf{E}_v}} \overline{\FF}_p$. 

For $\mu \in \{\mu\}$, let $\mathbf{P}=\mathbf{P}_-(\mu)$ be the parabolic of $\mathbf{G}_\CC$ defined as in \S\ref{subsec-cochar}. As explained in \cite[\S 2.5]{Imai-Koskivirta-vector-bundles}, we can find $\mu\in \{\mu\}$ which extends to a cocharacter of $\Gcal_{\Ocal_{\mathbf{E}_v}}$. Write again $\mu$ for its special fiber. Then $(G,\mu)$ is a cocharacter datum, and yields a zip datum $(G,P,L,Q,M,\varphi)$ (we always take $q=p$ in the context of Shimura varieties). Zhang (\cite[4.1]{Zhang-EO-Hodge}) constructed a smooth morphism
\begin{equation}\label{zeta-Shimura}
\zeta \colon S_K\to \GZip^\mu, 
\end{equation}
which is surjective by \cite[Corollary 3.5.3(1)]{Shen-Yu-Zhang-EKOR}. Let $\Pscr$ be the unique parabolic of $\Gcal_{\Ocal_{\mathbf{E}_v}}$ which extends $\mathbf{P}$. Then, we have a commutative diagram 
$$\xymatrix{
\Rep_{\overline{\ZZ}_p}(\mathscr{P}) \ar[r]^{\Vcal} \ar[d] & \VB(\Sscr_{K}) \ar[d] \\
\Rep_{\overline{\FF}_p}(P) \ar[r]^{\Vcal} & \VB(S_K).
}$$
The vector bundles of this form on $\Sscr_K$ and $S_K$ are called \emph{automorphic vector bundles} in \cite[III. Remark 2.3]{Milne-ann-arbor}. In particular, let $\lambda\in X^*(\mathbf{T})$ be an $\mathbf{L}$-dominant character and let $\mathbf{V}_{\mathbf{L}}(\lambda)=H^0(\mathbf{P}/\mathbf{B},\Lcal_\lambda)$ denote the unique irreducible representation of $\mathbf{P}$ over $\overline{\QQ}_p$ of highest weight $\lambda$. It admits a natural model over $\overline{\ZZ}_p$, namely $\mathbf{V}_{\mathbf{L}}(\lambda)_{\overline{\ZZ}_p} \colonequals H^0(\mathscr{P}/\mathscr{B},\Lcal_\lambda)$ where $\Lcal_\lambda$ is the line bundle attached to 
$\lambda$ viewed as a character of $\mathscr{T}$. 
Its reduction modulo $p$ is the $P$-representation $V_I (\lambda)=H^0(P/B,\Lcal_\lambda)$ over $k=\overline{\FF}_p$ as in \eqref{equ-VlambdaL}.

\section{\texorpdfstring{Vector bundles on the stack of $G$-zip flags}{}}

\subsection{\texorpdfstring{The stack of $G$-zip flags}{}} \label{subsec-zipflag}
Let $(G,\mu)$ be a cocharacter datum with attached zip datum $\Zcal=(G,P,L,Q,M,\varphi)$ (see \S\ref{subsec-cochar}). Fix a frame $(B,T,z)$ with $z=\sigma(w_{0,I})w_0=w_0w_{0,J}$ (Lemma \ref{lem-framemu}). The stack of zip flags (\cite[Definition 2.1.1]{Goldring-Koskivirta-Strata-Hasse}) is defined as
\begin{equation}\label{eq-Gzipflag-PmodB}
\GF^\mu=[E\backslash (G_k \times P/B)]    
\end{equation}
where the group $E$ acts on the variety $G_k \times (P/B)$ by the rule $(a,b)\cdot (g,hB) \colonequals (agb^{-1},ahB)$ for all $(a,b)\in E$ and all $(g,hB)\in G_k \times P/B$. The first projection $G_k \times P/B \to G_k$ is $E$-equivariant, and hence yields a natural morphism of stacks
 \begin{equation}\label{projmap-flag}
     \pi  \colon  \GF^\mu \to \GZip^\mu.
 \end{equation}
Similarly to the stack $\GZip^\mu$, there is an interpretation of this stack in terms of torsors, given in \cite[Definition 2.1.1]{Goldring-Koskivirta-Strata-Hasse}. For any $k$-scheme $S$, the $S$-points of $\GF^\mu$ are pairs $(\underline{I},J)$ where $\underline{I}=(I,I_P,I_Q,\iota)$ is a $G$-zip over $S$ (see the end of \S\ref{subsec-zipdatum}) and $J\subset I_P$ is a $B$-torsor. The map \eqref{projmap-flag} is given by forgetting the $B$-torsor $J$.

Set $E' \colonequals E\cap (B\times G_k)$. Then the injective map $G_k \to G_k \times P/B;\ g\mapsto (g,B)$ yields an isomorphism of stacks $[E' \backslash G_k]\simeq \GF^\mu$ (see \cite[(2.1.5)]{Goldring-Koskivirta-Strata-Hasse}). One of the main features of the stack $\GF^\mu$ is the existence of a stratification (indexed by $W$). First define the Schubert stack as the quotient stack
\begin{equation}\label{equ-def-Sbt}
\Sbt \colonequals [B\backslash G_k /B].
\end{equation}
This stack is finite and smooth. Its topological space is isomorphic to $W$, endowed with the topology induced by the Bruhat order on $W$. This follows easily from the Bruhat decomposition of $G$. For $w\in W$, put
\begin{equation}\label{Sbtw}
    \Sbt_w\colonequals [B\backslash BwB /B].
\end{equation}
It is a locally closed substack of $\Sbt$. $\Sbt_{w_0}$ is the unique open stratum of $\Sbt$. One has the inclusion $E'\subset B\times {}^z B$. In particular, there is a natural projection map $[E'\backslash G_k]\to [B\backslash G_k/{}^z\! B]$. To obtain a map to $\Sbt$, we compose with the isomorphism of stacks $[B\backslash G_k/{}^z B]\to [B\backslash G_k/B]$ induced by $G_k\to G_k;\ g\mapsto gz$. In the end, we obtain a smooth, surjective morphism of stacks
 \begin{equation}\label{eq-GF-to-Sbt}
     \psi  \colon  \GF^\mu \to \Sbt.
 \end{equation}
The \emph{flag strata} of the stack $\GF^\mu$ are defined as the fibers of $\psi$. They are locally closed substacks (endowed with the reduced structure). This gives a stratification of $\GF^\mu$ indexed by $W$. For $w\in W$, put
\begin{equation}
C_w \colonequals B(wz^{-1}){}^zB=BwBz^{-1},
\end{equation} 
which is the $B\times {}^zB$-orbit of $wz^{-1}$. The set $C_w$ is locally closed in $G_k$, and one has the dimension formula $\dim(C_w)=\ell(w)+\dim(B)$. The Zariski closure $\overline{C}_w$ is normal (\cite[Theorem 3]{Ramanan-Ramanathan-projective-normality}) and coincides with the union $\bigcup_{w'\leq w}C_{w'}$, where $\leq$ is the Bruhat order of $W$. Via the isomorphism $\GF^\mu\simeq [E'\backslash G_k]$, the flag strata of $\GF^\mu$ are given by
\begin{equation}\label{zipflag-Cw}
\Ccal_w \colonequals [E'\backslash C_w], \quad w\in W.    
\end{equation}
The set $C_{w_0}\subset G_k$ is the unique open $B\times {}^zB$-orbit in $G_k$ and similarly the flag stratum $\Ccal_{w_0}$ is open in $\GF^\mu$. The $B\times {}^zB$-orbits of codimension 1 are given by $C_{s_\alpha w_0}$ for $\alpha\in \Delta$.

\subsection{\texorpdfstring{Vector bundles on $\GF^\mu$}{}}
Let $\rho \colon B\to \GL(V)$ be an algebraic representation. Since $\GF^\mu$ classifies pairs $(\underline{I},J)$ where $J$ is a $B$-torsor, applying $\rho$ to $J$ yields a vector bundle $\Vcal_{\flag}(\rho)$ on $\GF^\mu$. We can also construct $\Vcal_{\flag}(\rho)$ as follows. View $\rho$ as a representation of $E'$ via the first projection $E'\to B$. Then $\rho$ induces a vector bundle on $[E'\backslash G_k]\simeq \GF^\mu$ by \S\ref{sec-genth-vb}. In this description, we see that the functor $\Vcal_{\flag}$ extends to a functor $\Rep(E')\to \VB(\GF^\mu)$. This fact will briefly be used in equation \eqref{pull-back-psi}. However, we will mainly consider vector bundles on $\GF^\mu$ coming from $\Rep(B)$. 

Note that the rank of $\Vcal_{\flag}(\rho)$ is the dimension of $\rho$. In particular, if $\lambda\in X^*(B)=X^*(T)$, then $\Vcal_{\flag}(\lambda)$ is a line bundle. Let $(V,\rho)\in \Rep(P)$ and let $\Vcal(\rho)$ be the attached vector bundle on $\GZip^\mu$. One has
\begin{equation}\label{pullbackV}
\pi^*(\Vcal(\rho))=\Vcal_{\flag}(\rho|_B).
\end{equation}
For $(V,\rho)\in \Rep(B)$, write $\Vcal_{P/B}(\rho)$ for the vector bundle on $P/B$ attached to $\rho$ by \S\ref{sec-genth-vb}. Then $H^0(P/B,\Vcal_{P/B}(\rho))=\Ind_B^P(\rho)$ is a $P$-representation, where $\Ind$ denotes induction. Concretely, we have:
\begin{equation}\label{eq-Indrho}
\Ind_B^P(\rho)=\{f\colon P\to V \mid f(xb) = \rho(b^{-1})f(x), \ \forall b\in B,\ \forall x\in P \}.
\end{equation}
Furthermore, for $y\in P$ and $f\in \Ind_B^P(\rho)$, the element $y\cdot f$ is the function $x\mapsto f(y^{-1} x)$.

\begin{proposition}\label{prop-push-flag-zip}
For $(V,\rho)\in \Rep(B)$, we have the identification
\begin{equation}
    \pi_{*}(\Vcal_{\flag}(\rho))=\Vcal(\Ind_B^P(\rho)).
\end{equation}
In particular $\pi_{*}(\Vcal_{\flag}(\rho))$ is a vector bundle on $\GZip^\mu$.
\end{proposition}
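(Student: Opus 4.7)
The plan is to prove the isomorphism by descent along the $E$-torsor $q\colon G_k \to \GZip^\mu = [E\backslash G_k]$. Using the presentation $\GF^\mu = [E\backslash (G_k \times P/B)]$ from \eqref{eq-Gzipflag-PmodB}, the morphism $\pi$ is induced by the $E$-equivariant first projection $\pr_1\colon G_k \times P/B \to G_k$, where $E$ acts on $P/B$ through the first projection $p_1\colon E \to P$. Pulling $\pi$ back along $q$ yields the Cartesian square
\[
\xymatrix{
G_k \times P/B \ar[r]^-{\pr_1} \ar[d] & G_k \ar[d]^q \\
\GF^\mu \ar[r]^-{\pi} & \GZip^\mu ,
}
\]
and $\Vcal_{\flag}(\rho)$ is, by its very construction, the $E$-descent of $\pr_2^*\Vcal_{P/B}(\rho)$, where $\pr_2\colon G_k \times P/B \to P/B$ denotes the second projection.

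I would then apply flat base change. Since $\pi$ is proper (its geometric fibers are copies of the flag variety $P/B$) and $q$ is faithfully flat, the pullback of $\pi_*\Vcal_{\flag}(\rho)$ to $G_k$ agrees with $\pr_{1,*}\bigl(\pr_2^*\Vcal_{P/B}(\rho)\bigr)$. Applying flat base change once more, this time along $G_k \to \spec(k)$, yields
\[
\pr_{1,*}\bigl(\pr_2^*\Vcal_{P/B}(\rho)\bigr) \;\simeq\; \Ocal_{G_k}\otimes_k H^0\bigl(P/B,\Vcal_{P/B}(\rho)\bigr) \;=\; \Ocal_{G_k}\otimes_k \Ind_B^P(\rho),
\]
the last equality being the very definition of $\Ind_B^P(\rho)$ recalled in \eqref{eq-Indrho}.

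To finish I would verify that the $E$-equivariant structure on this trivial bundle matches the one defining $\Vcal(\Ind_B^P(\rho))$. On $\pr_{1,*}\bigl(\pr_2^*\Vcal_{P/B}(\rho)\bigr)$ the action of $E$ comes from the diagonal $E$-action on $G_k \times P/B$; restricted to the constant factor $\Ind_B^P(\rho)$ it is the natural left $P$-action on $\Ind_B^P(\rho)$ pulled back along $p_1\colon E \to P$. This is precisely the $E$-representation structure attached to $\Ind_B^P(\rho)\in \Rep(P)$ via the functor $p_1^*$ of \S\ref{sec-VBGzip}. Faithfully flat descent along $q$ then gives $\pi_*\Vcal_{\flag}(\rho)\simeq \Vcal(\Ind_B^P(\rho))$; in particular $\pi_*\Vcal_{\flag}(\rho)$ is a vector bundle of rank $\dim_k \Ind_B^P(\rho)$. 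The only mild technical point is the justification of flat base change in the stacky setting, which is standard once one passes to smooth atlases of the stacks involved.
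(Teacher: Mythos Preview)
Your argument is correct. Both your approach and the paper's rely on the same underlying computation, namely that the fiber of $\pi$ is $P/B$ carrying the bundle $\Vcal_{P/B}(\rho)$ with global sections $\Ind_B^P(\rho)$, but they package this differently.

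The paper argues by adjunction: the evaluation map $\Ind_B^P(\rho)\to\rho$, $f\mapsto f(1)$, is a morphism of $B$-representations, hence gives $\pi^*\Vcal(\Ind_B^P(\rho))=\Vcal_{\flag}(\Ind_B^P(\rho)|_B)\to\Vcal_{\flag}(\rho)$, and by adjunction a map $\Vcal(\Ind_B^P(\rho))\to\pi_*\Vcal_{\flag}(\rho)$ of sheaves on $\GZip^\mu$; one then checks this is an isomorphism on stalks. The advantage is that the comparison map is canonical and lives on the stack from the start, so no separate verification of the $E$-equivariant structure is needed. Your route---pull back along the atlas $G_k\to\GZip^\mu$, compute via flat base change, and then check by hand that the $E$-actions match before descending---is more explicit but requires that extra bookkeeping step. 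Either way one ends up invoking properness of $P/B$ and flatness of the atlas map; your proof just makes this explicit while the paper hides it in the phrase ``isomorphism on the stalks.'' The one claim in your write-up that deserves a sentence of justification is that the pullback of $\Vcal_{\flag}(\rho)$ to $G_k\times P/B$ is $\pr_2^*\Vcal_{P/B}(\rho)$; this is true, but it is not literally the definition given in the paper (which uses either the universal $B$-torsor or the presentation $[E'\backslash G_k]$), so a word on why these agree would tighten the argument.
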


\begin{proof}
Consider the natural map $\Ind_B^P(\rho)\to \rho$ defined by $f\mapsto f(1)$ (where $f\in \Ind_B^P(\rho)$ is viewed as a regular function $f \colon P\to V$). This defines a map of $B$-representations. Hence, it induces a morphism $\pi^*(\Vcal(\Ind_B^P(\rho)))=\Vcal_{\flag}(\Ind_B^P(\rho)|_B)\to \Vcal_{\flag}(\rho)$ of vector bundles on $\GF^\mu$. By adjunction, we obtain a morphism of vector bundles $\Vcal(\Ind_B^P(\rho))\to \pi_{*}(\Vcal_{\flag}(\rho))$ on $\GZip^\mu$. Since this map induces an isomorphism on the stalks, it is itself an isomorphism.
\end{proof}

As a special case of Proposition \ref{prop-push-flag-zip}, consider the case when $\rho$ is a character $\lambda\in X^*(T)$. Then $\Vcal_{\flag}(\lambda)$ is a line bundle and one has the formula $\pi_*(\Vcal_{\flag}(\lambda))=\Vcal_I(\lambda)$ where the vector bundle $\Vcal_I(\lambda)$ was defined in \S\ref{sec-Lreps}. This recovers the formula (1.3.3) of \cite{Koskivirta-automforms-GZip} (where $\Vcal_{\flag}(\lambda)$ was denoted by $\Lscr(\lambda)$ and $\Vcal_I(\lambda)$ by $\Vcal(\lambda)$). In particular, we have 
\begin{equation}\label{ident-H0-GF}
    H^0(\GZip^\mu,\Vcal_I(\lambda))=H^0(\GF^\mu,\Vcal_{\flag}(\lambda)).
\end{equation}
If $f \colon G_k \to k$ is a section of the right hand side of \eqref{ident-H0-GF}, 
then the corresponding function 
$f_I \colon G_k \to V_I (\lambda)$ 
on the left hand side of 
\eqref{ident-H0-GF} is given by 
\begin{equation}\label{eq:H0expid}
 (f_I (g))(x)=f ((x^{-1},\varphi(x)^{-1}) \cdot g)= 
 f(x^{-1} g \varphi (x)) 
\end{equation}
for $g \in G_k$ and $x \in L$ 
by the construction of the identification. 
Note also that the line bundles $\Vcal_{\flag}(\lambda)$ satisfy the identity
\begin{equation}\label{Vflag-additivity}
\Vcal_{\flag}(\lambda+\lambda')=\Vcal_{\flag}(\lambda)\otimes \Vcal_{\flag}(\lambda'), \quad \forall \lambda, \lambda'\in X^*(T).
\end{equation}

\section{\texorpdfstring{Vector bundles on $\Sbt$}{}}

\subsection{Definition}\label{sec-VBSbt}
Any algebraic representation $\rho \colon B\times B\to \GL(V)$ is given by two commuting $B$-representations $\rho_1,\rho_2 \colon B\to \GL(V)$. We write $\rho=(\rho_1,\rho_2)$. By \S\ref{sec-genth-vb}, it induces a vector bundle $\Vcal_{\Sbt}(\rho_1,\rho_2)$ on $\Sbt = [B \backslash G_k/B]$. 
Let $m_z \colon B\times {}^zB\to B\times B;\ (x,y)\mapsto (x,z^{-1}yz)$. Recall that we have a map $\psi\colon \GF^\mu \to \Sbt$, as in \eqref{eq-GF-to-Sbt}. We have
\begin{equation}\label{pull-back-psi}
  \psi^*\Vcal_{\Sbt}(\rho_1,\rho_2) = \Vcal_{\flag}((m_z^*(\rho_1,\rho_2))|_{E'})
\end{equation}
where $m_z^*(\rho_1,\rho_2)$ is the pullback via $m_z$ of $(\rho_1,\rho_2)$ to a $B\times {}^z B$-representation. Since we are mainly interested in vector bundles on $\GF^\mu$ attached to $B$-representations, we define the following condition:
\begin{condition}\label{condition-trivial}
The representation $\rho_2$ is trivial on $z^{-1}R_{\mathrm{u}}(Q)z$.
\end{condition}
Note that $z^{-1}R_{\mathrm{u}}(Q)z\subset B$ by \eqref{eqBorel}. If Condition \ref{condition-trivial} is satisfied, the $E'$-representation $m_z^*(\rho_1,\rho_2)|_{E'}$ arises from a $B$-representation via the first projection $E'\to B$. Specifically, we have in this case
\begin{equation}\label{Rrhorep}
\psi^*\Vcal_{\Sbt}(\rho_1,\rho_2)=\Vcal_{\flag}(R(\rho_1,\rho_2)),    
\end{equation}
where $R(\rho_1,\rho_2) \colon B\to \GL(V)$ is defined by
\begin{equation}
  R(\rho_1,\rho_2)(b)=\rho_1(b)\rho_2(z^{-1}\varphi(\theta^P_{L}(b))z), \quad \forall b\in B.
\end{equation}
Let $\Acal$ be the subcategory of $\Rep(B)$ of representations which are trivial on $z^{-1}R_{\mathrm{u}}(Q)z$. Put $B_M\colonequals B\cap M$. For $\rho\in \Rep(B)$, denote by $\rho[z^{-1}]$ the $B_M$-representation $b\mapsto \rho(z^{-1}bz)$ (note that $z^{-1} B_M z\subset B$ by \eqref{eqBorel}). 
This gives an equivalence
 \begin{equation}\label{eq-ztwist}
 [z^{-1}]\colon \Acal\simeq \Rep(B_M);\ \rho \mapsto \rho[z^{-1}]. 
 \end{equation}
Similarly, for $\rho\in \Rep(B_M)$, write $\rho[z]$ for the $B$-representation in $\Acal$ given by $b\mapsto \rho(zbz^{-1})$ for $b \in z^{-1}B_M z$. 
We view both sides as full subcategories of $\Rep(B)$. We will always assume that $\rho_2\in \Acal$ when considering $\Vcal_{\Sbt}(\rho_1,\rho_2)$.

\subsection{\texorpdfstring{Global sections of vector bundles on $\Sbt$}{}}\label{sec-glob-sec-Sbt}

Let $(V,\rho)\in \Rep(B\times B)$ with $\rho=(\rho_1,\rho_2)$, as in \S\ref{sec-VBSbt}, and let $\Vcal_{\Sbt}(\rho_1,\rho_2)$ be the attached vector bundle on $\Sbt$. We determine the space of global sections of the vector bundle $\Vcal_{\Sbt}(\rho_1,\rho_2)$ on $\Sbt$. As a first step, we study section over the open stratum $\Sbt_{w_0}\subset \Sbt$ (see \eqref{Sbtw}). Consider the action of $B\times B$ on $G_k$ given by $(a,b)\cdot g=agb^{-1}$. Define $S_{w_0}\subset B\times B$ as the stabilizer of $w_0$ for this action.

\begin{lemma} \ \label{lemma-sections-Sbtw0}
\begin{assertionlist}
\item \label{Sbtw0-item1} One has $S_{w_0}=\{(t,w_0 t w_0) \mid t\in T\}$.
\item \label{Sbtw0-item2} The open stratum $\Sbt_{w_0}$ is naturally isomorphic to $[1/S_{w_0}]$. 
\item \label{Sbtw0-item3} We have an identification $H^0(\Sbt_{w_0},\Vcal_{\Sbt}(\rho_1,\rho_2))=V^{S_{w_0}}$.
\end{assertionlist}
\end{lemma}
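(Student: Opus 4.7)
The plan is to prove the three assertions in order, with (1) being a direct computation, (2) following from transitivity of the orbit action, and (3) being a formal consequence of (2) combined with \eqref{globquot}.

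For \eqref{Sbtw0-item1}, I will compute the stabilizer directly. An element $(a,b) \in B \times B$ fixes $\dot w_0$ under the action $(a,b)\cdot g = agb^{-1}$ precisely when $a = \dot w_0 b \dot w_0^{-1}$. Since conjugation by $\dot w_0$ interchanges $B$ with its opposite Borel $B^+$, the condition $a \in B$ forces $\dot w_0 b \dot w_0^{-1} \in B \cap B^+ = T$. Hence $a = t \in T$ and $b = \dot w_0^{-1} t \dot w_0$. Normalizing representatives so that $\dot w_0^{-1} t \dot w_0 = \dot w_0 t \dot w_0$ (and observing that the set description is independent of the chosen representative since $T$ is normal in $N_G(T)$), this gives the claimed description.

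For \eqref{Sbtw0-item2}, recall that $\Sbt_{w_0} = [B \backslash B\dot w_0 B / B]$, which by definition is the quotient stack $[(B \times B) \backslash (B \dot w_0 B)]$ under the action $(a,b) \cdot g = agb^{-1}$. The orbit map $B \times B \to B \dot w_0 B$, $(a,b) \mapsto a \dot w_0 b^{-1}$, is surjective (by definition of the orbit) and has scheme-theoretic fibers equal to $S_{w_0}$-cosets. Hence this orbit map identifies $B \dot w_0 B$ with the homogeneous space $(B \times B)/S_{w_0}$, and the quotient stack becomes $[(B \times B) \backslash (B \times B)/S_{w_0}] = [1/S_{w_0}]$.

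For \eqref{Sbtw0-item3}, I apply the identification \eqref{globquot} to the trivial action of $S_{w_0}$ on the one-point scheme $\spec k$. Under the isomorphism of \eqref{Sbtw0-item2}, the vector bundle $\Vcal_{\Sbt}(\rho_1, \rho_2)$ restricts to the vector bundle on $[1/S_{w_0}]$ associated to the $S_{w_0}$-representation obtained by restricting $\rho = (\rho_1, \rho_2)$ along $S_{w_0} \hookrightarrow B \times B$. By \eqref{globquot}, global sections of such a vector bundle on $B[1/S_{w_0}]$ are exactly the $S_{w_0}$-invariants $V^{S_{w_0}}$, as desired.

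None of the three steps presents a substantive obstacle; the only point requiring care is in \eqref{Sbtw0-item1}, where one must argue that the conjugation $\dot w_0 (-) \dot w_0^{-1}$ really sends $B$ to $B^+$ (which holds by the choice of $B^+$ as the Borel opposite to $B$ with respect to $T$), and that the resulting formula $\{(t, \dot w_0 t \dot w_0) \mid t\in T\}$ is well-defined independent of the lift $\dot w_0$.
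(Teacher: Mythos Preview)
Your proof is correct and follows essentially the same route as the paper's own argument: the paper dispatches (1) as ``clear'', proves (2) via the orbit isomorphism $(B\times B)/S_{w_0}\simeq Bw_0B$ exactly as you do, and calls (3) an ``immediate consequence''. Your write-up simply supplies the details the paper omits; the only blemishes are the stray ``B'' in ``B$[1/S_{w_0}]$'' and the slightly awkward phrasing around $\dot w_0^{-1} t \dot w_0$ versus $\dot w_0 t \dot w_0$ (the cleanest way to say it is that both expressions describe the Weyl-group action of $w_0=w_0^{-1}$ on $T$, which is independent of the lift).
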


\begin{proof}
The first assertion is clear. For the second assertion, note that $B\times B\to G_k;\ (a,b)\mapsto aw_0 b^{-1}$ induces an isomorphism $(B\times B)/S_{w_0}\to B w_0 B$. Thus $\Sbt_{w_0}=[B\times B\backslash B w_0 B]\simeq [B\times B\backslash (B\times B)/S_{w_0}]\simeq [1/S_{w_0}]$. The last assertion is an immediate consequence.
\end{proof}

For $f\in V^{S_{w_0}}$, 
we may view $f$ as a section of $\Vcal_{\Sbt}(\rho_1,\rho_2)$ over $\Sbt_{w_0}$ (Lemma \ref{lemma-sections-Sbtw0}(3)). Consider the associated regular map $\widetilde{f}\colon Bw_0B\to V$. We will check under what condition $\widetilde{f}$ extends to a regular map $G_k\to V$. We briefly recall notations used in \cite[\S 3.1]{Imai-Koskivirta-vector-bundles}. For all $\alpha\in \Phi$, choose a realization $(u_{\alpha})_{\alpha \in \Phi}$ and let $\phi_{\alpha}\colon \SL_2\to G$ be the map attached to $\alpha$ (see \S \ref{subsec-notation}). Put $A(t)= 
 \begin{pmatrix}
 t & 1 \\ -1 & 0 
 \end{pmatrix} \in \SL_2$. Note that $\phi_{\alpha} (A(0))=s_\alpha$ and  $\phi_{\alpha}(\diag(t,t^{-1}))=\alpha^\vee(t)$. As in \cite[(3.1.3)]{Imai-Koskivirta-vector-bundles}, one has a decomposition
\begin{equation} \label{At}
A(t)=\left(\begin{matrix}
1&0\\-t^{-1}&1
\end{matrix} \right) \left(\begin{matrix}
t&1\\0&t^{-1}
\end{matrix} \right)=\left(\begin{matrix}
1&0\\-t^{-1}&1
\end{matrix} \right) \left(\begin{matrix}
t&0\\0&t^{-1}
\end{matrix} \right)\left(\begin{matrix}
1&t^{-1}\\0&1
\end{matrix} \right).
\end{equation}
Let $\alpha\in \Delta$ be a simple root. Set $Y=B\times B\times \AA^1$ and let $\psi_\alpha\colon Y\to G$ be the map
\begin{equation}
\psi_\alpha \colon ((b,b'),t)\mapsto b\phi_\alpha(A(t))w_0 b'^{-1}.
\end{equation}
We have $\phi_\alpha(A(t))\in B B^+ = Bw_0Bw_0$ for $t\neq 0$. We deduce that $\psi_\alpha(b,b',t)\in Bw_0B$ if $t\neq 0$ and $\psi_\alpha(b,b',0)\in Bs_\alpha w_0B$. Write $Y_0\subset Y$ for the open set where $t\neq 0$. Using a similar argument as \cite[Corollary 3.1.5]{Imai-Koskivirta-vector-bundles}, we deduce that $\widetilde{f}$ extends to $G$ if and only if $\widetilde{f}\circ \psi_\alpha \colon Y_0 \to V$ extends to a regular map $Y\to V$ for all $\alpha \in \Delta$. We have
\begin{align}
\widetilde{f}\circ \psi_\alpha (b,b',t) & = \rho_1\left( b \phi_\alpha \left(\begin{matrix}
1&0\\-t^{-1}&1
\end{matrix} \right) \alpha^\vee(t) \right) \rho_2 \left( b' \phi_{-w_0\alpha} \left(\begin{matrix}
1&0\\-t^{-1}&1
\end{matrix} \right) \right) \cdot f \\
&=\rho_1(b)\rho_2(b') \rho_1 \left( u_{-\alpha}(-t^{-1}) \alpha^\vee(t) \right) \rho_2 \left(  u_{w_0\alpha}(-t^{-1})\right) \cdot f . 
\end{align}
Write $F_\alpha(t)=\rho_1 \left( u_{-\alpha}(-t^{-1}) \alpha^\vee(t) \right) \rho_2 \left(  u_{w_0\alpha}(-t^{-1})\right) \cdot f$. It is an element of $V\otimes_k k[t,\frac{1}{t}]$. It follows that $\widetilde{f}\circ \psi_\alpha$ extends to $Y$ if and only if $F_\alpha(t)$ lies in $V\otimes_k k[t]$. Write $f=\sum_{\chi_1}f_{\chi_1}$ for the weight decomposition with respect to the action of $T\times \{1\}$ on $V$. We deduce
\begin{align}
    F_\alpha(t) &= \sum_{\chi_1\in X^*(T)} t^{\langle \chi_1,\alpha^\vee \rangle} \rho_1 \left( u_{-\alpha}(-t^{-1}) \right) \rho_2 \left(  u_{w_0\alpha}(-t^{-1})\right) \cdot f_{\chi_1} \\
    & = \sum_{\chi_1} \sum_{j_1,j_2 \geq 0} (-1)^{j_1+j_2} \  t^{\langle \chi_1,\alpha^\vee \rangle-(j_1+j_2)} E_{-\alpha,0}^{(j_1)}\circ E_{0, w_0\alpha}^{(j_2)} f_{\chi_1} . 
\end{align}
Here $E_{\bullet,\bullet}$ are the maps attached to the $B\times B$-representation $V$ (see \S\ref{subsec-notation}). The $\chi_1$-component $F_{\alpha,\chi_1}(t)$ of $F_\alpha(t)$ is
\begin{align}
 F_{\alpha,\chi_1}(t) &=   \sum_{j_1,j_2 \geq 0} (-1)^{j_1+j_2} \  t^{\langle \chi_1,\alpha^\vee \rangle+ j_1 - j_2} E_{-\alpha,0}^{(j_1)}\circ E_{0, w_0\alpha}^{(j_2)} f_{\chi_1+j_1\alpha}  \\
 & = \sum_{d\in \ZZ} t^{d+\langle \chi_1,\alpha^\vee \rangle}(-1)^d \sum_{j_1 \geq d}  E_{-\alpha,0}^{(j_1)} E_{0,w_0\alpha}^{(j_1-d)}f_{\chi_1+j_1\alpha} . 
\end{align}
Hence $F_\alpha(t)\in V\otimes k[t]$ if and only if for all $d\in \ZZ$ and all $\chi_1$ such that $d+\langle \chi_1,\alpha^\vee \rangle <0$, one has $\sum_{j_1 \geq d} E_{-\alpha,0}^{(j_1)} E_{0,w_0\alpha}^{(j_1-d)}f_{\chi_1+j_1\alpha}=0$. Therefore, we showed the following proposition.

\begin{proposition}\label{prop-Sbt-glob}
The space $ H^0(\Sbt, \Vcal_{\Sbt}(\rho_1,\rho_2))$ identifies with the subspace of $f\in V^{S_{w_0}}$ satisfying: For all $d\in \ZZ$ and all $\chi_1\in X^*(T)$ such that $d+\langle \chi_1,\alpha^\vee \rangle <0$, one has
\begin{equation}
    \sum_{j_1 \geq d} E_{-\alpha,0}^{(j_1)} E_{0,w_0\alpha}^{(j_1-d)}f_{\chi_1+j_1\alpha}=0.
\end{equation}
\end{proposition}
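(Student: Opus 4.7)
The plan is to use the Bruhat stratification of $\Sbt$ and check when a section over the open stratum extends to the codimension-one boundary strata. By Lemma \ref{lemma-sections-Sbtw0}(3), restriction to the open stratum yields an injection $H^0(\Sbt,\Vcal_{\Sbt}(\rho_1,\rho_2))\hookrightarrow V^{S_{w_0}}$, and because $\Sbt$ is smooth of pure codimension-one complement $\bigcup_{\alpha\in\Delta}\Sbt_{s_\alpha w_0}$, a section on $\Sbt_{w_0}$ extends globally iff it extends across each $\Sbt_{s_\alpha w_0}$ separately. Thus the task reduces, for every simple root $\alpha$, to testing extension of the associated regular function $\widetilde f\colon Bw_0B\to V$ across a single codimension-one stratum.

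To test extension for a fixed $\alpha$, I would use the family $\psi_\alpha\colon Y=B\times B\times \AA^1\to G$ described in the excerpt, which sends $\{t\ne 0\}$ into $Bw_0B$ and $\{t=0\}$ into $Bs_\alpha w_0 B$ (this follows from $\phi_\alpha(A(t))\in Bw_0Bw_0$ for $t\ne 0$ and $\phi_\alpha(A(0))=s_\alpha$). Extension across $\Sbt_{s_\alpha w_0}$ is then equivalent to the assertion that $\widetilde f\circ\psi_\alpha$, a priori regular on $Y_0=\{t\ne 0\}$, extends to $Y$. Using the $BB^+$-decomposition \eqref{At} of $A(t)$ together with the defining property of $\phi_\alpha$ and the relation $u_{-w_0\alpha}$ is conjugated by $w_0$ to the appropriate root group on the right, one factorizes
\[
\widetilde f\circ\psi_\alpha(b,b',t)=\rho_1(b)\rho_2(b')\,F_\alpha(t),\qquad F_\alpha(t)\in V\otimes_k k[t,t^{-1}],
\]
with $F_\alpha(t)=\rho_1(u_{-\alpha}(-t^{-1})\alpha^\vee(t))\,\rho_2(u_{w_0\alpha}(-t^{-1}))\cdot f$. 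The extension question becomes the purely algebraic condition $F_\alpha(t)\in V\otimes_k k[t]$.

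To convert this into the stated conditions, I would decompose $f=\sum_{\chi_1} f_{\chi_1}$ with respect to the $T\times\{1\}$-action, so that $\alpha^\vee(t)$ contributes $t^{\langle\chi_1,\alpha^\vee\rangle}$, and expand $u_{-\alpha}(-t^{-1})$, $u_{w_0\alpha}(-t^{-1})$ via the operators $E_{-\alpha,0}^{(j_1)}$, $E_{0,w_0\alpha}^{(j_2)}$ introduced in \S\ref{subsec-notation}. This gives the double sum displayed in the exposition, and regrouping by the substitution $d=j_2-j_1$ and by weight-index $\chi_1$ (taking into account the weight shift $\chi_1\mapsto \chi_1+j_1\alpha$ induced by $E_{-\alpha,0}^{(j_1)}$) produces a coefficient of $t^{d+\langle\chi_1,\alpha^\vee\rangle}$ equal to $(-1)^d\sum_{j_1\ge d}E_{-\alpha,0}^{(j_1)}E_{0,w_0\alpha}^{(j_1-d)}f_{\chi_1+j_1\alpha}$. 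Polynomiality of $F_\alpha(t)$ is then equivalent to the vanishing of this coefficient whenever $d+\langle\chi_1,\alpha^\vee\rangle<0$, which is exactly the asserted family of relations. The main bookkeeping obstacle is step~3, namely keeping the weight shifts consistent when re-indexing so that the summation variable $j_1$ matches the proposition's formulation; once this is done, the equivalence is immediate from the ``regular iff all negative-degree Laurent coefficients vanish'' principle.
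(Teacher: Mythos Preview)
Your proposal is correct and follows essentially the same argument as the paper: reduce to extension across the codimension-one strata $\Sbt_{s_\alpha w_0}$ via the one-parameter family $\psi_\alpha$, factor off the $B\times B$-action to obtain $F_\alpha(t)\in V\otimes k[t,t^{-1}]$, and then expand via the $E^{(j)}$-operators and the $T\times\{1\}$-weight decomposition to read off the Laurent coefficients. The only slip is the sign in your substitution: the exponent $\langle\chi_1,\alpha^\vee\rangle+j_1-j_2$ forces $d=j_1-j_2$ (not $j_2-j_1$), which is exactly the bookkeeping you flagged and which then matches the stated sum $\sum_{j_1\ge d}E_{-\alpha,0}^{(j_1)}E_{0,w_0\alpha}^{(j_1-d)}f_{\chi_1+j_1\alpha}$.
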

There is a similar result for maps of vector bundles on $\Sbt$, which we now explain. Let $(\rho_1,\rho_2)$ and $(\rho_1',\rho_2')$ be $B\times B$-representations with underlying vector spaces $V,V'$ respectively. Then a morphism $\Vcal_{\Sbt}(\rho_1,\rho_2)\to \Vcal_{\Sbt}(\rho'_1,\rho'_2)$ is equivalent to a $k$-linear map $f \colon V\to V'$ satisfying the following conditions.
\begin{assertionlist}
   \item $f$ is $S_{w_0}$-equivariant.
    \item For all $\alpha \in \Delta$, for all $d\in \ZZ$ and all character $\chi_1$ such that $d+\langle \chi_1, \alpha^\vee \rangle < 0$, the map
\begin{equation}
    \sum_{j_1 \geq d} \sum_{a=0}^{j_1}\sum_{b=0}^{j_1-d} (-1)^{a+b} E'^{(a)}_{-\alpha,0} E'^{(b)} _{0,w_0\alpha}f E_{0,w_0\alpha}^{(j_1-d-b)} E_{-\alpha,0}^{(j_1-a)}
\end{equation}
maps to zero under the projection
\begin{equation}\label{proj-map}
    \Hom(V,V) \simeq \bigoplus_{\nu_1,\nu'_1} \Hom(V_{\nu_1},V_{\nu'_1})\to \bigoplus_{\nu_1}\Hom(V_{\nu_1},V_{\nu_1+\chi_1}).
\end{equation}
\end{assertionlist}

\section{Schubert sections, partial Hasse invariants}

\subsection{Definition}\label{subsec-Schubert-sections}

Let $(\chi,\nu)\in X^*(T)\times X^*(T)$. Then $H^0(\Sbt_{w_0}, \Vcal_{\Sbt}(\chi,\nu))\neq 0$ if and only if $\nu=-w_0\chi$. This follows from Lemma \ref{lemma-sections-Sbtw0}\eqref{Sbtw0-item3} (see also \cite[Theorem 2.2.1(a)]{Goldring-Koskivirta-Strata-Hasse}). If this condition is satisfied, the space $H^0(\Sbt_{w_0}, \Vcal_{\Sbt}(\chi,\nu))$ is one-dimensional, and the divisor of any nonzero element
\begin{equation}\label{hchi}
h_\chi\in H^0(\Sbt_{w_0}, \Vcal_{\Sbt}(\chi,-w_0\chi)) 
\end{equation}
is given by Chevalley's formula (\cite[Theorem 2.2.1(c)]{Goldring-Koskivirta-Strata-Hasse}):
\begin{equation}\label{divflam}
\div(h_\chi)=\sum_{\alpha\in \Delta}\langle \chi, \alpha^\vee \rangle \overline{B w_0 s_\alpha   B}. 
\end{equation}
In particular, $h_\chi$ extends to $\Sbt$ if and only if $\chi\in X^*_+(T)$ (this also follows from Proposition \ref{prop-Sbt-glob}). For all $\chi,\nu \in X^*(T)$, one has $\psi^*\Vcal_{\Sbt}(\chi,\nu)=\Vcal_{\flag}(R(\chi,\nu))$ by \eqref{Rrhorep}, where $R(\chi,\nu)$ is the character $t\mapsto \chi(t)\nu(z^{-1}\varphi(t)z)$. Hence, we obtain
\begin{equation}\label{pullbackformula}
\psi^*\Vcal_{\Sbt}(\chi,\nu)=\Vcal_{\flag}(\chi+(z\nu)\circ \varphi)=\Vcal_{\flag}(\chi+q\sigma^{-1}(z\nu)).
\end{equation}
In particular, for the pair $(\chi,-w_0\chi)$, we obtain
\begin{equation}
\psi^*\Vcal_{\Sbt}(\chi,-w_0\chi)=\Vcal_{\flag}(\chi-qw_{0,I}(\sigma^{-1}\chi))    
\end{equation}
where we used that $z=\sigma(w_{0,I})w_0$.

Recall that we have a locally closed stratification $(\Ccal_w)_{w\in W}$ on $\GF^\mu$ defined as fibers of the map $\psi \colon \GF^\mu \to \Sbt$. In particular, $\Ccal_{w_0}$ is the unique open flag stratum (see \S \ref{subsec-zipflag}). By the previous discussion, it follows that for any $\chi\in X^*(T)$, $\Ha_\chi \colonequals \psi^*(h_\chi)$ is a section of $\Vcal_{\flag}( \chi-qw_{0,I}(\sigma^{-1}\chi))$ over $\Ccal_{w_0}$. Furthermore, for $\chi\in X^*_+(T)$, the section $\Ha_\chi$ extends to a global section over $\GF^\mu$.

\begin{definition}\label{def-schubert-section}
We say that $f\in H^0(\GF^{\mu},\Vcal_{\flag}(\lambda))$ is a Schubert section if there exists $\chi \in X_{+}^*(T)$ such that $\lambda = \chi-q w_{0,I} (\sigma^{-1}\chi)$ and $f=\Ha_\chi$ up to a nonzero scalar.
\end{definition}

Recall that $H^0(\GF^{\mu},\Vcal_{\flag}(\lambda))=H^0(\GZip^\mu,\Vcal_I(\lambda))$, and this space injects into the representation $V_I(\lambda)$ by $\ev_1$ (see \eqref{injection-ev1}). Write 
\begin{equation}
V_{I,\Sbt}(\lambda)\subset V_I(\lambda)
\end{equation}
for the subspace of Schubert sections. By unicity of $h_\chi$ up to scalar, we have $\dim(V_{I, \Sbt}(\lambda))\leq 1$, and $V_{I, \Sbt}(\lambda)\neq 0$ if and only if there exists $\chi \in X_{+}^*(T)$ such that $\lambda = \chi-q w_{0,I} (\sigma^{-1}\chi)$. We will call $V_{I, \Sbt}(\lambda)$ the \emph{Schubert line} of $V_I(\lambda)$.

\begin{lemma}\label{lemma-zero-eigenfunction}
Let $H$ be a connected, smooth algebraic group over $k$ acting on an irreducible normal $k$-variety $X$. Let $f\colon X\to \AA^1$ be a nonzero function such that the vanishing locus of $f$ is $H$-stable. Then there exists a character $\lambda\in X^*(H)$ such that $f$ satisfies $f(h\cdot x)=\lambda(h)f(x)$ for all $(h,x)\in H\times X$.
\end{lemma}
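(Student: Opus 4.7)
The strategy is to produce $\lambda$ as $\lambda(h) = f(h\cdot x)/f(x)$, shown to be independent of $x$ and multiplicative in $h$. The implementation has three steps.

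Consider the rational function $c(h,x) \colonequals f(h\cdot x)/f(x)$ on $H\times X$. The first step is to show that $c$ extends to a regular morphism $H\times X\to \GG_{\mathrm{m}}$. Let $\sigma \colon H\times X\to X$ denote the action and $\mathrm{pr}_2$ the second projection, and set $F \colonequals f\circ \sigma$, $G \colonequals f\circ \mathrm{pr}_2$. Introduce the automorphism $\beta \colon H\times X\to H\times X$, $(h,x)\mapsto (h,h\cdot x)$, so that $F = G\circ \beta$. Since $V(f)$ is $H$-stable and $H$ is connected, the $H$-action permutes the finite set of irreducible components of $V(f)$, forcing each codimension-one irreducible component $D$ to be $H$-stable setwise. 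Hence every component $H\times D$ of the pullback divisor $\div(G) = H\times \div(f)$ is fixed by $\beta^{-1}$ (with multiplicity preserved), so $\div(F) = \beta^{-1}\div(G) = \div(G)$. As $H\times X$ is normal, $c = F/G$ is therefore a regular unit on $H\times X$.

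Next, I would apply the product form of Rosenlicht's unit theorem: since $H$ is connected smooth and $X$ is irreducible normal over the algebraically closed field $k$, both are geometrically integral, and every element of $\Ocal(H\times X)^\times$ decomposes as $\alpha(h)\beta(x)$ with $\alpha \in \Ocal(H)^\times$ and $\beta \in \Ocal(X)^\times$. Evaluating $c$ at $h=1$ gives $\alpha(1)\beta(x) = 1$ for all $x$, which forces $\beta$ to be a constant, so $c$ depends only on $h$. Define $\lambda\colon H\to \GG_{\mathrm{m}}$ by $\lambda(h) \colonequals c(h,x)$ for any choice of $x$.

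Finally, the cocycle relation $c(h_1 h_2, x) = c(h_1, h_2\cdot x)\, c(h_2, x)$, immediate from the definition, reduces to $\lambda(h_1 h_2) = \lambda(h_1)\lambda(h_2)$ after the previous step; thus $\lambda \in X^*(H)$, and $f(h\cdot x) = \lambda(h)f(x)$ by construction. The main obstacle is the divisor-level equality $\div(F) = \div(G)$: set-theoretic equality of zero loci is immediate from $H$-stability, but matching multiplicities genuinely requires the automorphism $\beta$ together with normality of $H\times X$ and the fact that $H$ being connected fixes each component. As an alternative to Rosenlicht, one could exploit that $\Ocal(X)^\times/k^\times$ is finitely generated (hence discrete) and argue that the map $H\to \Ocal(X)^\times/k^\times$ induced by $h\mapsto c(h,\cdot)$ is forced to be constant by connectedness of $H$.
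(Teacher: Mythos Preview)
Your proof is correct and follows essentially the same route as the paper: form the ratio $f(h\cdot x)/f(x)$, show it is a global unit on $H\times X$ via a divisor computation (using connectedness of $H$ and normality), apply Rosenlicht's product decomposition of units, and conclude that the resulting function of $h$ alone is a character. The only cosmetic difference is that the paper cites Rosenlicht's Theorem~3 for the final multiplicativity step, whereas you argue it directly from the cocycle relation after evaluating at $h=1$.
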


\begin{proof}
First, note that since $H$ is connected, each irreducible component of $\div(f)$ is stable under the action of $H$. 
Hence we have $\div(h\cdot f)=\div(f)$ for all $h\in H$. Consider the rational map $\psi\colon H\times X\dashrightarrow \AA^1 $, $(h,x)\mapsto \frac{f(h\cdot x)}{f(x)}$. Its divisor is zero, so $\psi$ extends to a non-vanishing map $H\times X\to \GG_{\mathrm{m}}$. 
By \cite[Theorem 2]{Rosenlicht-toroidal}, we can write $\psi(h,x)=A(h)B(x)$ for two non-vanishing functions $A\colon H\to \GG_{\mathrm{m}}$ and $B\colon X\to \GG_{\mathrm{m}}$. Then the claim follows from 
\cite[Theorem 3]{Rosenlicht-toroidal} (\cf arguments in the proof of \cite[Lemma 5.1.2]{Koskivirta-automforms-GZip}).
\end{proof}

\begin{proposition}\label{prop-equiv-Schubert-sec}
For a regular map $f\colon G_k \to \AA^1$, the following are equivalent:
\begin{equivlist}
\item There is $\lambda\in X^*(T)$ such that $f\in H^0(\GF^\mu,\Vcal_{\flag}(\lambda))$ and $f$ is a Schubert section.
\item $f$ is non-vanishing on $C_{w_0}$.
\end{equivlist}
\end{proposition}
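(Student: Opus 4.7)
The plan is to treat the two implications separately.

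\textbf{For (i)$\Rightarrow$(ii).} I unravel the definition: $\Ha_\chi=\psi^{*}h_\chi$ with $h_\chi$ a generator of the one-dimensional space $H^{0}(\Sbt_{w_0},\Vcal_{\Sbt}(\chi,-w_0\chi))$. By Lemma \ref{lemma-sections-Sbtw0}\eqref{Sbtw0-item2}, $\Sbt_{w_0}\simeq[1/S_{w_0}]$, and this restricted line bundle corresponds to the character of $S_{w_0}$ sending $(t,w_0tw_0)$ to $\chi(t)(-w_0\chi)(w_0tw_0)=\chi(t)\cdot\chi(t)^{-1}=1$. Hence the bundle is trivial on $\Sbt_{w_0}$ and $h_\chi$ is nowhere vanishing there. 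Pulling back along the smooth morphism $\psi$ then shows $\Ha_\chi$ is nonvanishing on $\psi^{-1}(\Sbt_{w_0})=\Ccal_{w_0}$, which is the image of $C_{w_0}\subset G_k$.

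\textbf{For the main direction (ii)$\Rightarrow$(i).} Assume $f$ is nonvanishing on $C_{w_0}$; in particular $f$ is a nonzero regular function. By Krull's Hauptidealsatz, the vanishing locus $V(f)$ is pure of codimension one in the smooth irreducible variety $G_k$. It is contained in $G_k\setminus C_{w_0}$, whose codimension-one components are precisely the $B\times{}^zB$-orbit closures $\overline{C}_{s_\alpha w_0}$ for $\alpha\in\Delta$ (recall that $C_w=BwBz^{-1}$ is a single $B\times{}^zB$-orbit). Hence each irreducible component of $V(f)$ is such a divisor, and $V(f)$ is $B\times{}^zB$-stable. Lemma \ref{lemma-zero-eigenfunction}, applied to the connected smooth group $H=B\times{}^zB$ acting on the irreducible normal variety $G_k$, yields characters $\chi_1\in X^{*}(B)=X^{*}(T)$ and $\chi'_2\in X^{*}({}^zB)$ such that $f(b_1 g b_2^{-1})=\chi_1(b_1)\chi'_2(b_2)f(g)$ for all $(b_1,b_2)\in B\times {}^zB$ and $g\in G_k$.

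\textbf{To convert this into a Schubert section}, I define $h(g):=f(gz^{-1})$. The factorization $b_1 g b_2^{-1}z^{-1}=b_1(gz^{-1})(zb_2z^{-1})^{-1}$ combined with $zBz^{-1}={}^zB$ and the semi-invariance of $f$ shows that $h$ is $B\times B$-semi-invariant, hence $h\in H^{0}(\Sbt,\Vcal_{\Sbt}(\chi_1,\chi_2))$ for a character $\chi_2\in X^{*}(T)$ determined by $\chi'_2$ and conjugation by $z$. Since $f$ is nonvanishing on $C_{w_0}=Bw_0Bz^{-1}$, the section $h$ is nonvanishing on the open Bruhat cell $Bw_0B$. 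The one-dimensionality computation from the first paragraph then forces $\chi_2=-w_0\chi_1$ and $h=c\cdot h_{\chi_1}$ for some nonzero $c\in k$, while Chevalley's formula \eqref{divflam} forces $\chi_1\in X_+^{*}(T)$ (otherwise $h_{\chi_1}$ would have a polar component, contradicting the global regularity of $h$ on $\Sbt$). Therefore $f=\psi^{*}(h)=c\cdot\Ha_{\chi_1}$ is a Schubert section.

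The main obstacle is the application of Lemma \ref{lemma-zero-eigenfunction}: the geometric hypothesis that $f$ is nonvanishing on $C_{w_0}$ must be upgraded to the algebraic statement that $f$ is $B\times{}^zB$-semi-invariant, and this requires identifying $V(f)$ as a union of orbit closures via pureness of the divisor together with the Bruhat-type description of the codimension-one strata of $G_k\setminus C_{w_0}$. Once the equivariance is in hand, the remainder is a formal unpacking of the definition of $\psi$ and of the structure of line bundles on $\Sbt$.
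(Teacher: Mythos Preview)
Your proof is correct and follows essentially the same route as the paper's: both directions hinge on Lemma~\ref{lemma-zero-eigenfunction} to promote ``vanishing locus is a union of $B\times{}^zB$-orbits'' to ``$f$ is a $B\times{}^zB$-eigenfunction'', followed by the $z$-twist to land in $H^0(\Sbt,\Vcal_{\Sbt}(\chi,-w_0\chi))$. Your write-up is in fact more careful than the paper's in two places---you spell out the trivial $S_{w_0}$-character computation for (i)$\Rightarrow$(ii), and you make explicit via Chevalley's formula that the resulting $\chi_1$ must be dominant (the paper leaves this implicit)---and your twist $h(g)=f(gz^{-1})$ is the one consistent with the definition of $\psi$ given in \S\ref{subsec-zipflag}.
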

\begin{proof}
(i) $\Rightarrow$ (ii) is easy since $f$ arises from a section over $\Sbt$, and all such sections are of the form $h_\chi$ as in \eqref{hchi}. Conversely, assume $f$ is non-vanishing on $C_{w_0}$. Then the vanishing locus of $f$ is necessarily a union of $B\times {}^zB$-orbits. By Lemma \ref{lemma-zero-eigenfunction}, $f$ is an $B\times {}^zB$-eigenfunction. Twisting by $z$, the function $g\mapsto f(gz)$ is a $B\times B$-eigenfunction, hence it is of the form $h_\chi$ for some character $\chi\in X^*(T)$. This shows the result.
\end{proof}

\subsection{Partial Hasse invariants}
We define the notion of partial Hasse invariant. There is an ambiguity between partial Hasse invariants for $\GZip^\mu$ and $\GF^\mu$, therefore we use a different terminology for each case.

\subsubsection{Zip partial Hasse invariants}\label{sec-zipPHI}  

Consider the stratification $\GZip^\mu = \bigsqcup_{w\in {}^I W} \Xcal_w$ defined at the end of \S \ref{subsec-zipstrata}. The codimension one strata are $\Xcal_{w_{0,I} s_\alpha w_0}$ for $\alpha\in \Delta^P$. Also recall that for each $\lambda\in X^*(L)$, the vector bundle $\Vcal_I(\lambda)$ 
on $\GZip^\mu$ is a line bundle.

\begin{definition}\label{def-zip-PHI}
A zip partial Hasse invariant $h$ is a section of a line bundle $\Vcal_I(\lambda)$ (for $\lambda \in X^*(L)$) 
over $\GZip^\mu$, whose vanishing locus is the Zariski closure of a codimension one stratum $\overline{\Xcal}_{w_{0,I} s_\alpha w_0}$. We say that $h$ is a strict zip partial Hasse invariant if furthermore it has multiplicity one.
\end{definition}

Viewing a zip partial Hasse invariant as a map $G_k \to \AA^1$ as in \eqref{globquot}, its vanishing locus is the closure of an $E$-orbit of codimension one. Conversely, 
we have the following: 
\begin{lemma}\label{lem:cd1zpH}
Assume that $h \colon G_k \to \AA^1$ is a regular map whose vanishing locus is the closure of an $E$-orbit of codimension one. 
Then, there exists a character $\lambda \in X^*(L)$ such that under the identification \eqref{globquot}, $h$ identifies with an element of $H^0(\GZip^\mu,\Vcal_I(\lambda))$, and it is thus a zip partial Hasse invariant. 
\end{lemma}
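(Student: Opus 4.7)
The plan is to apply Lemma \ref{lemma-zero-eigenfunction} with $H=E$ and $X=G_k$ to force $h$ to be an $E$-eigenfunction, and then to check that the resulting character of $E$ necessarily factors through the Levi quotient, producing the desired $\lambda\in X^*(L)$. The hypothesis on the vanishing locus of $h$ ensures that $\div(h)$ is $E$-stable (it is a union of $E$-orbit closures), and $G_k$ is smooth and irreducible. To legitimately invoke Lemma \ref{lemma-zero-eigenfunction}, I first need to verify that $E$ is connected and smooth. This follows from the short exact sequence
\[
1 \longrightarrow R_{\mathrm{u}}(P) \times R_{\mathrm{u}}(Q) \longrightarrow E \xrightarrow{\ (x,y)\mapsto \theta^P_L(x)\ } L \longrightarrow 1,
\]
since both ends are smooth and connected (the map is surjective via $l\mapsto (l,\varphi(l))$, and its kernel is computed directly from the definition \eqref{zipgroup}). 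Lemma \ref{lemma-zero-eigenfunction} then yields a character $\chi\in X^*(E)$ such that $h(\epsilon\cdot g)=\chi(\epsilon)h(g)$ for all $(\epsilon,g)\in E\times G_k$.

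Next, because $R_{\mathrm{u}}(P)\times R_{\mathrm{u}}(Q)$ is unipotent, every character of $E$ is trivial on it, so $\chi$ descends through the above exact sequence to a character $\lambda\in X^*(L)$ with $\chi((x,y))=\lambda(\theta^P_L(x))$. Consequently $h$ satisfies
\[
h(xgy^{-1}) = \lambda(\theta^P_L(x))\, h(g), \quad \forall\, (x,y)\in E,\ \forall\, g\in G_k,
\]
which is precisely the transformation rule describing $H^0(\GZip^\mu,\Vcal_I(\lambda))$ under the identification \eqref{globquot}: indeed, for $\lambda\in X^*(L)$, the representation $V_I(\lambda)=\Ind_{B_L}^L\lambda$ is one-dimensional equal to the character $\lambda$, and the functor $(\theta^P_L)^*$ of \S\ref{sec-Lreps} promotes it to a $P$-representation on which $R_{\mathrm{u}}(P)$ acts trivially. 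Since by assumption the vanishing locus of $h$ is the closure of a codimension-one $E$-orbit, Definition \ref{def-zip-PHI} is satisfied and $h$ is a zip partial Hasse invariant.

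The only technical point is the connectedness and smoothness of $E$, required to apply Lemma \ref{lemma-zero-eigenfunction}; beyond this, the argument reduces to a direct matching between the explicit shape of characters of $E$ and the $E$-equivariance condition built into \eqref{globquot}. There is no genuine obstacle, and the argument has no content beyond these two observations.
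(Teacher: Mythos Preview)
Your proof is correct and follows exactly the same approach as the paper: apply Lemma~\ref{lemma-zero-eigenfunction} to obtain an $E$-eigenfunction, then use that every character of $E$ factors through $L$. You are simply more explicit than the paper about verifying the hypotheses of Lemma~\ref{lemma-zero-eigenfunction} (connectedness and smoothness of $E$ via the extension $1\to R_{\mathrm{u}}(P)\times R_{\mathrm{u}}(Q)\to E\to L\to 1$) and about why the character descends to $L$, both of which the paper states without elaboration.
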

\begin{proof}
By Lemma \ref{lemma-zero-eigenfunction} and the fact that any character of $E$ factors through $L$, the function $h$ satisfies $h(agb^{-1})=\lambda(a)h(g)$ for some character $\lambda \in X^*(L)$. Hence the claim holds. 
\end{proof}

\begin{proposition}\label{prop-set-of-zipPHI}
Let $\Xcal_{w_{0,I}s_\alpha w_0}$ be a codimension one stratum. There exists a zip partial Hasse invariant $h_0$ for $\Xcal_{w_{0,I}s_\alpha w_0}$. Furthermore, if we choose $h_0$ such that $\deg(\div(h_0))$ is minimal, the set of all zip partial Hasse invariants for that stratum is given by
\begin{equation}
\{ s \chi h_0^m \mid s\in k^\times, \ \chi\in X^*(G), \ m\geq 1  
\}.
\end{equation}
\end{proposition}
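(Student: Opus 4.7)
I would prove this in two stages: first, existence of $h_0$; second, classification of all zip partial Hasse invariants for the given stratum.

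\emph{Existence.} Set $D \colonequals \overline{G}_{w_{0,I}s_\alpha w_0}\subset G_k$, so that $\overline{\Xcal}_{w_{0,I}s_\alpha w_0} = [E\backslash D]$. Since $G_k$ is smooth, $D$ is an effective Cartier divisor. The Picard group of a connected reductive group over an algebraically closed field is finite (Popov), so there exists $n\geq 1$ such that $nD$ is principal: $nD = \div(h)$ for some regular function $h\colon G_k \to \AA^1$. The zero locus of $h$ is $D$, which is $E$-stable; since $E$ is connected and smooth, Lemma \ref{lemma-zero-eigenfunction} applies and yields that $h$ is an $E$-eigenfunction for some character $\lambda$ of $E$. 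Every character of $E$ is trivial on the unipotent radical $R_{\mathrm{u}}(P)\times R_{\mathrm{u}}(Q)$ and hence factors through the quotient $E \onto L$, so $\lambda \in X^*(L)$, and $h \in H^0(\GZip^\mu,\Vcal_I(\lambda))$. By Lemma \ref{lem:cd1zpH}, $h$ is a zip partial Hasse invariant for $\Xcal_{w_{0,I}s_\alpha w_0}$.

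\emph{Classification.} Among all zip partial Hasse invariants for the given stratum, pick one, $h_0$, minimizing the positive integer $m_0$ with $\div(h_0) = m_0 D$. Let $h$ be an arbitrary zip partial Hasse invariant for the same stratum, with $\div(h) = n D$ for some $n \geq 1$. Write $n = k m_0 + r$ with $0 \leq r < m_0$. The rational function $h_1 \colonequals h/h_0^k$ has divisor $rD \geq 0$, hence is regular on $G_k$; as a quotient of $E$-eigenfunctions it is again an $E$-eigenfunction. If $r > 0$, then $h_1$ is itself a zip partial Hasse invariant for $\Xcal_{w_{0,I}s_\alpha w_0}$ with $\deg(\div(h_1)) = r \deg(D) < m_0 \deg(D)$, contradicting the minimality of $m_0$. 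Hence $r = 0$, and $h_1$ is a nowhere vanishing regular function on the connected algebraic group $G_k$. By Rosenlicht (\cite[Theorem 3]{Rosenlicht-toroidal}), $h_1 = s\chi$ for some $s \in k^\times$ and $\chi \in X^*(G)$, so $h = s\chi h_0^m$ with $m = k \geq 1$. Conversely, for any $s \in k^\times$, $\chi \in X^*(G)$ and $m \geq 1$, the product $s\chi h_0^m$ is an $E$-eigenfunction (as a product of three) whose vanishing locus is $D$, and hence is a zip partial Hasse invariant by Lemma \ref{lem:cd1zpH}.

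\emph{Main obstacle.} The crux of the argument is existence, which rests on the finiteness (or at least torsion-ness) of $\Pic(G_k)$ for connected reductive $G$ to ensure that some positive multiple of the class of $\Ocal_{G_k}(D)$ is trivial and $D$ becomes principal. Once a single function cutting out $D$ is produced, Lemma \ref{lemma-zero-eigenfunction} and the vanishing of characters on unipotent radicals promote it to a zip partial Hasse invariant, and the classification is a clean combination of Euclidean division on multiplicities with Rosenlicht's description of invertible functions on connected algebraic groups.
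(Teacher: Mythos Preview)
Your proof is correct and follows essentially the same approach as the paper. Both arguments hinge on the finiteness of $\Pic(G_k)$ (the paper cites \cite[Proposition 4.5]{Knop-Kraft-Luna-Vust-Local-properties}) for existence, then use Rosenlicht's theorem for the classification; your Euclidean-division step on multiplicities is exactly the content the paper packages into the phrase ``it is easy to see that any other function \ldots\ is of the form $f h_0^m$''.
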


\begin{proof}
Consider the Zariski closure $Z\colonequals \overline{G}_{w_{0,I}s_\alpha w_0}$. By \cite[Proposition 4.5]{Knop-Kraft-Luna-Vust-Local-properties}, the Picard group $\Pic(G)$ is finite, hence there exists a function $h_0\colon G_k \to \AA^1$ whose divisor is $d_0 [Z]$ for some $d_0\geq 1$. 
Hence there exists $\lambda_0\in X^*(L)$ such that $h_0\in H^0(\GZip^\mu,\Vcal_I (\lambda_0))$ by Lemma \ref{lem:cd1zpH}, 
and it is a zip partial Hasse invariant. Now, take $d_0\geq 1$ minimal for this property. It is easy to see that any other function $G_k \to \AA^1$ with vanishing locus $Z$ is of the form $f h^{m}_0$ where $f\colon G_k \to \GG_{\mathrm{m}}$ is non-vanishing. By \cite[Theorem 3]{Rosenlicht-toroidal}, the function $f$ is of the form $f=s\chi$ for $s\in k^\times$ and $\chi \in X^*(G)$.
\end{proof}

\begin{lemma}\label{lem:XGnvan}
Assume $h$ is a nonzero section of $\Vcal_I (\lambda)$ for some $\lambda\in X^*(G)$. 
Then $h$ is non-vanishing on $G$. 
\end{lemma}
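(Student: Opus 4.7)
The plan is to build from $h$ and $\lambda$ an $E$-invariant regular function on $G_k$ that must be constant by density of the open $E$-orbit $U_\mu$, and then to extract non-vanishing of $h$ from this constancy.

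The key observation is that a character $\lambda \in X^*(G)$ behaves compatibly with the defining relation of the zip group. Since $\lambda$ kills both unipotent radicals $R_{\mathrm{u}}(P)$ and $R_{\mathrm{u}}(Q)$, for $(a,b) \in E$ one has $\lambda(a) = \lambda(\theta^P_L(a))$ and
\[
\lambda(b) = \lambda(\theta^Q_M(b)) = \lambda(\varphi(\theta^P_L(a))) = \lambda(a)^q,
\]
using the Frobenius identity $\lambda \circ \varphi = \lambda^q$ on $L$. I would then form the regular function $f(g) \colonequals h(g)^{q-1} \lambda(g)$ on $G_k$. A direct computation gives $f(agb^{-1}) = \lambda(a)^q \lambda(b)^{-1} f(g)$, which by the identity above reduces to $f(g)$, so $f$ is $E$-invariant.

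Since $G_k$ is irreducible and contains the dense $E$-orbit $U_\mu$, any $E$-invariant regular function on $G_k$ is constant. Because $h$ is nonzero and its zero locus is $E$-stable, $h$ cannot vanish identically on $U_\mu$, so $h(1) \neq 0$ and thus $f \equiv c$ with $c = h(1)^{q-1} \in k^\times$. The identity $h^{q-1} = c \lambda^{-1}$ then holds on all of $G_k$; since $\lambda$ is nowhere vanishing, so is $h^{q-1}$, and therefore so is $h$.

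The main step requiring care is the identity $\lambda \circ \varphi = \lambda^q$ on $L$: this is immediate when $\lambda$ is $\FF_q$-rational (the case arising in applications) but needs a $\sigma$-twisted variant in general. This is not a serious obstacle, since the existence of a nonzero global section already forces the rationality condition $\lambda|_{L_\varphi} = 1$ (Lemma \ref{lem-Umu-sections}), and one can either adapt the exponent used in $f$ or reduce to a setting where the plain identity applies.
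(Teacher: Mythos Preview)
Your approach is essentially identical to the paper's: form $h^{q-1}\lambda$, recognize it as a section of $\Vcal_I(0)$ (i.e., an $E$-invariant regular function on $G_k$), and conclude it is a nonzero constant by density of $U_\mu$. You are in fact more careful than the paper's one-line proof in flagging that the identity $\lambda(b)=\lambda(a)^q$ needs $\lambda$ to be $\sigma$-invariant; your suggestion to adapt the exponent (replace $q-1$ by $q^N-1$ and $\lambda$ by the norm-type character $\prod_{i=0}^{N-1}(\sigma^{-i}\lambda)^{q^i}$, where $\sigma^N$ fixes $\lambda$) is the clean way to handle the general case, whereas the remark about $\lambda|_{L_\varphi}=1$ does not by itself force the needed rationality.
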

\begin{proof}
We can see that $h^{q-1}\lambda$ is a section of $\Vcal_I (0)$. But since the action of $E$ is dense in $G$, any section of $\Vcal_I (0)$ must be constant. Hence we obtain the claim. 
\end{proof}

\begin{corollary}
For a given codimension one stratum and a given $\lambda\in X^*(L)$, there is up to scalar 
at most one zip partial Hasse invariant of weight $\lambda$ for that stratum. 
\end{corollary}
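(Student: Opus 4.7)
The plan is to use Proposition~\ref{prop-set-of-zipPHI} to parametrize all zip partial Hasse invariants for the given stratum, and then show that two such invariants of the same weight must differ by a scalar. Fix a ``minimal'' zip partial Hasse invariant $h_0 \in H^0(\GZip^{\mu},\Vcal_I(\lambda_0))$ as furnished by that proposition, and suppose $h_1, h_2$ are two zip partial Hasse invariants of weight $\lambda$ for the same codimension one stratum. Write $h_i = s_i \chi_i h_0^{m_i}$ with $s_i \in k^\times$, $\chi_i \in X^*(G)$, and $m_i \geq 1$. After swapping $h_1$ and $h_2$ if necessary, we may assume $m_1 \leq m_2$.

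Form the ratio $f \colonequals h_2/h_1 = (s_2/s_1)(\chi_2/\chi_1)\, h_0^{m_2-m_1}$. The character $\chi_2/\chi_1 \in X^*(G)$ is a non-vanishing regular function on $G_k$, and $h_0^{m_2-m_1}$ is regular since $m_2 - m_1 \geq 0$. Moreover $\div(f) = (m_2-m_1)\div(h_0)$ is effective, so by the normality (indeed, smoothness) of $G_k$, $f$ extends to a regular function on all of $G_k$. Since $h_1$ and $h_2$ share the $E$-weight $\lambda$, the function $f$ is $E$-invariant, and therefore corresponds to a global section of $\Ocal_{\GZip^\mu}$.

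Because $G_k$ admits an open dense $E$-orbit (the $\mu$-ordinary stratum $U_\mu$), every $E$-invariant regular function on $G_k$ is constant. Hence $f = c$ for some $c \in k$. If $m_2 > m_1$, then $f$ vanishes along the closure of the prescribed codimension one stratum, forcing $c = 0$ and thus $h_2 \equiv 0$, a contradiction. Therefore $m_1 = m_2$, and the ratio reduces to $(s_2/s_1)(\chi_2/\chi_1)$. A character of $G$ is a constant function only if it is trivial; using that $X^*(G) \hookrightarrow X^*(T)$ is injective since $G$ is connected reductive, we deduce $\chi_1 = \chi_2$, whence $h_2 = (s_2/s_1)\,h_1$, as required.

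The only delicate point is ensuring that $f$ extends to a regular function on $G_k$; this is handled by the effective divisor criterion combined with the normality of $G_k$, and is precisely where the ordering $m_1 \leq m_2$ is used. Beyond that, the argument is a direct application of Rosenlicht-type rigidity already packaged in Proposition~\ref{prop-set-of-zipPHI}.
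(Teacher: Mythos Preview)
Your proof is correct and follows essentially the same approach as the paper. Both arguments use Proposition~\ref{prop-set-of-zipPHI} to write the two invariants as $s_i\chi_i h_0^{m_i}$ and then exploit the open dense $E$-orbit to force $m_1=m_2$ and $\chi_1=\chi_2$; the paper does this by computing weights and invoking Lemma~\ref{lem:XGnvan}, while you form the ratio $h_2/h_1$ directly and observe it is an $E$-invariant regular function, hence constant. These are two phrasings of the same idea, since the proof of Lemma~\ref{lem:XGnvan} itself reduces to the constancy of $E$-invariant functions on $G_k$.
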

\begin{proof}
We use notations in Proposition \ref{prop-set-of-zipPHI}. 
Assume that 
$\chi h_0^m$ and $\chi' h_0^{m'}$ 
have the same weight 
for $\chi,\chi' \in X^*(G)$ and $m \geq m' \geq 1$. By Proposition \ref{prop-set-of-zipPHI}, it suffices to show that $\chi =\chi'$ and $m=m'$. 
Writing $\lambda_0$ for the weight of $h_0$, we have $(1-q)\chi + m\lambda_0 = (1-q)\chi'+m'\lambda_0$. Hence $(m-m')\lambda_0\in X^*(G)$. 
Then we have $m=m'$ by Lemma \ref{lem:XGnvan}. 
Hence we also have $\chi=\chi'$. 
\end{proof}

\subsubsection{Flag partial Hasse invariants}\label{sec-flagPHI}
Similarly, we define flag partial Hasse invariant. Recall that we have a stratification $(\Ccal_w)_{w\in W}$ on $\GF^\mu$. The codimension one strata are $\Ccal_{s_\alpha w_0}$ for $\alpha \in \Delta$. For each $\lambda\in X^*(T)$, we have a line bundle $\Vcal_{\flag}(\lambda)$ on $\GF^\mu$.

\begin{definition}\label{def-flag-PHI}
A flag partial Hasse invariant $h$ is a global section of a line bundle $\Vcal_{\flag}(\lambda)$ over $\GF^\mu$ for $\lambda \in X^*(T)$, whose vanishing locus is the Zariski closure of a codimension one flag stratum $\overline{\Ccal}_{s_\alpha w_0}$. We say that $h$ is a strict flag partial Hasse invariant if furthermore it has multiplicity one.
\end{definition}

A flag partial Hasse invariant identifies with a map $G_k \to \AA^1$ whose vanishing locus is the closure of a $B\times {}^zB$-orbit  of codimension one. Conversely, similarly to the discussion in \S\ref{sec-zipPHI}, any such map $G_k \to \AA^1$ is a flag partial Hasse invariant. Similarly to Proposition \ref{prop-set-of-zipPHI}, we have
\begin{proposition}\label{prop-set-of-flagPHI}
Let $\Ccal_{ s_\alpha w_{0}}$ be a codimension one stratum. There exists a flag partial Hasse invariant $h'_0$ for $\Ccal_{s_\alpha w_{0}}$. Furthermore, if we choose $h'_0$ such that $\deg(\div(h'_0))$ is minimal, the set of all flag partial Hasse invariants for that stratum is given by
\begin{equation}
\{ s \chi h'^{m}_{0} \mid s\in k^\times, \ \chi\in X^*(G), \ m\geq 1 \}.
\end{equation}
\end{proposition}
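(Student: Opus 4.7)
The plan is to mimic the proof of Proposition \ref{prop-set-of-zipPHI} with the stratification on $\GF^\mu$ replacing the one on $\GZip^\mu$. The key inputs are again the finiteness of $\Pic(G)$ (\cite[Proposition 4.5]{Knop-Kraft-Luna-Vust-Local-properties}), an eigenfunction lemma in the spirit of Lemma \ref{lemma-zero-eigenfunction}, and Rosenlicht's theorems on regular non-vanishing functions on algebraic groups (\cite[Theorems 2--3]{Rosenlicht-toroidal}). Throughout I use the presentation $\GF^\mu = [E'\backslash G_k]$ with $E' = E \cap (B\times G_k)$ from \S\ref{subsec-zipflag}, so that flag sections correspond to regular maps $G_k \to \AA^1$ satisfying an $E'$-equivariance law, and the closure of the stratum $\Ccal_{s_\alpha w_0}$ corresponds to the closure $Z \colonequals \overline{C_{s_\alpha w_0}}$ of the $B\times {}^zB$-orbit $C_{s_\alpha w_0} = B s_\alpha w_0 B z^{-1}$ in $G_k$.

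For existence, since $\Pic(G_k)$ is finite, the integers $d$ for which the divisor $d[Z]$ is principal form a nonzero subgroup of $\ZZ$, say $d_0 \ZZ$ with $d_0 \geq 1$. Choose a regular map $h'_0 \colon G_k \to \AA^1$ with $\div(h'_0) = d_0[Z]$. The vanishing locus of $h'_0$ is the $B\times {}^zB$-stable divisor $Z$, so by the argument of Lemma \ref{lemma-zero-eigenfunction} applied to the connected smooth group $B\times {}^zB$ acting on the irreducible normal variety $G_k$, there exist characters $\chi_1, \chi_2$ such that $h'_0(bgb'^{-1}) = \chi_1(b)\chi_2(b')\,h'_0(g)$. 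Restricting to the subgroup $E' \subset B\times {}^zB$ and projecting to the first factor, this identifies $h'_0$ with an element of $H^0(\GF^\mu, \Vcal_{\flag}(\lambda))$ for some $\lambda \in X^*(T)$ whose vanishing locus is exactly $\overline{\Ccal}_{s_\alpha w_0}$, hence a flag partial Hasse invariant.

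For the classification, suppose $h'_0$ is chosen with $d_0$ minimal as above, and let $h$ be any flag partial Hasse invariant for $\Ccal_{s_\alpha w_0}$. Then $\div(h) = d[Z]$ for some $d \geq 1$, and by minimality of $d_0$ (using that the set of $d \in \ZZ$ making $d[Z]$ principal is $d_0\ZZ$) we get $d = m d_0$ for some $m \geq 1$. Thus $f \colonequals h / h'^{m}_0$ is a regular non-vanishing map $G_k \to \GG_{\mathrm{m}}$. By \cite[Theorem 3]{Rosenlicht-toroidal}, $f = s \chi$ for some $s \in k^\times$ and $\chi \in X^*(G)$, so $h = s \chi h'^{m}_0$. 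Conversely, every such $s\chi h'^m_0$ is a flag partial Hasse invariant, since multiplying by $s\chi$ twists the $E'$-weight but does not change the divisor $m d_0 [Z]$ and keeps the section regular. The main delicate point is the eigenfunction step in existence (verifying that $h'_0$ can be upgraded to an $E'$-equivariant section with a weight in $X^*(T)$); however, this is exactly the same argument used in Lemma \ref{lemma-zero-eigenfunction} and in Proposition \ref{prop-set-of-zipPHI}, so no new difficulty arises.
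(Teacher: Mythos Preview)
Your proof is correct and follows essentially the same route as the paper, which simply notes that the argument of Proposition~\ref{prop-set-of-zipPHI} carries over verbatim with $B\times {}^zB$-orbits in place of $E$-orbits. The only point worth making explicit (and which you gesture at with ``projecting to the first factor'') is that the first projection $E'\to B$ is surjective with unipotent kernel $\{1\}\times R_{\mathrm{u}}(Q)$, so $X^*(E')\simeq X^*(B)=X^*(T)$ and the $E'$-eigencharacter of $h'_0$ automatically arises from some $\lambda\in X^*(T)$; this is the flag analogue of the remark in Lemma~\ref{lem:cd1zpH} that every character of $E$ factors through $L$.
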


Again, a flag partial Hasse invariant of weight $\lambda$ is uniquely determined up to scalar by $\alpha\in \Delta$ and $\lambda$. Finally, we note that if $\Pic(G)=0$, then all zip (resp.\ flag) strata admit strict zip (resp.\ flag) Hasse invariants, by the proof of Proposition \ref{prop-set-of-zipPHI}.

\begin{lemma}\label{Lemma-zipflag}
Assume that $P$ is defined over $\FF_q$. If $h\in H^0(\GZip^\mu,\Vcal_I(\lambda))$ (for $\lambda\in X^*(L)$) is a zip partial Hasse invariant, then $\pi^*(h)\in H^0(\GF^\mu,\Vcal_{\flag}(\lambda))$ is a flag partial Hasse invariant.
\end{lemma}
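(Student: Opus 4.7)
The plan is to identify the vanishing locus of $\pi^*(h)$ with the closure of a codimension one flag stratum, using that $\pi$ is smooth with geometrically irreducible fibers isomorphic to $P/B$.

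Since $\lambda\in X^*(L)$, the pullback formula \eqref{pullbackV} yields $\pi^*\Vcal_I(\lambda)=\Vcal_{\flag}(\lambda)$, so $\pi^*(h)$ is naturally a section of $\Vcal_{\flag}(\lambda)$, nonzero because $\pi$ is dominant. By Definition \ref{def-zip-PHI} there exist $\alpha\in \Delta^P$ and an integer $m\geq 1$ with $\div(h)=m\,[\overline{\Xcal}_{w_{0,I}s_\alpha w_0}]$. Since $\pi$ is flat (being smooth), one has $\div(\pi^*(h))=m\,[\pi^{-1}(\overline{\Xcal}_{w_{0,I}s_\alpha w_0})]$, and the task reduces to showing that $Y\colonequals \pi^{-1}(\overline{\Xcal}_{w_{0,I}s_\alpha w_0})$ equals $\overline{\Ccal}_{s_\beta w_0}$ for some $\beta\in\Delta$.

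First I would establish irreducibility of $Y$. Because $\pi$ is smooth of relative dimension $\dim(P/B)$ with all geometric fibers isomorphic to the irreducible variety $P/B$, the preimage of any irreducible closed substack of $\GZip^\mu$ is irreducible of the same codimension in $\GF^\mu$. Applied to $\overline{\Xcal}_{w_{0,I}s_\alpha w_0}$, which is irreducible of codimension one, this yields that $Y$ is irreducible of codimension one in $\GF^\mu$. Next, using the presentations $\GF^\mu=[E'\backslash G_k]$ and $\GZip^\mu=[E\backslash G_k]$ with $E'\subset E$, the substack $Y$ is the $E'$-quotient of the $E$-stable closed subvariety $\overline{G}_{w_{0,I}s_\alpha w_0}\subset G_k$. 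As $B\times{}^zB\subset E$, this subvariety is a union of $B\times{}^zB$-orbits, whence $Y$ is a union of flag strata $\Ccal_w$. Being irreducible, $Y$ coincides with the closure of its unique open flag stratum, which must have dimension $\dim\GF^\mu-1$.

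As the codimension one flag strata are precisely $\Ccal_{s_\beta w_0}$ for $\beta\in\Delta$ (see \S\ref{sec-flagPHI}), this forces $Y=\overline{\Ccal}_{s_\beta w_0}$ for the appropriate $\beta$, concluding the argument. The main obstacle is ensuring the irreducibility of $Y$, which rests on the geometric connectedness of the flag variety $P/B$; identifying $\beta$ explicitly in terms of $\alpha$ is not needed for the lemma, although it would follow from the combinatorics of the induced stratification on $\GF^\mu$.
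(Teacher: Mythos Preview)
Your overall strategy matches the paper's: pass to the cover $G_k$, identify the vanishing locus with the $E$-stable closed set $\overline{G}_{w_{0,I}s_\alpha w_0}$, and argue that this irreducible codimension-one set is the closure of a single $B\times{}^zB$-orbit. The last step, however, contains a genuine error. You assert that $B\times{}^zB\subset E$, but this is false: for instance, $(1,y)\in B\times{}^zB$ lies in $E$ only if $\theta^Q_M(y)=\varphi(1)=1$, i.e.\ only if $y\in R_{\mathrm u}(Q)$. The correct inclusion is $E'\subset B\times{}^zB$, which goes the wrong way for your purpose. As a symptom, your argument never invokes the hypothesis that $P$ is defined over $\FF_q$; and indeed the conclusion can fail without it (in the unitary inert example of the paper with $r\neq s$, the $\mu$-ordinary Hasse invariant $\Ha_\mu$ is a zip partial Hasse invariant whose pullback is \emph{not} a flag partial Hasse invariant).

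The paper closes exactly this gap, and this is where the hypothesis enters: when $P$ is defined over $\FF_q$, a result of Wedhorn (\cite[Corollary~2.15]{Wedhorn-bruhat}) shows that the open $E$-orbit $U_\mu$ is itself $B\times{}^zB$-stable. Its complement $G_k\setminus U_\mu$ is then $B\times{}^zB$-stable, and since $B\times{}^zB$ is connected, so is each irreducible component of this complement. But these components are precisely the closures $\overline{G}_{w_{0,I}s_\alpha w_0}$ of the codimension-one $E$-orbits, which are therefore unions of $B\times{}^zB$-orbits. From that point your irreducibility/codimension argument finishes the proof exactly as you wrote it.
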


\begin{proof}
When $P$ is defined over $\FF_q$, the unique open $E$-orbit in $G$ is stable by $B\times {}^zB$ by \cite[Corollary 2.15]{Wedhorn-bruhat}. Hence, the closure of any $E$-orbit of codimension $1$ coincides with the closure of a $B\times {}^zB$-orbit of codimension one. The result follows immediately.
\end{proof}

\section{Verschiebung homomorphism}
Let $\Mcal$ be a vector bundle on an algebraic  stack (or scheme) $S$ over $\FF_q$, and let $\Mcal^{(q)} \colonequals F_S^*\Mcal$ be the pullback by the absolute Frobenius $F_S \colon S\to S$. In general, there is no natural map $\Mcal\to \Mcal^{(q)}$. However, if $\Acal \to S$ is an abelian variety over a scheme of characteristic $p$, then $\Hcal\colonequals \Hcal^1_{\dR}(\Acal/S)$ admits a Verschiebung map $V\colon \Hcal \to \Hcal^{(q)}$. We will see that certain vector bundles $\Vcal(\rho)$ (for a $P$-representation $(V,\rho)$) on $\GZip^\mu$ are endowed with a Verschiebung homomorphism $\vfr_\Vcal  \colon \Vcal \to \Vcal^{[1]}$, where $\Vcal^{[1]}$ is an analogue of the Frobenius-twist. We avoid the notation "$V$" for the Verschiebung. We will later that partial Hasse invariants admit a factorization in terms of the Verschiebung of certain vector bundles $\Vcal(\rho)$.

\subsection{\texorpdfstring{Verschiebung homomorphism for $L$-representations}{}}\label{subsec-verschiebung}

Assume that $P$ is defined over $\FF_q$. For a $P$-representation $(V,\rho)$, denote by $(V^{[1]},\rho^{[1]})$ the representation such that $V^{[1]}=V$ and
\begin{equation}\label{equ-rho1}
  \rho^{[1]} \colon  P \xrightarrow{\varphi} P \xrightarrow{\rho}\GL(V)
\end{equation}
where $\varphi$ denotes the Frobenius homomorphism of $P$. If $\Vcal=\Vcal(\rho)$ denotes the vector bundle attached to $\rho$, we sometimes write $\Vcal^{[1]} \colonequals \Vcal(\rho^{[1]})$. 

Recall (Theorem \ref{fullfaith}) that for any two $L$-representations $(V_1,\rho_1)$, $(V_2,\rho_2)$, there is a bijection between morphisms of vector bundles $\Vcal(\rho_1)\to \Vcal(\rho_2)$ on $\GZip^\mu$ and maps $F_{\mathrm{MF}} (\rho_1)\to F_{\mathrm{MF}} (\rho_2)$ of $\Delta^P$-filtered $L_{\varphi}$-modules. If $V=\bigoplus_{\nu}V_\nu$ denotes the weight decomposition of $V$, then one has $V^{[1]}=\bigoplus_{\nu}V^{[1]}_{q\sigma^{-1}\nu}$ and $V^{[1]}_{q\sigma^{-1}\nu}=V_\nu$. Recall from \S\ref{sec-Lreps} that for $r\in \RR$ and $\alpha\in \Delta^P$, the filtration $V^{\alpha}_{\geq r}$ is defined as the direct sum of $V_\nu$ for all $\nu$ satisfying $\langle \nu,\delta_{\alpha} \rangle \geq r$. Therefore, we find for all $\alpha \in \Delta^P$ and all $r\in \RR$:
\begin{equation}\label{eqV1n}
 \left( V^{[1]} \right)^{\alpha}_{\geq r} = V^{\sigma\alpha}_{\geq \frac{r}{q}}.
\end{equation}

Let $\rho\in \Rep(L)$ be an arbitrary $L$-representation. Similarly to the subspace $V_{\geq 0}^{\Delta^P}$ (see \eqref{equ-VDeltaP}), define a subspace $V_{\GS}\subset V$ as follows. Let $V=\bigoplus_{\nu\in X^*(T)} V_\nu$ be the $T$-weight decomposition of $V$. Define $V_{\GS}$ as the direct sum of the weight spaces $V_\nu$ such that $\langle \nu,\alpha^\vee \rangle \leq 0$ for all $\alpha\in \Delta^P$.

\begin{lemma}\label{lem-GS}
If $\sigma \alpha =\alpha$ for all $\alpha\in \Delta^P$, then $V_{\GS} = V_{\geq 0}^{\Delta^P}$.
\end{lemma}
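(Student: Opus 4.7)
The plan is to reduce the lemma to a direct computation of $\delta_\alpha$ under the extra $\sigma$-invariance hypothesis, mimicking the remark made in the paper right after \eqref{equ-VDeltaP} in the split case. The claim is purely about which weight spaces $V_\nu$ are selected by the two conditions, so it suffices to show that for every $\alpha\in\Delta^P$ one has the equivalence $\langle\nu,\delta_\alpha\rangle\geq 0\iff\langle\nu,\alpha^\vee\rangle\leq 0$ of conditions on $\nu\in X^*(T)$.

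First, I would observe that the hypothesis $\sigma\alpha=\alpha$ for $\alpha\in\Delta^P$ implies $\sigma\alpha^\vee=\alpha^\vee$, since $\sigma$ preserves the duality pairing between roots and coroots (it acts on $X^*(T)$ and $X_*(T)$ compatibly). Next, I would solve the equation $\wp_*(\delta)=\alpha^\vee$ in $X_*(T)_\RR$ by looking for a $\sigma$-fixed solution: if $\sigma(\delta)=\delta$, then $\wp_*(\delta)=\delta-q\sigma(\delta)=(1-q)\delta$, and the equation $(1-q)\delta=\alpha^\vee$ yields $\delta=-\alpha^\vee/(q-1)$. Since $\wp_*$ is an isomorphism of $X_*(T)_\RR$ by \eqref{equ-Pupstar}, this is the unique solution, so
\[
\delta_\alpha=-\frac{\alpha^\vee}{q-1}\quad\text{for all }\alpha\in\Delta^P.
\]

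From this explicit formula the equivalence is immediate: for any weight $\nu$ and any $\alpha\in\Delta^P$,
\[
\langle\nu,\delta_\alpha\rangle=-\frac{1}{q-1}\langle\nu,\alpha^\vee\rangle,
\]
and since $q-1>0$, the sign is reversed. Thus $\langle\nu,\delta_\alpha\rangle\geq 0$ for all $\alpha\in\Delta^P$ if and only if $\langle\nu,\alpha^\vee\rangle\leq 0$ for all $\alpha\in\Delta^P$. Comparing with the definitions of $V_{\geq 0}^{\Delta^P}$ in \eqref{equ-VDeltaP} and of $V_{\GS}$ gives the claimed equality.

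There is no real obstacle here; the only point to be a little careful about is that the equation $\wp_*(\delta)=\alpha^\vee$ must be solved in $X_*(T)_\RR$ (not in $X_*(T)$), so that dividing by $q-1$ is legitimate, and that the uniqueness of $\delta_\alpha$ comes from $\wp_*$ being an $\RR$-linear isomorphism. Once the formula $\delta_\alpha=-\alpha^\vee/(q-1)$ is established under the $\sigma$-invariance hypothesis, the rest is a one-line comparison of inequalities.
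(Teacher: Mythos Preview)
Your proof is correct and follows essentially the same approach as the paper: the paper's proof simply asserts that if $\sigma\alpha=\alpha$ then $\delta_\alpha=-\alpha^\vee/(q-1)$ (referring to an earlier paper for this observation), and you have supplied the short verification of this formula together with the resulting comparison of the two defining inequalities. There is nothing to add.
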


\begin{proof}
This was already pointed out in \cite{Imai-Koskivirta-vector-bundles} (sentence after equation (3.4.3)). It follows from the fact that if $\sigma \alpha = \alpha$, then $\delta_\alpha = -\frac{\alpha^\vee}{q-1}$.
\end{proof}

\begin{proposition}\label{propver}
Let $(V,\rho)$ be an $L$-representation such that $V_{\GS}=V$. Then the identity map $\id \colon V\to V=V^{[1]}$ is a morphism of  $\Delta^P$-filtered $L_{\varphi}$-modules $F_{\mathrm{MF}} (\rho)\to F_{\mathrm{MF}}(\rho^{[1]})$, hence induces a morphism of vector bundles
\begin{equation}\label{VerVB}
\vfr_\rho \colon \Vcal(\rho) \to \Vcal(\rho^{[1]}).
\end{equation}
We call this map the Verschiebung homomorphism of $\Vcal(\rho)$.
\end{proposition}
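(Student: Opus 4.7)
The plan is to invoke Theorem \ref{fullfaith}, which gives an equivalence between the category $\VB_L(\GZip^\mu)$ and $L_{\varphi}\mathchar`-\mathrm{MF}_{\Delta^P}^{\mathrm{adm}}$ via the functor $F_{\mathrm{MF}}$. Since both $\rho$ and $\rho^{[1]}$ are $L$-representations, their images $F_{\mathrm{MF}}(\rho)$ and $F_{\mathrm{MF}}(\rho^{[1]})$ are admissible, and the problem reduces to verifying two things about the set-theoretic identity map $\id\colon V\to V$: that it is $L_{\varphi}$-equivariant with respect to the representations $\rho$ and $\rho^{[1]}$, and that it carries $V^{\alpha}_{\geq r}$ into $(V^{[1]})^{\alpha}_{\geq r}$ for every $\alpha\in\Delta^P$ and $r\in\RR$.

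First I would handle $L_\varphi$-equivariance. Since $P$ is defined over $\FF_q$, Lemma~\ref{lemLphi}\eqref{lemLphi-item3} gives $L_{\varphi}=L(\FF_q)$ (as a constant group scheme). For every $x\in L(\FF_q)$ one has $\varphi(x)=x$, so $\rho^{[1]}(x)=\rho(\varphi(x))=\rho(x)$. Thus $\id$ intertwines $\rho|_{L_{\varphi}}$ and $\rho^{[1]}|_{L_{\varphi}}$ trivially.

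The substantive step is the filtration check. By \eqref{eqV1n} we have $(V^{[1]})^{\alpha}_{\geq r}=V^{\sigma\alpha}_{\geq r/q}$, so it suffices to show the inclusion $V^{\alpha}_{\geq r}\subset V^{\sigma\alpha}_{\geq r/q}$ for each $\alpha\in\Delta^P$ and $r\in\RR$. The isomorphism $\wp_*$ of \eqref{equ-Pupstar} commutes with $\sigma$, since for any $\delta\in X_*(T)_\RR$ one has $\wp_*(\sigma\delta)=\sigma\delta-q\sigma^2\delta=\sigma(\wp_*\delta)$; applying this to $\delta=\delta_\alpha=\wp_*^{-1}(\alpha^\vee)$ gives the identity $\sigma(\delta_\alpha)=\delta_{\sigma\alpha}$. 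The defining relation of $\delta_\alpha$ then becomes
\[
\alpha^\vee=\delta_\alpha-q\sigma(\delta_\alpha)=\delta_\alpha-q\delta_{\sigma\alpha}.
\]
For any $T$-weight $\nu$ of $V$, pairing with $\nu$ yields $\langle\nu,\alpha^\vee\rangle=\langle\nu,\delta_\alpha\rangle-q\langle\nu,\delta_{\sigma\alpha}\rangle$. The hypothesis $V=V_{\GS}$ says precisely that $\langle\nu,\alpha^\vee\rangle\leq 0$ for every $\alpha\in\Delta^P$, hence $\langle\nu,\delta_\alpha\rangle\leq q\langle\nu,\delta_{\sigma\alpha}\rangle$. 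Therefore $V_\nu\subset V^\alpha_{\geq r}$ forces $\langle\nu,\delta_{\sigma\alpha}\rangle\geq r/q$, i.e.\ $V_\nu\subset V^{\sigma\alpha}_{\geq r/q}$, giving the desired inclusion.

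The only subtle point is the sign comparison between the two filtrations, which reduces to the elementary inequality above; the assumption $V_{\GS}=V$ is exactly what makes it work. Combining equivariance and filtration compatibility shows that $\id$ is a morphism in $L_{\varphi}\mathchar`-\mathrm{MF}_{\Delta^P}^{\mathrm{adm}}$, and Theorem~\ref{fullfaith} then produces the associated morphism of vector bundles $\vfr_\rho\colon\Vcal(\rho)\to\Vcal(\rho)^{[1]}$ on $\GZip^\mu$.
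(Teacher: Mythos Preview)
Your proof is correct and follows essentially the same approach as the paper's own proof: verify $L(\FF_q)$-equivariance of $\id$ (trivially, since $\varphi$ fixes $L(\FF_q)$), then use the identity $\alpha^\vee=\delta_\alpha-q\delta_{\sigma\alpha}$ together with the hypothesis $\langle\nu,\alpha^\vee\rangle\leq 0$ to deduce the filtration inclusion $V^{\alpha}_{\geq r}\subset V^{\sigma\alpha}_{\geq r/q}$. If anything, you are slightly more explicit than the paper in justifying $\sigma(\delta_\alpha)=\delta_{\sigma\alpha}$ via the $\sigma$-equivariance of $\wp_*$.
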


\begin{proof}
Since $\rho^{[1]}=\rho \circ \varphi$, it is clear that $\id$ is $L(\FF_q)$-equivariant. We show that it is compatible with $\alpha$-filtrations. Using \eqref{eqV1n}, we must prove $V ^{\alpha}_{\geq r} \subset  \left( V^{[1]} \right)^{\alpha}_{\geq r}=V^{\sigma\alpha}_{\geq \frac{r}{q}}$ for all $\alpha\in \Delta^P$. Since $\delta_\alpha=\wp_*^{-1}(\alpha^\vee)$, we have $\alpha^{\vee}=\wp_*(\delta_{\alpha})=\delta_{\alpha} -q \delta_{\sigma\alpha}$ (see \ref{subsec-global-sections}). 
Suppose that $\langle \nu,\delta_\alpha \rangle \geq r$ for a weight $\nu$. We have $q \langle \nu, \delta_{\sigma \alpha} \rangle =\langle \nu,\delta_\alpha \rangle -\langle \nu, \alpha^\vee \rangle$ and  $\langle \nu, \alpha^\vee \rangle\leq 0$ by assumption. Hence $\langle \nu, \delta_{\sigma \alpha} \rangle \geq \frac{r}{q}$. This shows that $V ^{\alpha}_{\geq r} \subset V^{\sigma\alpha}_{\geq \frac{r}{q}}$ and terminates the proof. 
\end{proof}

\begin{rmk}
On the $\mu$-ordinary locus, one has $\Vcal(\rho)|_{\Ucal_\mu}=\Vcal(\rho^{[1]})|_{\Ucal_\mu}$, since $\rho^{[1]}|_{L(\FF_q)}=\rho|_{L(\FF_q)}$. Proposition \ref{propver} shows that this isomorphism $\Vcal(\rho)|_{\Ucal_\mu} \to \Vcal(\rho^{[1]})|_{\Ucal_\mu}$ extends to a map of vector bundles $\vfr_\rho \colon \Vcal(\rho) \to \Vcal(\rho^{[1]})$ over $\GZip^\mu$ (which need not be an isomorphism). 
\end{rmk}

Recall the Griffiths--Schmid cone (\cite[Definition 1.8.1]{Koskivirta-automforms-GZip}) $C_{\GS}$, defined as follows:

\begin{definition}\label{GSdef}
Let $C_{\GS}$ denote the set of characters $\lambda\in X^*(T)$ satisfying
\begin{align}
\langle \lambda, \alpha^\vee \rangle &\geq 0 \ \textrm{ for }\alpha\in I, \\
\langle \lambda, \alpha^\vee \rangle &\leq 0 \ \textrm{ for }\alpha\in \Phi_+ \setminus \Phi_{L,+}. 
\end{align}
\end{definition}

\begin{lemma}\label{lemma-cone-GS}
A character $\lambda$ is in $C_{\GS}$ if and only if $-w_{0,I}\lambda$ is $G$-dominant.
\end{lemma}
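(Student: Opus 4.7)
The plan is to translate the $G$-dominance of $-w_{0,I}\lambda$ into conditions on $\lambda$ using the involution $w_{0,I}$ and the contragredient action on coroots. First I would note that $w_{0,I}$ is an involution (being the longest element of $W_L$), and by $W$-equivariance of the canonical pairing one has $\langle -w_{0,I}\lambda, \alpha^\vee\rangle = -\langle \lambda, w_{0,I}\alpha^\vee\rangle$ for every $\alpha \in \Phi$. Hence the condition that $-w_{0,I}\lambda$ be $G$-dominant is equivalent to $\langle \lambda, w_{0,I}\alpha^\vee\rangle \leq 0$ for all $\alpha \in \Phi_+$.

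Next I would split $\Phi_+ = \Phi_{L,+} \sqcup (\Phi_+\setminus \Phi_{L,+})$ and analyze how $w_{0,I}$ acts on each piece. As the longest element of $W_L$, it sends $\Phi_{L,+}$ bijectively onto $-\Phi_{L,+}$, so the condition restricted to $\alpha\in \Phi_{L,+}$ becomes $\langle \lambda, \beta^\vee\rangle \geq 0$ for all $\beta \in \Phi_{L,+}$, which is equivalent to the first condition $\langle \lambda, \alpha^\vee\rangle \geq 0$ for $\alpha \in I$ defining $C_{\GS}$. On the complement $\Phi_+\setminus \Phi_{L,+}$, which consists of the $T$-weights of the Lie algebra of the unipotent radical of the parabolic opposite to $P$, the Levi Weyl group $W_L$ acts by permutations, so $w_{0,I}$ stabilizes $\Phi_+\setminus \Phi_{L,+}$. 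Therefore the condition restricted to this complement becomes $\langle \lambda, \beta^\vee\rangle \leq 0$ for all $\beta \in \Phi_+\setminus \Phi_{L,+}$, which is the second condition defining $C_{\GS}$.

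The only mild subtlety I anticipate is the stability of $\Phi_+\setminus \Phi_{L,+}$ under $W_L$. This is a standard property of parabolic subgroups: each simple reflection $s_\alpha$ with $\alpha \in I$ modifies any root by an element of $\ZZ\alpha \subset \ZZ\Phi_L$, so it preserves membership in $\Phi_L$; and if the original root lies outside $\Phi_L$, the modification cannot flip positivity (that would require the reflected root itself to lie in $\Phi_L$). Iterating, $W_L$ stabilizes $\Phi_+\setminus \Phi_{L,+}$, and combining the two piecewise conclusions above yields the desired equivalence $\lambda \in C_{\GS} \iff -w_{0,I}\lambda \text{ is } G\text{-dominant}$.
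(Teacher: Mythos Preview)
Your proof is correct and follows essentially the same approach as the paper's one-line argument, which simply invokes the two facts you establish in detail: that $w_{0,I}$ preserves $\Phi_+\setminus\Phi_{L,+}$ and swaps $I$ with $-I$ (equivalently, $\Phi_{L,+}$ with $-\Phi_{L,+}$). Your version is an expanded form of the same reasoning, with the added benefit of spelling out why $W_L$ stabilizes $\Phi_+\setminus\Phi_{L,+}$.
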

\begin{proof}
This follows from the fact that $w_{0,I}$ preserves $\Phi_+ \setminus \Phi_{L,+}$ and swaps $I$ and $-I$.
\end{proof}

The conditions defining $C_{\GS}$ were first understood by Griffiths--Schmid (\cf \cite[p.275]{Griffiths-Schmid-homogeneous-complex-manifolds}) in a relation with Griffiths--Schmid manifolds, a generalization of Shimura varieties to non-minuscule cocharacters. The following is \cite[Proposition 3.7.5(1)]{Koskivirta-automforms-GZip} (where the notation $V(\lambda)_{\leq 0}$ was used instead of $V_I(\lambda)_{\GS}$):

\begin{lemma} \label{GS-Deltapos}
Assume $\lambda\in C_{\GS}$. Then one has $V_I(\lambda)_{\GS}=V_I(\lambda)$. In particular, for all $\lambda \in C_{\GS}$, the vector bundle $\Vcal_I(\lambda)$ admits a Verschiebung homomorphism.
\end{lemma}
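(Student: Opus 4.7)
The plan is to reduce the claim to a computation at the vertices of the convex hull of $W_L\cdot\lambda$. The second assertion follows immediately from the first together with Proposition \ref{propver}, so the content is to establish the inclusion of weights $V_I(\lambda)=V_I(\lambda)_{\GS}$.

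First, I would recall that since $\lambda$ is $I$-dominant (by the first defining condition of $C_{\GS}$), the Weyl module $V_I(\lambda)=\Ind_{B_L}^L(\lambda)$ is nonzero and has the usual character: its multiset of $T$-weights is $W_L$-invariant and is supported on the convex hull of $W_L\cdot\lambda$ inside $X^*(T)_{\RR}$. Concretely, any weight $\nu$ of $V_I(\lambda)$ can be written as a convex combination $\nu=\sum_{w\in W_L} c_w\, w\lambda$ with $c_w\geq 0$ and $\sum_w c_w=1$. Hence for each $\alpha\in \Delta^P$,
\begin{equation*}
 \langle \nu, \alpha^\vee\rangle = \sum_{w\in W_L} c_w \langle w\lambda, \alpha^\vee\rangle = \sum_{w\in W_L} c_w \langle \lambda, (w^{-1}\alpha)^\vee\rangle,
\end{equation*}
so it suffices to show $\langle \lambda, (w^{-1}\alpha)^\vee\rangle\leq 0$ for every $w\in W_L$ and every $\alpha\in \Delta^P$.

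Next I would establish the key combinatorial fact that the set $\Phi_+\setminus \Phi_{L,+}$ is stable under $W_L$. Because $W_L$ is generated by the simple reflections $s_\beta$ for $\beta\in I$, it suffices to show this for such a $s_\beta$. For $\gamma\in \Phi_+\setminus \Phi_{L,+}$ one has $s_\beta(\gamma)=\gamma-\langle \gamma,\beta^\vee\rangle\beta$: this is positive because the only positive root sent to a negative root by $s_\beta$ is $\beta$ itself, and $\gamma\neq \beta$ (as $\gamma\notin \Phi_L$); it also remains outside $\Phi_L$ since $s_\beta$ preserves $\Phi_L$. Applied to $\alpha\in\Delta^P\subset \Phi_+\setminus \Phi_{L,+}$, this gives $w^{-1}\alpha\in \Phi_+\setminus \Phi_{L,+}$ for every $w\in W_L$. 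By the second defining condition of $C_{\GS}$, we deduce $\langle \lambda, (w^{-1}\alpha)^\vee\rangle\leq 0$, which combined with the convex combination above proves $\langle \nu, \alpha^\vee\rangle\leq 0$. Hence $\nu$ lies in $V_I(\lambda)_{\GS}$, so $V_I(\lambda)_{\GS}=V_I(\lambda)$.

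The main obstacle, if any, is being careful about conventions: since the paper defines $\Phi_+$ via the Borel $B^+$ opposite to $B$, one must verify that $\lambda$ is indeed the highest weight of $V_I(\lambda)=\Ind_{B_L}^L(\lambda)$ with respect to this choice and that the $W_L$-orbit/convex-hull description of weights applies verbatim (this is fine because the Weyl character formula and the support of the Weyl module are characteristic-free). Everything else is formal: $W_L$-invariance of $\Phi_+\setminus \Phi_{L,+}$ is the only nontrivial step, and once it is in place the conclusion is a one-line convexity argument. Finally, invoking Proposition \ref{propver} with $\rho=\rho_{I,\lambda}$ yields the Verschiebung homomorphism $\vfr_{\rho_{I,\lambda}}\colon \Vcal_I(\lambda)\to \Vcal_I(\lambda)^{[1]}$, completing the proof.
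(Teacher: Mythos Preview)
Your argument is correct. The reduction to the vertices of the weight polytope via convexity, combined with the $W_L$-stability of $\Phi_+\setminus\Phi_{L,+}$, yields the claim cleanly; the invocation of Proposition~\ref{propver} for the second assertion is exactly right.

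The paper itself does not give a proof: it simply cites \cite[Proposition~3.7.5(1)]{Koskivirta-automforms-GZip}. So there is no internal argument to compare against. Your approach is a natural and self-contained one; the only ingredients are the characteristic-free Weyl character (ensuring the weights of $V_I(\lambda)$ coincide with those in characteristic zero and hence lie in $\mathrm{Conv}(W_L\cdot\lambda)$) and the elementary observation that each $s_\beta$ with $\beta\in I$ permutes $\Phi_+\setminus\Phi_{L,+}$. Your caveat about conventions is well taken but harmless here: with $\Phi_+$ defined via $B^+$, the highest weight of $\Ind_{B_L}^L(\lambda)$ is indeed $\lambda$, so the convex-hull description applies verbatim.
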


\begin{example}\label{exabvar}
Set $G\colonequals\GL_{2n,\FF_q}$, endowed with the cocharacter $\mu\colon \GG_{\mathrm{m}}\to G;\ x\mapsto \diag(xI_n,I_n)$. Let $P,L,Q$ be the subgroups of $G$ attached to $\mu$. Let $(u_1,\dots ,u_{2n})$ be the canonical basis of $V_0=k^{2n}$ and set $V_{0,P}=\Span_k(u_{n+1}, \dots , u_{2n})$ (hence $P$ is the stabilizer of $V_{0,P}$). Let $T\subset L$ be the diagonal torus and $B\subset G$ the lower-triangular Borel subgroup. Identify $X^*(T)=\ZZ^{2g}$ such that $(a_1, \dots ,a_{2g})$ corresponds to the character $\diag(x_1, \dots ,x_{2g})\mapsto \prod_{i=1}^{2g}x_i^{a_i}$. Write also $(e_1, \dots ,e_{2g})$ for the canonical basis of $\ZZ^{2g}$. In this case, a $G$-zip of type $\mu$ over an $\FF_p$-scheme $S$ is equivalent a tuple $(\Mcal,\Omega, F,V)$ satisfying the following:
\begin{enumerate}
\setlength{\leftmargin}{18pt}
     \setlength{\rightmargin}{0pt}
     \setlength{\itemindent}{0pt}
     \setlength{\labelsep}{5pt}
     \setlength{\labelwidth}{13pt}
     \setlength{\listparindent}{\parindent}
     \setlength{\parsep}{0pt}
     \setlength{\itemsep}{\itemsepamount}
     \setlength{\topsep}{\topsepamount}
    \item[\textnormal{(i)}] $\Mcal$ is a locally free $\Ocal_S$-module of rank $2n$ and $\Omega\subset \Mcal$ is a locally free, locally direct factor $\Ocal_S$-submodule of rank $n$.
    \item[\textnormal{(ii)}] $F \colon \Mcal^{(q)}\to \Mcal$ and $V \colon \Mcal\to \Mcal^{(q)}$ are maps of vector bundles satisfying $\Im(F)=\Ker(V)$ and $\Im(V)=\Ker(F)=\Omega^{(q)}$.
\end{enumerate}
Let $f\colon A\to S$ be an abelian scheme of rank $g$ over an $\FF_q$-scheme $S$. We can attach to $A$ a $G$-zip by setting $\Mcal=H^1_{\dR}(A/S)$ and $\Omega$ as the Hodge filtration of $\Mcal$. The Frobenius and Verschiebung maps of $A$ give rise to similar maps on $\Mcal$. Thus, we get a map of stacks 
$$\zeta \colon S\to \GZip^\mu.$$
The $P$-representation $V_{0,P}$ has highest weight $\lambda_\Omega=-e_n$, which lies in $C_{\GS}$. Hence, $\Vcal_I(\lambda_\Omega)$ admits a Verschiebung $\vfr_\Omega \colon  \Vcal_I(\lambda_\Omega) \to \Vcal_I(\lambda_\Omega^{[1]})$ by Lemma \ref{GS-Deltapos}. The pullback by $\zeta$ of this map coincides with the Verschiebung $V \colon \Omega \to \Omega^{(p)}$.
\end{example}

\subsection{\texorpdfstring{Twisting homomorphism on $\Sbt$}{}}

We show that the Verschiebung map constructed in the previous section arises from a similar map the stack $\Sbt$.

We start with a connected, reductive group $G$ over $k$, a Borel pair $(B,T)$ such that $T\subset B \subset G$ and a parabolic subgroup $P$ containing $B$. Write $L$ for the unique Levi subgroup of $P$ containing $T$. Put $z_1=w_{0,I} w_0$. Let $V$ be an $L$-representation, which we view as a $P$-representation trivial on $R_{\mathrm{u}}(P)$. Note that $z_1^{-1}B_L z_1 \subset B$. Define a $B$-representation $\rho[z_1]$ as follows: First, for $b\in z_1^{-1}B_L z_1$, put $\rho[z_1](b)=\rho(z_1 b z_1^{-1})$. Then, extend $\rho[z_1]$ to a $B$-representation trivial on $R_{\mathrm{u}}(z_1^{-1} L z_1)$. We may consider the $B\times B$-representations $(\rho,\rho_0)$ and $(\rho_0,\rho[z_1])$, both with underlying vector space $V$. Put $u_\rho\colonequals \rho(w_{0,I})$, viewed as a map $u_\rho \colon V\to V$. It is easy to see that $u_\rho$ is $S_{w_0}$-equivariant (see \ref{sec-glob-sec-Sbt} for the definition of $S_{w_0}$). Indeed, for all $t\in T$, and $x\in V$ we have
\begin{align*}
   \left( (t,w_0 t w_0)\cdot u_\rho \right) (x) 
   &= \rho[z_1](w_0 t w_0) \rho(w_{0,I}) (\rho(t)^{-1}x) \\ 
   &= \rho(w_{0,I}tw_{0,I}^{-1})\rho(w_{0,I})\rho(t)^{-1}x = u_\rho (x).
\end{align*}
Hence, $u_\rho$ defines a morphism of vector bundles over the open substack $\Sbt_{w_0}\subset \Sbt$:
\begin{equation}\label{u-rho-map}
u_\rho \colon \Vcal_{\Sbt}(\rho, \rho_0)|_{\Sbt_{w_0}} \to \Vcal_{\Sbt}(\rho_0, \rho[z_1])|_{\Sbt_{w_0}}.
\end{equation}

\begin{theorem}\label{urho-theorem}
Assume that $V=V_{\GS}$. Then $u_\rho$ extends to a map of vector bundles $u_\rho  \colon \Vcal_{\Sbt}(\rho, \rho_0) \to \Vcal_{\Sbt}(\rho_0, \rho[z_1])$ over $\Sbt$.
\end{theorem}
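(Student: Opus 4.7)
The plan is to verify the extension criterion for morphisms of vector bundles on $\Sbt$ spelled out immediately after Proposition \ref{prop-Sbt-glob}. Since the $S_{w_0}$-equivariance of $u_\rho$ is already established, the map $u_\rho$ defines a morphism over the open stratum $\Sbt_{w_0}$, and only regularity across each codimension-one stratum $\Sbt_{s_\alpha w_0}$ ($\alpha\in\Delta$) remains. As in the derivation preceding Proposition \ref{prop-Sbt-glob}, I would approach $\Sbt_{s_\alpha w_0}$ via the family $\phi_\alpha(A(t))w_0$, rewrite it as $b_1 w_0 b_2^{-1}$ using \eqref{At}, and apply $(B\times B)$-equivariance to get an explicit rational expression in $t$ for $\phi(\phi_\alpha(A(t))w_0)$. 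This would take the form of a product involving $\rho[z_1](u_{w_0\alpha}(\cdot))$, $\rho[z_1](\alpha^\vee(t))$, $u_\rho$, and $\rho(u_{-\alpha}(t^{-1}))$. The task is then to show regularity at $t=0$ case-by-case on $\alpha$.

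In the case $\alpha\in\Delta^P$, the root $w_0\alpha$ does not lie in $z_1^{-1}\Phi_L$, so $U_{w_0\alpha}$ sits in the unipotent complement of $z_1^{-1}B_L z_1$ in $B$, on which $\rho[z_1]$ was extended trivially. The $\rho[z_1](u_{w_0\alpha}(\cdot))$ factor collapses to the identity, and the remaining expression applied to a weight vector $v\in V_\nu$ becomes a series whose $t$-exponents are $-\langle \nu,\alpha^\vee\rangle+j$ for $j\ge 0$. The hypothesis $V=V_{\GS}$ forces $\langle \nu,\alpha^\vee\rangle\le 0$ for $\alpha\in\Delta^P$, so every exponent is $\ge 0$ and regularity is automatic.

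In the case $\alpha\in I$, set $\beta\colonequals w_{0,I}\alpha\in -I\subset\Phi_L$. The conjugations $z_1 u_{w_0\alpha}(x) z_1^{-1}=u_\beta(c_\alpha x)$ and $w_{0,I}u_{-\alpha}(x)w_{0,I}^{-1}=u_{-\beta}(c' x)$ for Chevalley constants $c_\alpha,c'\in k^\times$ allow one to push $u_\rho=\rho(w_{0,I})$ through the expression. After reassembly, the result takes the clean form $\rho(\phi_\beta(M(t)))\cdot u_\rho$ where
\[
M(t)=\begin{pmatrix} (1-c_\alpha c'/c_1)\,t^{-1} & -c_\alpha/c_1 \\ c' & t \end{pmatrix}\in\SL_2,
\]
with $c_1$ the Chevalley constant defined by $u_\alpha(x)w_0=w_0 u_{w_0\alpha}(c_1^{-1}x)$. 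The cocycle relation $\epsilon_{w_1w_2,\gamma}=\epsilon_{w_1,w_2\gamma}\epsilon_{w_2,\gamma}$ combined with the Chevalley identity $\epsilon_{w,\alpha}\epsilon_{w,-\alpha}=1$ forces $c_\alpha c'=c_1$, so the singular $(1,1)$-entry of $M(t)$ vanishes; then $M(t)$ extends to a regular morphism $\AA^1\to\SL_2$, and $\rho(\phi_\beta(M(t)))u_\rho$ is regular at $t=0$.

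The main obstacle is the Chevalley-constant identity $c_\alpha c'=c_1$ in the case $\alpha\in I$; once granted, the $\SL_2$-picture makes the extension transparent. Alternatively, one can bypass the matrix computation and instead expand everything into divided powers $\tilde E^{(j)}_{\pm\beta}$, reducing the extension criterion to the characteristic-free binomial identity $\sum_k(-1)^k\binom{A+k}{A}\binom{N}{k+\sigma}=[y^\sigma](1+y)^{N-A-1}$, which vanishes in the required range.
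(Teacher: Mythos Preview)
Your approach is genuinely different from the paper's and worth comparing. The paper does \emph{not} verify the extension criterion of \S\ref{sec-glob-sec-Sbt} directly. Instead it chooses an auxiliary $\FF_q$-structure on $G$, $B$, $T$, $P$, forms the zip datum with $z=z_1$, and invokes Proposition~\ref{propver} (already proved via Theorem~\ref{fullfaith}) to obtain the Verschiebung $\vfr_\rho$ on $\GZip^\mu$. The extension over $\Sbt$ is then obtained by showing that $\pi^*\vfr_\rho$ descends along $\psi$: one writes the associated function $G_k\to\Hom(V,V)$ explicitly and checks $(B\times B)$-equivariance on the dense set $BLz_1B$ (Proposition~\ref{prop-w0I-Ver}). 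No case split on $\alpha$ and no structure-constant bookkeeping is needed; the hypothesis $V=V_{\GS}$ is used only through Proposition~\ref{propver}. Your route, by contrast, is a hands-on verification at each codimension-one wall; for $\alpha\in\Delta^P$ it is clean and transparent, and it shows very concretely where $V=V_{\GS}$ enters.

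There is, however, a gap in your $\alpha\in I$ case. Using the cocycle for the length-additive factorization $w_0=w_{0,I}\cdot z_1$ gives $c_\alpha=c_1/c_{w_{0,I},\beta}$, so $c_\alpha c'=c_1$ reduces to $c_{w_{0,I},-\alpha}=c_{w_{0,I},\beta}$ with $\beta=w_{0,I}\alpha$. Your identity $\epsilon_{w,\gamma}\epsilon_{w,-\gamma}=1$ turns this into $c_{w_{0,I},-\alpha}\,c_{w_{0,I},-\beta}=1$, which is \emph{not} tautological: unwinding, it is equivalent to $\beta(\dot w_{0,I}^2)=1$. This last equality is true, but it requires an extra argument (e.g.\ $\langle\beta,2\rho_L^\vee\rangle=-2$ is even, and $\dot w_{0,I}^2$ acts on root subgroups through the parity of this pairing); it does not drop out of the two facts you cite. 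You correctly flag this step as ``the main obstacle'', but your one-line justification does not close it. The binomial-identity alternative you sketch is promising and is in fact what the paper records as formula~\eqref{u-rho-criterion} in the Remark, but you have not carried it out either. In short: your strategy is sound and the $\Delta^P$ part is complete; to finish the $\alpha\in I$ part directly you should either prove $\beta(\dot w_{0,I}^2)=1$ carefully, or actually execute the divided-power/binomial argument.
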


Before we prove Theorem \ref{urho-theorem}, we discuss the following situation. We may choose $\FF_q\subset k$ such that $G$, $B$, $T$ and $P$ are defined over $\FF_q$. Let $Q \colonequals LB^+$ be the opposite parabolic subgroup of $P$ with respect to $L$. Hence $\Zcal=(G,P,L,Q,L)$ is a zip datum for the $q$-th power Frobenius, and we have $\Zcal=\Zcal_{\mu}$, for any dominant cocharacter $\mu\colon \GG_{\mathrm{m},k}\to G_k$ with centralizer $L$. Hence, if we define $z=z_1=w_{0,I}w_0$, then $(B,T,z)$ is a frame of $\Zcal$ (Lemma \ref{lem-framemu}). Furthermore, by \eqref{Rrhorep}, we have
\begin{align}
    \psi^*\Vcal_{\Sbt}(\rho,\rho_0)&=\Vcal_{\flag}(\rho),  \\
    \psi^*\Vcal_{\Sbt}(\rho_0,\rho[z_1])&=\Vcal_{\flag}(\rho^{[1]}).
\end{align}
Now, let $(V,\rho)$ be an $L$-representation such that $V_{\GS}=V$. By Proposition \ref{propver}, we have a Verschiebung map $\vfr_\rho \colon \Vcal(\rho) \to \Vcal(\rho^{[1]})$ on $\GZip^\mu$. Pulling back via the natural projection $\pi\colon \GF^\mu \to \GZip^\mu$ (see \eqref{projmap-flag}), we obtain a map $\pi^*\vfr_\rho \colon \Vcal_{\flag}(\rho) \to \Vcal_{\flag}(\rho^{[1]})$ using \eqref{pullbackV}. Then, Theorem \ref{urho-theorem} above is a consequence of the following, more precise result:
\begin{proposition}\label{prop-w0I-Ver}
Assume that $V_{\GS}=V$. There exists a unique morphism of vector bundles $u_{\rho}\colon \Vcal_{\Sbt}(\rho,\rho_0) \to \Vcal_{\Sbt}(\rho_0,\rho[z_1])$ on the stack $\Sbt$ such that $\psi^*u_\rho = \pi^*\vfr_\rho $. Furthermore, $u_\rho$ extends the map defined in equation \eqref{u-rho-map} over $\Sbt$.
\end{proposition}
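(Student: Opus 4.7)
My plan is to construct $u_{\rho}$ directly on $\Sbt$ as a regular $(B\times B)$-equivariant function $F\colon G_k\to \End(V)$, and then to match $\psi^*u_{\rho}$ with $\pi^*\vfr_{\rho}$ on the open flag stratum, where both sides can be computed explicitly.

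\textbf{Uniqueness.} The open substack $\Sbt_{w_0}\subset \Sbt$ is dense and $u_{\rho}$ is already determined there by \eqref{u-rho-map}, so any extension is unique. Likewise the identity $\psi^*u_{\rho}=\pi^*\vfr_{\rho}$ can be checked on $\Ccal_{w_0}=\psi^{-1}(\Sbt_{w_0})$, where it reduces to the remark following Proposition \ref{propver} that on the $\mu$-ordinary locus $\vfr_{\rho}$ is induced by $\id_V$, which agrees with the pullback of multiplication by $\rho(w_{0,I})$.

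\textbf{Construction.} By the description in \S\ref{sec-VBSbt}, producing $u_{\rho}$ on $\Sbt$ is equivalent to producing a regular function $F\colon G_k\to \End(V)$ with
\[
 F(b_1 g b_2^{-1})=\rho[z_1](b_2)\,F(g)\,\rho(b_1)^{-1}, \qquad (b_1,b_2)\in B\times B.
\]
On the open cell $Bw_0B$ I take $F(b_1 w_0 b_2^{-1})\colonequals \rho[z_1](b_2)\rho(w_{0,I})\rho(b_1)^{-1}$; well-definedness follows from the $S_{w_0}$-equivariance of $\rho(w_{0,I})$ noted in the discussion preceding \eqref{u-rho-map}. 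It remains to show that $F$ extends regularly to all of $G_k$.

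\textbf{Main obstacle: extension across codimension one orbits.} By the codimension-one criterion, regularity need only be checked along each orbit $B s_\alpha w_0 B$ for $\alpha\in \Delta$. Using the parametrization $\psi_\alpha\colon ((b_1,b_2),t)\mapsto b_1\phi_\alpha(A(t))w_0 b_2^{-1}$ and the decomposition \eqref{At}, the composite $F\circ \psi_\alpha$ becomes an element of $\End(V)\otimes k[t,t^{-1}]$, and the morphism criterion stated at the end of \S\ref{sec-glob-sec-Sbt} expresses the absence of poles at $t=0$ as the vanishing of certain combinations of $E^{(j_1)}_{-\alpha,0}E^{(j_2)}_{0,w_0\alpha}$ applied to weight components of $F$. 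This is the hard step. The decisive input is the hypothesis $V=V_{\GS}$: for $\alpha\in \Delta^P$ it yields $\langle \nu,\alpha^\vee\rangle\le 0$ for every $T$-weight $\nu$ of $V$, and combined with the $q$-rescaling relating the weights of the target $\rho[z_1]$ to those of $\rho$ via $z_1=w_{0,I}w_0$, this is precisely the inequality appearing in the proof of Proposition \ref{propver} and forces all potential pole contributions to cancel; for $\alpha\in I$ the corresponding vanishing follows from $L$-dominance of the weights of $V$.
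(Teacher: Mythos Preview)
Your overall strategy---define $F$ on the big cell and extend across the codimension-one orbits using the morphism criterion at the end of \S\ref{sec-glob-sec-Sbt}---is in principle legitimate, and indeed the Remark following the paper's proof derives exactly the identity \eqref{u-rho-criterion} that your approach would need. But your sketch of the ``Main obstacle'' does not actually verify that identity, and the justifications you offer are not correct. There is no ``$q$-rescaling'' between $\rho$ and $\rho[z_1]$: the representation $\rho[z_1]$ is obtained from $\rho$ by conjugating by $z_1=w_{0,I}w_0$, with no Frobenius involved, so the weights are $z_1^{-1}\nu$, not $q\sigma^{-1}\nu$. The $q$ only enters after pulling back to $\GF^\mu$ via $\psi$, whereas you are working on $\Sbt$. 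Consequently your appeal to ``the inequality appearing in the proof of Proposition~\ref{propver}'' (which concerns the $\Delta^P$-filtration on $\GZip^\mu$) does not apply. For $\alpha\in I$ the situation is worse: the weights of an $L$-representation form a $W_L$-stable set and are certainly not all $L$-dominant, so ``$L$-dominance of the weights of $V$'' is not a meaningful hypothesis here; the required vanishing for $\alpha\in I$ is the nontrivial combinatorial identity \eqref{u-rho-criterion}, which the paper itself does not verify directly.

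The paper avoids this obstacle entirely by reversing the logic. Rather than constructing $u_\rho$ on $\Sbt$ and then matching it with $\pi^*\vfr_\rho$, the paper \emph{starts} from $\pi^*\vfr_\rho$, which is already a regular function $f\colon G_k\to\Hom(V,V^{[1]})$ because $\vfr_\rho$ exists globally on $\GZip^\mu$ by Proposition~\ref{propver}. One computes $f$ explicitly on the dense subset $R_{\mathrm u}(P)\,L\,R_{\mathrm u}(Q)$, sets $h(g)\colonequals f(gz_1^{-1})$, and checks directly that $h$ satisfies $h(bgb'^{-1})=\rho[z_1](b')\,h(g)\,\rho(b)^{-1}$ on the dense subset $BLz_1B$, hence on all of $G_k$. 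Thus $h$ defines $u_\rho$ on $\Sbt$ with $\psi^*u_\rho=\pi^*\vfr_\rho$ by construction, and regularity is automatic---no extension argument is needed. The value $h(w_0)=f(w_{0,I}^{-1})=\rho(w_{0,I})$ then shows $u_\rho$ agrees with \eqref{u-rho-map} on $\Sbt_{w_0}$. This is a genuinely different and much shorter route; your approach would require an independent proof of \eqref{u-rho-criterion}.
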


\begin{proof}
The uniqueness is clear, since $\psi$ is surjective. Using \eqref{globquot}, view $\pi^*\vfr_\rho$ as a map $f\colon G_k \to \Hom(V,V^{[1]})$. By construction, $f$ is the only such map satisfying: for all $(a,b)\in E$, $f(ab^{-1})=a\cdot \id_V=\rho(\varphi(a)a^{-1})$. We obtain that $f$ satisfies
\begin{equation}\label{relation-ver-Sbt}
f(r_1 x r_2)=\rho(x^{-1}),
\end{equation}
for all $x\in L$, $r_1\in R_{\mathrm{u}}(P)$ and $r_2\in R_{\mathrm{u}}(Q)$. Define $h\colon G_k \to \Hom(V,V)$, by $h(g)\colonequals f(gz_1^{-1})$. 
By \eqref{relation-ver-Sbt}, we have 
\begin{align}
h(bxb'^{-1})&=\rho(\theta^Q_L (z_1b'z_1^{-1}))\circ h(x)\circ \rho(b^{-1})    \\
& = \rho[z_1](b')\circ h(x)\circ \rho(b^{-1}) 
\end{align}
for all $b,b'\in B$ and all $x\in L z_1$. 
The same equality is true for 
all $b,b'\in B$ and all $x\in G_k$, 
since $B Lz_1 B$ is dense in $G_k$. Thus, $h$ defines a morphism $u_{\rho}\colon \Vcal_{\Sbt}(\rho,\rho_0) \to \Vcal_{\Sbt}(\rho_0,\rho[z])$ as claimed. Finally, we show that $u_\rho$ coincides with \eqref{u-rho-map} over $\Sbt_{w_0}$. Viewed as a map $V\to V$, the map $\pi^*\vfr_\rho$ corresponds to $h(w_0)=f(w_{0,I}^{-1})=\rho(w_{0,I})$, hence the result.
\end{proof}

\begin{rmk}
The statement of Theorem \ref{urho-theorem} also makes sense and holds for groups over an arbitrary algebraically closed field $F$. 
To explain this, we first write explicitly conditions (i) and (ii) for the map $u_\rho=\rho(w_{0,I})$.  
Using \S\ref{sec-glob-sec-Sbt}, one sees that the map \eqref{u-rho-map} extends to $\Sbt$ if and only if for all $d\in \ZZ$ and $\chi_1\in X^*(T)$ such that $d+\langle \chi_1, \alpha^\vee \rangle < 0$, the map
\begin{equation}\label{mapstozero}
    \sum_{j_1 \geq d} (-1)^{j_1}  E_{w_{0,I}\alpha}^{(j_1-d)} \rho(w_{0,I})  E_{-\alpha}^{(j_1)} =  \rho(w_{0,I}) \sum_{j_1 \geq d} (-1)^{j_1}  E_{\alpha}^{(j_1-d)} E_{-\alpha}^{(j_1)}
\end{equation}
maps to zero under the projection \eqref{proj-map}. We deduce that $u_\rho$ extends to $\Sbt$ if and only if for all $\alpha\in \Delta$ and all $d\in \ZZ$ such that $d+\langle w_{0,I}\nu-\nu-dw_{0,I}\alpha, \alpha^\vee \rangle <0$ we have
\begin{equation}\label{u-rho-criterion}
    \sum_{j\geq d} (-1)^j E_{\alpha}^{(j-d)} E_{-\alpha}^{(j)}(V_{\nu}) =0.
\end{equation}
Now, assume that $F$ has characteristic zero. By the classification of reductive groups, there exists a model of $G$ over $\overline{\QQ}$, and hence over a number field $K$,
and even over $\Ocal_K[\frac{1}{N}]$ for some integer $N\geq 1$. After possibly changing $\Ocal_K$ and $N$, we may assume that all other objects ($P$, $B$, the representation $V$, etc.) admit a model over $\Ocal_K[\frac{1}{N}]$. Therefore, for infinitely many primes $p$, the equation \eqref{u-rho-criterion} holds in $V\otimes_{\Ocal_K[\frac{1}{N}]} \FF_p$, since we showed the result in characteristic $p$. Hence it also holds in $V$, and we deduce that $u_\rho$ extends to $\Sbt$. In particular, formula \eqref{u-rho-criterion} above holds in general.
\end{rmk}

\begin{example}
For $G=\Sp(4)$, $L=\GL_2$ and $V=\Sym^n(\Std)$, formula \eqref{u-rho-criterion} amounts to the following: For all $d\in \ZZ$ and $0\leq i \leq n$ such that $4i-2n+3d<0$, we have
\begin{equation}
    \sum_{j=0}^{n-i-d}(-1)^j\binom{n-i}{j+d} \binom{j+d+i}{d+i} =0.
\end{equation}
The above discussion shows that this formula holds (in $\ZZ$, not only in $\FF_q$). It is also possible to show this formula 
directly using the binomial transform. 
\end{example}

\subsection{\texorpdfstring{Verschiebung homomorphism for $G$-representations}{}}

We study the existence of Verschiebung maps on $G$-representations. We restrict to representations of the form $V_\Delta(\chi)=\Ind_B^G(\lambda)$ for a dominant $\chi\in X_{+}^*(T)$. There is a map of $L$-representations:
\begin{equation}
  \Pi_\chi \colon V_\Delta(\chi)|_L\to V_I(w_{0,I}w_0 \chi)
\end{equation}
defined as follows. Let $f\in V_{\Delta}(\chi)$, viewed as a function $f \colon G\to \AA^1$ satisfying $f(xb)=\chi^{-1}(b)f(x)$ for all $x\in G$ and $b\in B$. Define $\Pi_\chi(f)$ as the function $L\to \AA^1$ mapping $x\in L$ to $f(x w_{0,I}w_0)$. Clearly $\Pi_\chi(f)$ lies in $V_I(w_{0,I}w_0 \chi)$ and $\Pi_\chi$ is $L$-equivariant.

\begin{proposition} \label{prop-PiX-isom}
The map $\Pi_\chi$ induces an isomorphism of $L$-representations
\begin{equation}
V_{\Delta}(\chi)^{R_{\mathrm{u}}(P)} \to V_I(w_{0,I}w_0 \chi).    
\end{equation}
In particular, $V_{\Delta}(\chi)|_L$ decomposes as $V_{\Delta}(\chi)|_L=V_I(w_{0,I}w_0 \chi) \oplus \Ker(\Pi_\chi)$.
\end{proposition}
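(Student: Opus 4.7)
The plan is to prove (i) injectivity and (ii) surjectivity of $\Pi_\chi|_{V_\Delta(\chi)^{R_{\mathrm{u}}(P)}}$ onto $V_I(w_{0,I}w_0\chi)$, and then deduce (iii) the direct sum decomposition formally. For injectivity, if $f\in V_\Delta(\chi)^{R_{\mathrm{u}}(P)}$ satisfies $\Pi_\chi(f)=0$, then $f(xw_{0,I}w_0)=0$ for all $x\in L$; combining the left $R_{\mathrm{u}}(P)$-invariance of $f$ with the right $B$-equivariance coming from $V_\Delta(\chi)=\Ind_B^G\chi$, $f$ vanishes on $R_{\mathrm{u}}(P)\cdot L\cdot w_{0,I}w_0\cdot B = Pw_{0,I}w_0 B$. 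Since $w_{0,I}w_0$ is the longest element of ${}^IW$, this is the big $(P,B)$-Bruhat cell, open and dense in $G$, so $f\equiv 0$.

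For (ii), identify $V_\Delta(\chi)=H^0(G/B,\Lcal_\chi)$ and consider the $L$-equivariant morphism $\eta\colon L/B_L \to G/B$, $xB_L \mapsto xw_{0,I}w_0 B$. A root-system calculation, using the combinatorial fact that $w_{0,I}$ permutes $\Phi_+\setminus\Phi_{L,+}$, shows $L\cap (w_{0,I}w_0) B (w_{0,I}w_0)^{-1}=B_L$, so $\eta$ is a closed embedding onto the closed $L$-orbit $Z$ through $w_{0,I}w_0 B$. Then $\eta^*\Lcal_\chi$ is the line bundle of weight $w_{0,I}w_0\chi$ on $L/B_L$, its global sections form $V_I(w_{0,I}w_0\chi)$, and $\Pi_\chi$ factors as the restriction of sections along $\eta$. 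Given $g\in V_I(w_{0,I}w_0\chi)$, I would first define $\tilde f$ on the big cell by $\tilde f(uxw_{0,I}w_0 b) := g(x)\chi(b)^{-1}$ for $u\in R_{\mathrm{u}}(P)$, $x\in L$, $b\in B$: the ambiguity in this factorization is $P\cap (w_{0,I}w_0) B (w_{0,I}w_0)^{-1}=B_L$ by the same root-system calculation, and the $B_L$-transformation rule of $g$ exactly absorbs it, so $\tilde f$ is well-defined and regular on $Pw_{0,I}w_0 B$. The main obstacle is extending $\tilde f$ regularly to all of $G$; equivalently, establishing the cohomology vanishing $H^1(G/B,\Iscr_Z\otimes\Lcal_\chi)=0$, which I would approach by stratifying the complement of the big cell into $P$-orbits and applying Kempf vanishing inductively along the closure order. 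Once $\tilde f$ extends, it is automatically $R_{\mathrm{u}}(P)$-invariant, being so on the dense open cell by construction.

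Step (iii) is then formal: the $L$-equivariant isomorphism $\Pi_\chi|_{V_\Delta(\chi)^{R_{\mathrm{u}}(P)}}\xrightarrow{\sim} V_I(w_{0,I}w_0\chi)$ inverts to give an $L$-equivariant section $\iota\colon V_I(w_{0,I}w_0\chi)\hookrightarrow V_\Delta(\chi)$, and the tautological identity $f=\iota(\Pi_\chi(f))+(f-\iota(\Pi_\chi(f)))$, with second summand in $\Ker\Pi_\chi$, yields the decomposition $V_\Delta(\chi)|_L=V_I(w_{0,I}w_0\chi)\oplus\Ker\Pi_\chi$.
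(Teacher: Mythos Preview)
Your injectivity argument and the construction of $\tilde f$ on the big cell $Pw_{0,I}w_0B$ (including the well-definedness check via $P\cap (w_{0,I}w_0)B(w_{0,I}w_0)^{-1}=B_L$) match the paper exactly, and step~(iii) is fine once (i) and (ii) are in hand. The gap is in the extension step, and there are two problems with your reduction to the vanishing of $H^1(G/B,\Iscr_Z\otimes\Lcal_\chi)$.

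First, the equivalence you assert is not correct. That vanishing yields surjectivity of the restriction $H^0(G/B,\Lcal_\chi)\to H^0(Z,\Lcal_\chi|_Z)$, hence \emph{some} $F\in V_\Delta(\chi)$ with $\Pi_\chi(F)=g$; but $F$ need not be $R_{\mathrm{u}}(P)$-invariant, and in positive characteristic there is no averaging over a unipotent group to force it. Your $\tilde f$ is $R_{\mathrm{u}}(P)$-invariant by design, but the cohomological statement says nothing about whether this particular function on the open cell extends. Passing from surjectivity of $\Pi_\chi$ to surjectivity of $\Pi_\chi|_{V_\Delta(\chi)^{R_{\mathrm{u}}(P)}}$ amounts to right-exactness of $(-)^{R_{\mathrm{u}}(P)}$ on the sequence $0\to\Ker\Pi_\chi\to V_\Delta(\chi)\to V_I(w_{0,I}w_0\chi)\to 0$, which is exactly what fails in characteristic $p$. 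Second, you do not actually establish the vanishing; filtering $\Iscr_Z$ by $P$-orbit closures and invoking Kempf inductively would require, at each step, higher-cohomology vanishing for twists of ideal sheaves of Schubert-type subvarieties, which is not Kempf's theorem and is not obvious.

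The paper bypasses cohomology entirely and shows directly that your specific $\tilde f$ extends. For each $\alpha\in\Delta\setminus I$ one introduces $\psi_\alpha\colon P\times\AA^1\to G$, $(x,t)\mapsto x\,\phi_\alpha(A(t))\,w_0$ with $A(t)=\begin{psmallmatrix}t&1\\-1&0\end{psmallmatrix}$; this lands in the big cell for $t\ne0$ and in a codimension-one boundary stratum at $t=0$. Using the factorization $A(t)=\begin{psmallmatrix}1&0\\-t^{-1}&1\end{psmallmatrix}\begin{psmallmatrix}t&0\\0&t^{-1}\end{psmallmatrix}\begin{psmallmatrix}1&t^{-1}\\0&1\end{psmallmatrix}$ and the $R_{\mathrm{u}}(P)$-invariance and right $B$-equivariance of $\tilde f$, one computes
\[
\tilde f(\psi_\alpha(x,t))=f'(\theta^P_L(x)\,w_{0,I})\;t^{-\langle\chi,\,w_0\alpha^\vee\rangle},
\]
and $-\langle\chi,w_0\alpha^\vee\rangle\ge0$ precisely because $\chi$ is dominant. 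Since $G$ is normal, regularity along these one-parameter degenerations for each $\alpha\in\Delta^P$ suffices to extend $\tilde f$ across the codimension-one boundary, hence to all of $G$. This is where (and only where) the dominance hypothesis is used.
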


\begin{proof}
We first show that $\Pi_\chi$ is injective on $V_{\Delta}(\chi)^{R_{\mathrm{u}}(P)}$. Suppose $f\in V_{\Delta}(\chi)^{R_{\mathrm{u}}(P)}$ satisfies $\Pi_\chi(f)=0$. Then $f(x w_0b)=0$ for all $x\in P$ and all $b\in B$. Since $P w_0 B$ is Zariski dense in $G$, we deduce $f=0$. We now show that $\Pi_\chi|_{V_{\Delta}(\chi)^{R_{\mathrm{u}}(P)}}$ is surjective. Let $f'\in V_I(w_{0,I}w_0 \chi)$, viewed as a function $f' \colon L\to \AA^1$. We must define $f(x w_{0,I} w_0 b) = f'(\theta^P_L(x) ) \chi(b)^{-1}$ for all $(x,b)\in P\times B$. We claim that $f$ is well-defined on $P w_{0}B$. Indeed, suppose $x w_{0,I} w_0 b = x' w_{0,I} w_0 b'$ for $x,x'\in P$ and $b,b'\in B$. We obtain $x'^{-1}x = w_{0,I}w_0 b'b^{-1}w_{0}^{-1}w_{0,I}^{-1}\in B_L$, hence
\begin{align}
f'(\theta^P_L(x) ) \chi(b)^{-1} &= f'(\theta^P_L(x') x'^{-1}x)  \chi(b)^{-1} \\ 
&= f'(\theta^P_L(x'))(w_{0,I}w_0 \chi)^{-1}( w_{0,I}w_0 b'b^{-1}w_{0}^{-1}w_{0,I}^{-1}) \chi(b)^{-1} = f'(\theta^P_L(x') ) \chi(b')^{-1}.
\end{align}
Next, we show that $f$ extends to a regular function on $G$. For this, we consider the map $\psi_\alpha \colon P\times \AA^1 \to G$ for $\alpha \in \Delta \setminus I$ given by $\psi_\alpha \colon (x,t)\mapsto x \phi_\alpha(A(t)) w_0$ for $(x,t)\in P \times \AA^1$. It suffices to show that for all $\alpha \in \Delta \setminus I$, the map $f\circ \psi_\alpha \colon P\times \GG_{\mathrm{m}} \to \AA^1$ extends to a map $P\times \AA^1 \to \AA^1$. For this, we write $A(t)$ as in \eqref{At}, and we find for $(x,t)\in P\times \GG_{\mathrm{m}}$:
\begin{equation}
    f(\psi_\alpha(x,t))= f'(\theta^P_L(x) \alpha^\vee(t) w_{0,I}) = f'(\theta^P_L(x)w_{0,I}) t^{-\langle w_{0,I}w_0 \chi, w_{0,I}\alpha^\vee \rangle} = f'(\theta^P_L(x)w_{0,I}) t^{-\langle \chi, w_{0}\alpha^\vee \rangle}.
\end{equation}
Since $\chi$ is dominant, we have $-\langle \chi, w_{0}\alpha^\vee \rangle \geq 0$, which terminates the proof.
\end{proof}

Let $(G,\mu)$ be a cocharacter datum over $\FF_q$, with attached zip datum $\Zcal_\mu=(G,P,L,Q,M,\varphi)$. Assume in the rest of this section that $P$ is defined over $\FF_q$. Let again $\chi \in X^*_{+}(T)$. We identify $V_I(w_{0,I}w_0 \chi)$ with $V_{\Delta}(\chi)^{R_{\mathrm{u}}(P)}$ and view $\Pi_\chi$ as a map $V_{\Delta}(\chi)\to V_{\Delta}(\chi)$ with image $V_I(w_{0,I}w_0 \chi)$. It is clear that $\Pi_\chi \colon V_{\Delta}(\chi) \to V_{\Delta}(\chi^{[1]})$ is $L(\FF_q)$-equivariant. Therefore, by Lemma \ref{lem-Umu-sections}, the map $\Pi_\chi$ gives rise to a morphism of vector bundles
\begin{equation}
  \vfr_\chi \colon  \Vcal_{\Delta}(\chi)|_{\Ucal_\mu} \to \Vcal_{\Delta}(\chi^{[1]})|_{\Ucal_\mu}.
\end{equation}
In the following, we investigate whether this map extends to a morphism $\vfr_\chi \colon  \Vcal_{\Delta}(\chi) \to \Vcal_{\Delta}(\chi^{[1]})$ over the whole stack $\GZip^\mu$. When it does, we call the unique extension the Verschiebung homomorphism of $\Vcal_{\Delta}(\chi)$

\begin{lemma}\label{lemma-roots}
Let $\alpha\in \Delta^P$, $\beta\in \Delta_L$ and assume $i \alpha + j\beta \in \Phi_+$ for $i\geq 0, j\geq 1$. Then we have $i\leq - j\langle \beta , \alpha^\vee \rangle$.
\end{lemma}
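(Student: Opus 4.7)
The plan is to apply the simple reflection $s_\alpha$ to $\gamma \colonequals i\alpha + j\beta$ and exploit positivity of the resulting root.

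First I would observe that $\alpha \neq \beta$: this holds because $\alpha \in \Delta^P = \Delta \setminus I$ while $\beta \in \Delta_L \subset I$. Moreover $\gamma \neq \alpha$, since its expansion on $\Delta$ has $\beta$-coefficient $j \geq 1$. It is a classical fact that, for any simple root $\alpha$, the reflection $s_\alpha$ permutes $\Phi_+ \setminus \{\alpha\}$ (\cf\ \cite[Proposition 1.4]{Humphreys-reflection}). Hence $s_\alpha \gamma \in \Phi_+$.

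Next, I would compute $s_\alpha \gamma$ explicitly. Using $\langle \gamma, \alpha^\vee \rangle = 2i + j\langle \beta, \alpha^\vee \rangle$, one obtains
\[
s_\alpha \gamma \;=\; \gamma - \langle \gamma, \alpha^\vee \rangle \alpha \;=\; -\bigl(i + j\langle \beta, \alpha^\vee\rangle\bigr)\alpha + j\beta.
\]
Since $s_\alpha\gamma$ is a positive root, all its simple-root coordinates are $\geq 0$; in particular its $\alpha$-coordinate satisfies $-(i + j\langle \beta, \alpha^\vee \rangle) \geq 0$, which is exactly the desired inequality $i \leq -j\langle \beta, \alpha^\vee\rangle$.

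There is no real obstacle here: the entire argument is a one-line consequence of the behavior of $s_\alpha$ on positive roots together with the computation of $\langle \gamma, \alpha^\vee\rangle$. The conditions $\alpha \in \Delta^P$ and $\beta \in \Delta_L$ are only used to ensure $\alpha \neq \beta$ (so that $\gamma \neq \alpha$ and the positivity of $s_\alpha\gamma$ can be invoked), and the hypothesis $i \geq 0$ plays no role in the argument beyond the fact that $\gamma$ is a nonnegative combination of simple roots.
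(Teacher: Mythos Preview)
Your proof is correct and uses essentially the same idea as the paper: apply the reflection $s_\alpha$ and exploit that the coefficients of a positive root in the simple-root basis are all nonnegative. The paper phrases this via the contrapositive (showing that $s_\alpha(j\beta - n\alpha)\notin\Phi_+$ for $n\geq 1$), whereas you apply $s_\alpha$ directly to $\gamma$ and read off the $\alpha$-coefficient; your version is slightly cleaner but the content is the same.
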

\begin{proof}
The claim is equivalent to $(n-j\langle\beta,\alpha^{\vee}\rangle)\alpha + j\beta \notin \Phi_+$ 
for any $n \geq 1$.
This is equivalent to
$s_{\alpha}(j\beta-n\alpha) \notin \Phi_+$ 
for any $n \geq 1$. 
If $s_{\alpha}(j\beta-n\alpha) \in \Phi_+$ 
for some $n \geq 1$, 
then $j\beta-n\alpha$ is a root, 
but this contradicts that $\alpha$ and $\beta$ are different simple roots.
\end{proof}

\begin{lemma} \label{Gamma-extends}
Let $\alpha\in \Delta^P$. For $(x,t)\in R_{\mathrm{u}}(B_L)\times \GG_{\mathrm{m}}$, define
\begin{equation}
    \Gamma(x,t) =  \phi_{\alpha} \left(\begin{matrix}
t&0\\1&t^{-1}
\end{matrix} \right) x \phi_{\alpha} \left(\begin{matrix}
t&0\\1&t^{-1}
\end{matrix} \right)^{-1} \in G.
\end{equation}
Then $\Gamma$ extends to a regular map $R_{\mathrm{u}}(B_L)\times \AA^1 \to G$.
\end{lemma}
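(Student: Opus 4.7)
The plan is to compute $\Gamma(x,t)$ in terms of root-group elements and carefully track the $t$-exponents. The factorization
\[
\begin{pmatrix} t & 0 \\ 1 & t^{-1} \end{pmatrix} = \begin{pmatrix} 1 & 0 \\ t^{-1} & 1 \end{pmatrix}\begin{pmatrix} t & 0 \\ 0 & t^{-1} \end{pmatrix}
\]
in $\SL_2$ yields $\phi_\alpha\!\begin{pmatrix}t & 0 \\ 1 & t^{-1}\end{pmatrix}=u_{-\alpha}(t^{-1})\,\alpha^\vee(t)$, so setting $y(t)\colonequals \alpha^\vee(t)\,x\,\alpha^\vee(t)^{-1}$ we can rewrite $\Gamma(x,t)=u_{-\alpha}(t^{-1})\,y(t)\,u_{-\alpha}(-t^{-1})$.

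First I argue that $y(t)$ is already polynomial in $t$. In our conventions (\S\ref{subsec-notation}) $B$ is generated by $T$ and the $U_{-\gamma}$ for $\gamma\in\Phi_+$, so $R_{\mathrm{u}}(B_L)=\prod_{\beta\in\Phi_{L,+}}U_{-\beta}$ in any fixed order. Writing $x=\prod u_{-\beta}(a_\beta)$ and using \eqref{eq:phiconj} factor-by-factor gives $\alpha^\vee(t)\,u_{-\beta}(a_\beta)\,\alpha^\vee(t)^{-1}=u_{-\beta}(t^{-\langle\beta,\alpha^\vee\rangle}a_\beta)$. Since $\alpha\in\Delta^P$ and the support of $\beta$ lies in $I\subseteq\Delta\setminus\{\alpha\}$, the integer $n_\beta\colonequals-\langle\beta,\alpha^\vee\rangle$ is nonnegative, so $y(t)=\prod u_{-\beta}(t^{n_\beta}a_\beta)$ lies in $R_{\mathrm{u}}(B_L)(k[t])$.

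The main step is to verify that conjugation by $u_{-\alpha}(t^{-1})$ preserves regularity. Pushing one factor $u_{-\beta}(t^{n_\beta}a_\beta)$ of $y(t)$ past $u_{-\alpha}(t^{-1})$ via Chevalley's commutator formula produces, besides the original factor, new root-group terms $u_{-i\alpha-j\beta}(c_{i,j}\,t^{-i+jn_\beta}\,a_\beta^j)$ for $i,j\geq 1$ with $-i\alpha-j\beta\in\Phi$. The key observation is that the $t$-exponent $-i+jn_\beta=-i-j\langle\beta,\alpha^\vee\rangle$ is $\geq 0$ by Lemma \ref{lemma-roots} applied to $i\alpha+j\beta\in\Phi_+$ (the proof of that lemma in fact works for any $\beta\in\Phi_{L,+}$, since the $\alpha$- and $I$-coefficients of $j\beta-n\alpha$ always have opposite signs). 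Iterating through the factors of $y(t)$---commuting each newly produced $u_{-i\alpha-j\beta}$-term through the remaining $u_{-\beta'}$ and then through the outer $u_{-\alpha}(-t^{-1})$ by the same Chevalley/Lemma \ref{lemma-roots} estimate, and noting that the iteration terminates because Lemma \ref{lemma-roots} bounds $i$ in terms of $j$ and $\beta$---the two outer $u_{\pm\alpha}(\pm t^{-1})$ factors eventually cancel, leaving an expression polynomial in $t$ with values in the unipotent subgroup of $G$ generated by the root groups $U_{-i\alpha-\beta'}$ for $i\geq 0$ and $\beta'\in\Phi_{L,+}\cup\{0\}$. This yields the desired regular extension of $\Gamma$ across $t=0$. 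The main obstacle is the iterated-commutator bookkeeping; the crucial input that keeps every $t$-exponent non-negative at each stage is Lemma \ref{lemma-roots}.
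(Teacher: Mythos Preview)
Your argument is correct, but it takes a needlessly complicated route compared to the paper. Two simplifications make the paper's proof much shorter. First, the paper factors in the opposite order:
\[
\phi_\alpha\begin{pmatrix}t&0\\1&t^{-1}\end{pmatrix}=\alpha^\vee(t)\,u_{-\alpha}(t),
\]
so that $\Gamma(x,t)=\alpha^\vee(t)\bigl(u_{-\alpha}(t)\,x\,u_{-\alpha}(-t)\bigr)\alpha^\vee(t)^{-1}$. With this ordering the inner unipotent conjugation involves only $u_{-\alpha}(t)$ (no $t^{-1}$), so the Chevalley commutator formula already gives a polynomial expression in $t$; the only place a negative power could appear is in the outer torus conjugation, and that is exactly what Lemma~\ref{lemma-roots} controls. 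Second, since $\Gamma(x_1x_2,t)=\Gamma(x_1,t)\Gamma(x_2,t)$, the paper immediately reduces to the case $x=u_{-\beta}(s)$ for a single \emph{simple} root $\beta\in\Delta_L$. This eliminates the iterated-commutator bookkeeping entirely and lets one use Lemma~\ref{lemma-roots} as stated, without extending it to arbitrary $\beta\in\Phi_{L,+}$.

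Your approach does work: the extension of Lemma~\ref{lemma-roots} to $\beta\in\Phi_{L,+}$ is valid for the reason you give, and sliding $u_{-\alpha}(t^{-1})$ through the factors of $y(t)$ one at a time (each step producing a commutator polynomial in $t$) eventually cancels it against $u_{-\alpha}(-t^{-1})$. In fact the further ``iteration'' you describe---commuting the newly produced $u_{-i\alpha-j\beta}$ terms through the remaining factors---is not needed: once $u_{-\alpha}(t^{-1})$ has passed all factors and cancelled, what remains is simply a product of terms each already polynomial in $t$. So the argument is sound but heavier than necessary; the paper's choice of factorization order and the multiplicativity reduction are worth noting as the cleaner path.
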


\begin{proof}
We can view $\Gamma$ as an element of $R[t,t^{-1}]$ where $R=k[R_{\mathrm{u}}(B_L)]$ is the ring of functions of $R_{\mathrm{u}}(B_L)$. We need to show that $\Gamma\in R[t]$. For this, it suffices to check that for any $x\in R_{\mathrm{u}}(B_L)$, the map $t\mapsto \Gamma(x,t)$ extends to $\AA^1$. Furthermore, we may assume that $x$ is of the form $x=u_{-\beta}(s)$ for $s\in \AA^1$ and $\beta\in \Delta_L$, as elements of this form generate $R_{\mathrm{u}}(B_L)$. We may write $\Gamma(x,t)= \alpha^{\vee}(t) F(t,s) \alpha^{\vee}(t)^{-1}$, where $F(t,s)=u_{-\alpha}(t)u_{-\beta}(s) u_{-\alpha}(t)^{-1}$. Write $[u_{-\alpha}(t),u_{-\beta}(s)]$ for the commutator of $u_{-\alpha}(t)$ and $u_{-\beta}(s)$. By \cite[Proposition 8.2.3]{Springer-Linear-Algebraic-Groups-book}, we can write
\begin{equation}\label{comm-eq}
    F(t,s)u_{-\beta}(s)^{-1} = [u_{-\alpha}(t),u_{-\beta}(s)] = \prod_{i,j> 0} \phi_{i\alpha+j\beta}\left(\begin{matrix}
1&0\\u_{i,j} t^i s^j&1
\end{matrix} \right)
\end{equation}
for some $u_{i,j}\in k$, and for an arbitrarily chosen order on the set of positive roots $\Phi_+$. 
Returning to $\Gamma(x,t)$, we deduce
\begin{equation}
    \Gamma(x,t) = \left(\prod_{i,j> 0} \phi_{i\alpha+j\beta}\left(\begin{matrix}
1&0\\u_{i,j} t^{i-\langle i\alpha+j\beta, \alpha^\vee \rangle}s^j&1
\end{matrix} \right) \right) \phi_{\beta}\left(\begin{matrix}
1&0\\st^{-\langle\beta,\alpha^\vee\rangle}&1
\end{matrix} \right). 
\end{equation}
Since $\alpha$ and $\beta$ are distinct simple roots, $-\langle\beta,\alpha^\vee\rangle\geq 0$. Moreover, $i-\langle i\alpha+j\beta, \alpha^\vee \rangle=-i-j\langle \beta,\alpha^\vee \rangle \geq 0$ by Lemma \ref{lemma-roots}. 
Hence the result follows. 
\end{proof}

\begin{lemma} \label{lemma-Ffalpha}
Let $\chi\in X^*_{+}(T)$ and $f\in V_{\Delta}(\chi)$, viewed as a function $G\to \AA^1$. For $\alpha\in \Delta^P$, consider the function $L\times \GG_{\mathrm{m}} \to \AA^1$
\begin{equation}
  F_{f,\alpha}  \colon  (x,t) \mapsto  f\left( \phi_{\alpha} \left(\begin{matrix}
t&0\\1&t^{-1}
\end{matrix} \right) x w_0 \right).
\end{equation}
Then $F_{f,\alpha}$ extends to a function $L\times \AA_1\to \AA^1$.
\end{lemma}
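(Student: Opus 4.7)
The plan is to exploit the key identity
\[
 \phi_\alpha\!\left(\begin{pmatrix}t & 0 \\ 1 & t^{-1}\end{pmatrix}\right) = \alpha^\vee(t)\,u_{-\alpha}(t) = u_{-\alpha}(t^{-1})\,\alpha^\vee(t),
\]
which follows from the $\SL_2$-decomposition $\bigl(\begin{smallmatrix}t & 0\\ 1 & t^{-1}\end{smallmatrix}\bigr) = \bigl(\begin{smallmatrix}1 & 0\\ t^{-1} & 1\end{smallmatrix}\bigr)\bigl(\begin{smallmatrix}t & 0\\ 0 & t^{-1}\end{smallmatrix}\bigr)$ together with the commutation relation $\alpha^\vee(t) u_{-\alpha}(s)\alpha^\vee(t)^{-1} = u_{-\alpha}(t^{-2}s)$ derived from \eqref{eq:phiconj}. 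The crucial geometric consequence is that, since $\alpha\in \Delta^P$, the root group $U_{-\alpha}$ lies in $R_u(P)$; hence $\phi_\alpha M(t)\in P$ for every $t\in \GG_{\mathrm{m}}$, and in particular $\phi_\alpha M(t) x w_0\in Pw_0$ for all $x\in L$.

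The next step is to absorb the $R_u(P)$-part. Using that $L$ normalizes $R_u(P)$, I would rewrite
\[
 u_{-\alpha}(t)\,x = x\cdot\tilde u(t),\qquad \tilde u(t):= x^{-1}u_{-\alpha}(t)\,x\in R_u(P),
\]
with $\tilde u(t)$ regular on all of $\AA^1$. Inserting $w_{0,I}w_{0,I}^{-1}$ gives
\[
 \phi_\alpha M(t)\,x\,w_0 = \bigl[\alpha^\vee(t)\,x\,w_{0,I}\cdot (w_{0,I}^{-1}\tilde u(t)\,w_{0,I})\bigr]\cdot w_{0,I}w_0,
\]
where the bracketed factor lies in $P$ and is regular in $t$ apart from the factor $\alpha^\vee(t)$. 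The remaining task is therefore to show that evaluating $f$ at this expression produces no negative powers of $t$.

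Decomposing $f = f_1 + f_2$ according to the splitting $V_\Delta(\chi)|_L = V_I(w_{0,I}w_0\chi)\oplus \ker\Pi_\chi$ from Proposition \ref{prop-PiX-isom}, the component $f_1\in V_\Delta(\chi)^{R_u(P)}$ is handled directly by the defining formula of $\Pi_\chi$: one obtains
\[
 f_1(\phi_\alpha M(t)\,x\,w_0) = \Pi_\chi(f_1)\bigl(\alpha^\vee(t)\,x\,w_{0,I}\bigr),
\]
which extends to $t=0$ by a parallel argument inside $L$, using that $w_{0,I}w_0\chi$ is $L$-dominant because $\chi$ is $G$-dominant. For the component $f_2\in \ker\Pi_\chi$, I would expand in $T$-weights $f_2 = \sum_\lambda (f_2)_\lambda$ and use $(f_2)_\lambda(\alpha^\vee(t)\,g) = t^{-\langle\lambda,\alpha^\vee\rangle}(f_2)_\lambda(g)$ together with $(f_2)_\lambda(u_{-\alpha}(t)\,g) = \sum_k (-t)^k (E_{-\alpha}^{(k)}(f_2)_\lambda)(g)$ to get the Laurent expansion
\[
 F_{f_2,\alpha}(x,t) = \sum_{\lambda,k\geq 0}(-1)^k\,t^{k-\langle\lambda,\alpha^\vee\rangle}(E_{-\alpha}^{(k)}(f_2)_\lambda)(x w_0).
\]

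The main obstacle, and the heart of the proof, is verifying that the coefficients of negative powers of $t$ cancel. This is the analogue, for this setting, of the commutator estimate in the proof of Lemma \ref{Gamma-extends}: one shows that the non-vanishing of $(E_{-\alpha}^{(k)}(f_2)_\lambda)(xw_0)$ imposes a root-combinatorial constraint of the form $w_0\chi - \lambda + k\alpha\in \ZZ\Delta_L$, pinning down $k$ for each $\lambda$. The dominance of $\chi$, combined with Lemma \ref{lemma-roots} to control the relevant root-string lengths, then forces $k\geq \langle\lambda,\alpha^\vee\rangle$ in every surviving term. This is the step that genuinely requires $\chi\in X^*_+(T)$ and is where the book-keeping is most delicate.
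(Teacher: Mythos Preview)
Your approach diverges substantially from the paper's, and the $f_2$ part contains a genuine gap.

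The paper's proof does not split $f$ at all. Instead it reduces to $x\in R_{\mathrm u}(B_L)$ by density, then \emph{conjugates}: it writes $\phi_\alpha(M(t))\,x=\Gamma(x,t)\,\phi_\alpha(M(t))$, where $\Gamma(x,t)$ extends regularly to $t=0$ by Lemma~\ref{Gamma-extends}. Finally it uses the factorization
\[
\phi_\alpha\!\begin{pmatrix}t&0\\1&t^{-1}\end{pmatrix}=\phi_\alpha\!\begin{pmatrix}t&-1\\1&0\end{pmatrix}\phi_\alpha\!\begin{pmatrix}1&t^{-1}\\0&1\end{pmatrix},
\]
so that the right factor, once pushed past $w_0$, lands in $R_{\mathrm u}(B)$ and is killed by the right $B$-equivariance of $f$. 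The remaining expression is visibly regular in $t$. Note that Lemma~\ref{Gamma-extends} is precisely the commutator estimate that makes this work; you mention it but never actually use it.

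Your $f_1$ argument is fine (it amounts to Lemma~\ref{GS-Deltapos}). The problem is $f_2$. What you need is that $(E_{-\alpha}^{(k)}(f_2)_\lambda)(xw_0)=0$ for all $x\in L$ whenever $k<\langle\lambda,\alpha^\vee\rangle$; equivalently, $E_{-\alpha}^{(k)}(f_2)_\lambda\in\ker\Pi_\chi$. But this is \emph{exactly} the statement that the paper deduces \emph{from} Lemma~\ref{lemma-Ffalpha} in the proof of Theorem~\ref{ver-thmG}. So you are attempting to reverse the paper's implication, and you need an independent argument. Your sketch (``$\lambda-k\alpha$ is a weight of $V_I(w_{0,I}w_0\chi)$, then use dominance and Lemma~\ref{lemma-roots}'') does not deliver one: pairing $\lambda-k\alpha$ with $\alpha^\vee$ and using $\langle\nu,\alpha^\vee\rangle\le 0$ for weights $\nu$ of $V_I(w_{0,I}w_0\chi)$ only yields $k\ge\tfrac12\langle\lambda,\alpha^\vee\rangle$, which is too weak. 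The claim that the constraint ``pins down $k$'' is correct (since $\alpha\notin\ZZ I$), but uniqueness of $k$ does not by itself produce the lower bound $k\ge\langle\lambda,\alpha^\vee\rangle$, and Lemma~\ref{lemma-roots} concerns root strings, not weight positions in $V_\Delta(\chi)$; you have not explained how it closes the gap. Without this, the $f_2$ case---and hence the lemma---remains unproved.
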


\begin{proof}
We may view $F_{f,\alpha}$ as an element of $R_1[t,t^{-1}]$ where $R_1=k[L]$ is the ring of functions of $L$ over $k$. We must show that $F_{f,\alpha}$ lies in $R_1[t]$. Furthermore, it suffices to show that this is true for $x$ in a dense subset of $L$ (because we can write $F_{f,\alpha}=\sum_{j\geq -N}F_j(x)t^j$ for some $F_j\in R_1$ and $N\in \NN$, and we deduce by density that $F_j=0$ for all $j<0$). The elements of the form $x=yz$ for $y\in R_{\mathrm{u}}(B_L)$ and $z\in B^+_L$ form an open dense subset of $L$. Note that
\begin{equation}
 F_{f,\alpha}(yz,t) = \chi(w_{0}^{-1} z w_0)^{-1} F_{f,\alpha}(y,t).
\end{equation}
Hence, it suffices to show that $t\mapsto F_{f,\alpha}(x,t)$ extends to $\AA^1$ for $x\in  R_{\mathrm{u}}(B_L)$. We have
\begin{equation}
     \phi_{\alpha} \left(\begin{matrix}
t&0\\1&t^{-1}
\end{matrix} \right) x =  \Gamma(x,t) \phi_{\alpha} \left(\begin{matrix}
t&0\\1&t^{-1}
\end{matrix} \right) = \Gamma(x,t) \phi_{\alpha} \left(\begin{matrix}
t&-1\\1&0
\end{matrix} \right) \phi_{\alpha} \left(\begin{matrix}
1&t^{-1}\\0&1
\end{matrix} \right).
\end{equation}
Using $w_0^{-1} \phi_{\alpha} \begin{pmatrix}
1&t^{-1}\\0&1
\end{pmatrix} w_0 \in R_{\mathrm{u}}(B)$, we get $F_{f,\alpha}(x,t)=f \Big( \Gamma(x,t) \phi_{\alpha} \begin{pmatrix}
t&-1\\1&0
\end{pmatrix} w_0  \Big)$. The result then follows from Lemma \ref{Gamma-extends}.
\end{proof}

Recall that we view $V_I(w_{0,I}w_0\chi)$ as a subspace of $V_\Delta(\chi)$ as in Proposition \ref{prop-PiX-isom}. Note that since $\chi$ is dominant, we have $w_{0,I}w_0\chi\in C_{\GS}$ (Lemma \ref{lemma-cone-GS}). Thus by Proposition \ref{propver}, we have a map $\vfr_\rho  \colon  \Vcal_I(w_{0,I}w_0\chi) \to \Vcal_I(w_{0,I}w_0\chi^{[1]})$ (where $\rho=\rho_{I,w_{0,I}w_0\chi}$).

\begin{theorem}\label{ver-thmG}
Let $\chi\in X^*_+(T)$ be a dominant character. The map $\vfr_\chi$ extends to a morphism of vector bundles $\vfr_\chi \colon  \Vcal_{\Delta}(\chi) \to \Vcal_{\Delta}(\chi^{[1]})$ over $\GZip^{\mu}$. Furthermore, the image of $\vfr_\chi$ is $\Vcal_I(w_{0,I}w_0\chi^{[1]})$ and $\vfr_\chi$ extends the Verschiebung map of $\rho = \rho_{I,w_{0,I}w_0\chi}$. In other words, there is a commutative diagram
$$\xymatrix@1@M=6pt{
\Vcal_{\Delta}(\chi) \ar[r]^-{\vfr_\chi} & \Vcal_I(w_{0,I}w_0\chi^{[1]})  
\ar@{^{(}->}[r] 
& \Vcal_{\Delta}(\chi^{[1]}). \\
\Vcal_I(w_{0,I}w_0\chi)  
\ar@{^{(}->}[u] 
\ar[ru]_-{\vfr_{\rho}} & &
}$$
We call $\vfr_\chi$ the Verschiebung homomorphism of $\Vcal_{\Delta}(\chi)$.
\end{theorem}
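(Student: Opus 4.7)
The plan is to take the map $\vfr_\chi$ already defined on the $\mu$-ordinary open substack $\Ucal_\mu$ (via the $L(\FF_q)$-equivariance of $\Pi_\chi$ together with Lemma \ref{lem-Umu-sections}) and show it extends regularly to all of $\GZip^\mu$, using the arc regularity results \ref{Gamma-extends} and \ref{lemma-Ffalpha}. The remaining assertions then follow by density.

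Via the identification \eqref{globquot}, $\vfr_\chi$ corresponds to a regular map $f \colon U_\mu \to \End(V_\Delta(\chi))$ with the twisted $E$-equivariance appropriate for a morphism $\Vcal_\Delta(\chi) \to \Vcal_\Delta(\chi)^{[1]}$, uniquely determined by $f(1) = \Pi_\chi$. Since $U_\mu$ is open dense in $G$, the extension is unique and automatically $E$-equivariant, so by the closure relations \eqref{equ-closure-rel} it suffices to check regularity across each codimension-one $E$-orbit $G_{w_{0,I}s_\alpha w_0}$ for $\alpha \in \Delta^P$ (Theorem \ref{thm-E-orb-param}). For each such $\alpha$, I would parametrize a transverse slice by the element $\phi_\alpha\left(\begin{smallmatrix}t & 0 \\ 1 & t^{-1}\end{smallmatrix}\right)$ (which lies in the open orbit for $t \neq 0$ and crosses into the boundary at $t = 0$), in the spirit of \cite[Corollary 3.1.5]{Imai-Koskivirta-vector-bundles}. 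Expanding $f$ along this slice via the $E$-equivariance and the explicit formula for $\Pi_\chi$ from Proposition \ref{prop-PiX-isom}, and absorbing a conjugation step via Lemma \ref{Gamma-extends}, reduces regularity at $t = 0$ to the content of Lemma \ref{lemma-Ffalpha}. This bookkeeping---and ensuring that the dominance of $\chi$ together with Lemma \ref{lemma-roots} eliminates all negative powers of $t$---is the main obstacle.

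Once the extension exists, density gives $\Im(\vfr_\chi) \subset \Vcal_I(w_{0,I}w_0\chi)^{[1]}$, since this inclusion already holds on $\Ucal_\mu$ where $\vfr_\chi$ factors through $\Pi_\chi$. For the final assertion, $\Pi_\chi$ restricts to the identity on $V_I(w_{0,I}w_0\chi)$, being the projection onto $V_\Delta(\chi)^{R_{\mathrm{u}}(P)} = V_I(w_{0,I}w_0\chi)$ by Proposition \ref{prop-PiX-isom}. Since $w_{0,I}w_0\chi \in C_{\GS}$ by Lemma \ref{lemma-cone-GS}, Lemma \ref{GS-Deltapos} and Proposition \ref{propver} identify $\vfr_\rho$ for $\rho = \rho_{I,w_{0,I}w_0\chi}$ with the vector-bundle morphism induced by the identity $V_I(w_{0,I}w_0\chi) \to V_I(w_{0,I}w_0\chi)^{[1]}$. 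Hence $\vfr_\chi|_{\Vcal_I(w_{0,I}w_0\chi)}$ agrees with $\vfr_\rho$ on $\Ucal_\mu$, and by density on all of $\GZip^\mu$.
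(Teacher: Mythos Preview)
Your approach is correct in spirit and converges on the same key input (Lemma \ref{lemma-Ffalpha}), but the paper takes a more packaged route. Rather than working geometrically with transverse slices and $E$-equivariance as you propose, the paper invokes an algebraic extension criterion from \cite[Remark 5.1.3]{Imai-Koskivirta-vector-bundles}, which directly characterizes when a given $L(\FF_q)$-equivariant map $V \to V^{[1]}$ extends to a morphism of vector bundles on $\GZip^\mu$: one must check that certain combinations $\pr_\eta \circ \sum_i (-1)^i E^{(i)}_{-\sigma\alpha} \Pi_\chi E^{(j-qi)}_{-\alpha}$ vanish. Because the image of $\Pi_\chi$ is $R_{\mathrm{u}}(P)$-invariant, all terms with $i>0$ drop out, and after weight bookkeeping the condition becomes $\Pi_\chi E^{(j)}_{-\alpha}(V_\Delta(\chi)_\nu) = 0$ for $j < \langle \nu, \alpha^\vee \rangle$; unwinding the definitions of $E^{(j)}_{-\alpha}$ and $\Pi_\chi$, this is exactly the $t$-valuation statement of Lemma \ref{lemma-Ffalpha}. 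The criterion in \cite{Imai-Koskivirta-vector-bundles} is itself distilled from precisely the kind of slice analysis you sketch (this is what \cite[Corollary 3.1.5]{Imai-Koskivirta-vector-bundles} packages), so your route is not wrong---it just re-derives rather than quotes. Your handling of the image and of compatibility with $\vfr_\rho$ matches the paper's, though note the theorem asserts the image \emph{equals} $\Vcal_I(w_{0,I}w_0\chi)^{[1]}$, not merely lies in it; the missing surjectivity comes from the surjectivity of $\Pi_\chi$ in Proposition \ref{prop-PiX-isom}.
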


\begin{proof} 
We need only show that $\vfr_\chi$ extends to a morphism of vector bundles $\vfr_\chi \colon  \Vcal_{\Delta}(\chi) \to \Vcal_{\Delta}(\chi^{[1]})$. The other assertions follow immediately from the fact that $\Pi_\chi$ extends the identity map of $V_I(w_{0,I}w_0\chi)$. As explained in \cite[Remark 5.1.3]{Imai-Koskivirta-vector-bundles}, we need to show that for all $\alpha \in \Delta^P$, all $\eta \in X^*(T)$ and $j\in \NN$ such that $j ( 1-\langle \alpha,\delta_{\alpha}  \rangle) > \langle \eta,\delta_{\alpha} \rangle$, we have 
\begin{equation}\label{cond-eq}
 \pr_{\eta} \circ
 \sum_{0 \leq i \leq \frac{j}{q}} (-1)^{i} 
  E_{-\sigma\alpha}^{(i)}  \Pi_\chi E_{-\alpha}^{(j -qi)} =0     
\end{equation}
where $\pr_{\eta}$ denotes the projection 
\[
\Hom (V_{\Delta}(\chi),V_{\Delta}(\chi^{[1]})) \simeq 
 \bigoplus_{\nu,\nu' \in X^*(T)} \Hom (V_{\Delta}(\chi)_{\nu},V_{\Delta}(\chi^{[1]})_{\nu'}) 
 \to 
 \bigoplus_{\nu \in X^*(T)} \Hom (V_{\Delta}(\chi)_{\nu},V_{\Delta}(\chi^{[1]})_{\nu+\eta}). 
\] 
Since the image of $\Pi_{\chi}$ is $V_I(w_{0,I}w_0 \chi)= V_{\Delta}(\chi)^{R_{\mathrm{u}}(P)}$, we have $E_{-\sigma\alpha}^{(i)}  \Pi_\chi=0$ except for $i=0$. Hence, we can write condition \eqref{cond-eq} as  $\pr_\eta( \Pi_\chi E_{-\alpha}^{(j)})=0$ for all $j\in \ZZ$ such that $j ( 1-\langle \alpha,\delta_{\alpha}  \rangle) > \langle \eta,\delta_{\alpha} \rangle$. The map $\Pi_\chi E_{-\alpha}^{(j)}$ takes $V_{\Delta}(\chi)_\nu$ to $V_{\Delta}(\chi^{[1]})_{q\sigma^{-1}(\nu-j\alpha)}$, so it suffices to check that $\Pi_\chi E_{-\alpha}^{(j)}$ is zero on $V_{\Delta}(\chi)_\nu$ for $\nu$ such that $\eta =q\sigma^{-1}(\nu-j\alpha)-\nu$. We have $q \langle \sigma^{-1}\nu, \delta_{ \alpha} \rangle -\langle \nu,\delta_\alpha \rangle = -\langle \nu, \alpha^\vee \rangle$, hence $\langle \eta,\delta_{\alpha} \rangle = -\langle \nu, \alpha^\vee \rangle - qj \langle \sigma^{-1}\alpha, \delta_\alpha \rangle$. 
Therefore, the condition $j ( 1-\langle \alpha,\delta_{\alpha}  \rangle) > \langle \eta,\delta_{\alpha} \rangle$ becomes $j ( 1-\langle \alpha,\delta_{\alpha}\rangle +q \langle \alpha, \delta_{\sigma \alpha}   \rangle) + \langle \nu,\alpha^\vee \rangle >0$. Using again that $q \langle \alpha, \delta_{\sigma \alpha} \rangle =\langle \alpha,\delta_\alpha \rangle -\langle \alpha, \alpha^\vee \rangle$, it is also equivalent to $j(1-\langle \alpha, \alpha^\vee \rangle)=-j >-\langle \eta, \alpha^\vee \rangle$. Hence, it suffices to show that for $j<\langle \nu, \alpha^\vee \rangle$, one has $\Pi_\chi E_{-\alpha}^{(j)}(V_{\Delta}(\chi)_\nu)=0$. Furthermore, we can assume that $\langle \nu, \alpha^\vee \rangle>0$ (since $E_{-\alpha}^{(j)}=0$ for $j<0$). Now, let $f\in V_{\Delta}(\chi)_\nu$, viewed as a function $f \colon G\to \AA^1$. Recall that $\Ker(\Pi_\chi)$ consists of functions satisfying $f(xw_{0})=0$ for all $x\in L$. By the definition of $E_{-\alpha}^{(j)}$, we must show that for all $x\in L$, the expression
\begin{equation}
    f \left( \phi_{\alpha} \left(\begin{matrix}
1&0\\t&1
\end{matrix} \right) x w_0  \right) \in k[t,t^{-1}]
\end{equation}
has $t$-valuation $\geq \langle \nu, \alpha^\vee \rangle$. Since $f$ is a eigenvector for $\nu$, this amounts to saying that 
\begin{equation}
    f \left( \alpha(t) \phi_{\alpha} \left(\begin{matrix}
1&0\\t&1
\end{matrix} \right) x w_0  \right) = f \left( \phi_{\alpha} \left(\begin{matrix}
t&0\\1&t^{-1}
\end{matrix} \right) x w_0  \right)
\end{equation}
is a polynomial in $k[t]$. Hence, the result follows from Lemma \ref{lemma-Ffalpha}.
\end{proof}

\begin{example}
Continuing Example \ref{exabvar}, the vector bundle $\Mcal=H^1_{\dR}(A/S)$ is given by $\Mcal=\Vcal(\Std_{\GL_{2n}}^\vee)=\Vcal_\Delta(\lambda_\Mcal)$ with $\lambda_\Mcal = -e_{2g}$. The Verschiebung $V\colon \Mcal \to \Mcal^{(q)}$ corresponds to the map $\vfr_{\lambda_\Mcal}$ afforded by Theorem \ref{ver-thmG}. 
\end{example}

\section{Consequences}

\subsection{Decomposition of Schubert sections}

We return to the following setting: Let $G$ be a connected, reductive group over $\FF_q$ and $\mu \colon \GG_{\mathrm{m},k}\to G_k$ a cocharacter. Let $\Zcal_\mu =(G,P,L,Q,M)$ be the attached zip zatum. Fix a frame $(B,T,z)$ where $z=\sigma(w_{0,I})w_0$ (see \ref{sec-frames}). Furthermore, we put $z_1=w_{0,I} w_0$. Hence, if $L$ is defined over $\FF_q$, then $z=z_1$.

Let $\chi\in X_{+}^*(T)$ be a dominant character. Let $h_\chi\in H^0(\Sbt,\Vcal_{\Sbt}(\chi,-w_0\chi))$ be a non-zero section as in \eqref{hchi}. Multiplication by $h_\chi$ gives a map $\Vcal_{\Sbt}(-\chi,0)\to \Vcal_{\Sbt}(0,-w_0 \chi)$. 
By Lemma \ref{lemma-cone-GS}, the character $-w_{0,I}\chi$ lies in $C_{\GS}$, in particular it is $I$-dominant. To simplify, we write $V\colonequals V_I(-w_{0,I}\chi)$ and $\rho \colonequals \rho_{I,-w_{0,I}\chi}|_B$. 
Fix a nonzero element $v_{\rm low}\in V$ in the lowest weight line of $V$. 
This choice identifies the $B$-subrepresentation $k v_{\rm low}$ with $-\chi$. We obtain a natural map $\iota_\chi$ of $B$-representations $\iota_\chi \colon {-\chi} \to \rho$ given by the inclusion $k v_{\rm low}\subset V$. This induces a morphism
\[\Vcal_{\Sbt}(\iota_\chi)\colon \Vcal_{\Sbt}(-\chi,0) \to \Vcal_{\Sbt}(\rho,\rho_0).\]
Recall that $\rho_0$ denotes the trivial representation of $B$ on $V$. Similarly, there is a projection map of $B$-representations $p_\chi\colon \rho \to -w_{0,I}\chi$ given by projection onto the highest weight line of $V$. Twisting by $z_1$, we have $p_\chi[z_1]\colon \rho[z_1] \to -w_{0}\chi$, which gives a map: 
\[\Vcal_{\Sbt}(p_\chi[z_1])\colon \Vcal_{\Sbt}(\rho_0,\rho[z_1])\to \Vcal_{\Sbt}(0,-w_{0}\chi) .\]

\begin{proposition}\label{prop-maps-Vsbt}
Let $\rho = \rho_{I,-w_{0,I}\chi}|_B$. If we put $u_1\colonequals\Vcal_{\Sbt}(\iota_\chi)$, $u_2\colonequals \Vcal_{\Sbt}(p_\chi[z_1])$ and $u_\rho$ is the map afforded by Theorem \ref{urho-theorem}, then the composition \begin{equation}
\Vcal_{\Sbt}(-\chi,0)\xrightarrow{u_1} \Vcal_{\Sbt}(\rho,\rho_0)\xrightarrow{u_\rho} \Vcal_{\Sbt}(\rho_0,\rho[z_1])\xrightarrow{u_2} \Vcal_{\Sbt}(0,-w_0 \chi)
\end{equation}
coincides with the multiplication by $h_\chi$  up to a nonzero scalar.
\end{proposition}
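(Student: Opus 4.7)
The strategy is to use that the Hom-space between the two line bundles on $\Sbt$ is at most one-dimensional, spanned by multiplication by $h_\chi$, and then to verify on the open stratum $\Sbt_{w_0}$ that the composition $u_2\circ u_\rho\circ u_1$ does not vanish. Note first that $\chi$ dominant forces $-w_{0,I}\chi\in C_{\GS}$ by Lemma \ref{lemma-cone-GS}, hence $V_{\GS}=V$ by Lemma \ref{GS-Deltapos}, so the hypothesis of Theorem \ref{urho-theorem} is satisfied and $u_\rho$ is indeed a globally defined morphism of vector bundles on $\Sbt$. In particular the proposed composition $u_2\circ u_\rho\circ u_1$ makes sense as a morphism of line bundles over the whole of $\Sbt$, not merely over $\Sbt_{w_0}$.

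The tensor identification
\[
 \Hom_{\Sbt}\bigl(\Vcal_{\Sbt}(-\chi,0),\,\Vcal_{\Sbt}(0,-w_0\chi)\bigr)=H^0\bigl(\Sbt,\Vcal_{\Sbt}(\chi,-w_0\chi)\bigr),
\]
together with the discussion at the start of \S\ref{subsec-Schubert-sections}, shows that the right-hand side is at most one-dimensional and, because $\chi$ is dominant, the section $h_\chi$ extends from $\Sbt_{w_0}$ to all of $\Sbt$ and spans this line. Hence one automatically has $u_2\circ u_\rho\circ u_1 = c\cdot h_\chi$ for some scalar $c\in k$, and it suffices to prove $c\neq 0$.

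To prove non-vanishing, evaluate everything on $\Sbt_{w_0}\simeq [1/S_{w_0}]$ (Lemma \ref{lemma-sections-Sbtw0}(ii)). There, by construction \eqref{u-rho-map}, the map $u_\rho$ is represented by the linear endomorphism $\rho(w_{0,I})\colon V\to V$, while $u_1$ is the inclusion $kv_{\rm low}\hookrightarrow V$ and $u_2$ is the projection $V\twoheadrightarrow kv_{\rm high}$. The composition acting on $v_{\rm low}$ therefore computes $p_\chi\bigl(\rho(w_{0,I})\, v_{\rm low}\bigr)$. Since $v_{\rm low}$ has $T$-weight $w_{0,I}(-w_{0,I}\chi)=-\chi$ (the lowest weight of $V_I(-w_{0,I}\chi)$), the vector $\rho(w_{0,I})\,v_{\rm low}$ has weight $w_{0,I}(-\chi)=-w_{0,I}\chi$, so it lies in the one-dimensional highest weight line $k v_{\rm high}$, and it is nonzero because $\rho(w_{0,I})$ is invertible. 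Applying $p_\chi$ then yields a nonzero scalar, proving $c\neq 0$.

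The main subtlety lies not in any deep computation but in correctly tracking the $z_1$-twist appearing in $u_2=\Vcal_{\Sbt}(p_\chi[z_1])$ and the identifications $kv_{\rm low}\simeq -\chi$ and $kv_{\rm high}\simeq -w_{0,I}\chi$ of $B$-representations under which $\iota_\chi$ and $p_\chi$ are defined; one must in particular check that $(-w_{0,I}\chi)[z_1]=-w_0\chi$, which is a direct consequence of $z_1=w_{0,I}w_0$. Once this bookkeeping is in place, the argument is purely formal, leveraging only the fact that $w_{0,I}$ swaps the lowest and highest weight lines of $V_I(-w_{0,I}\chi)$. No further computation on non-open strata is needed, since the Hom-space uniqueness reduces everything to the open cell.
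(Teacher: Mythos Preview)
Your proof is correct and follows essentially the same approach as the paper: reduce to showing the composition is nonzero using one-dimensionality of $H^0(\Sbt,\Vcal_{\Sbt}(\chi,-w_0\chi))$, then verify nonvanishing on $\Sbt_{w_0}$ by observing that $\rho(w_{0,I})$ carries $v_{\rm low}$ into $kv_{\rm high}\setminus\{0\}$. You include a few extra justifications (the $C_{\GS}$ check, the explicit weight computation, the $z_1$-twist bookkeeping) that the paper leaves implicit, but the argument is the same.
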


\begin{proof}
It suffices to show that the composition $u\colonequals u_2\circ u_\rho \circ u_1$ is nonzero. Indeed, in this case $u$ gives rise to a section in $H^0(\Sbt,\Vcal_{\Sbt}(\chi,-w_0 \chi))$. Since this space is one-dimensional (see \S\ref{subsec-Schubert-sections}), it coincides with $h_\chi$ up to a nonzero scalar. If $(\rho_1,\rho_2)$ and $(\rho'_1,\rho'_2)$ are $B\times B$-representations with underlying vector spaces $V$, $V'$ respectively, we may view a map $\Vcal_{\Sbt}(\rho_1,\rho_2) \to \Vcal_{\Sbt}(\rho'_1,\rho'_2)$ as a $k$-linear map $V\to V'$ satisfying properties (i), (ii) of section \ref{sec-glob-sec-Sbt}. First, the map $u_1$ corresponds to the inclusion $kv_{\rm low} \subset V$. Choose a nonzero vector $v_{\rm high}$ in the highest weight line of $V$. Then, $u_2$ is the  projection $p_\chi \colon  V\to k v_{\rm high}$. Finally, by construction the map $u_\rho$ corresponds to $\rho(w_{0,I})\colon V\to V$. Since $\rho(w_{0,I})v_{\rm low}\in kv_{\rm high}$, it is clear that the composition is nonzero.
\end{proof}

By \S\ref{subsec-Schubert-sections}, there is an associated Schubert section $\Ha_\chi=\psi^*(h_\chi)$, which is a section of $\Vcal_{\flag}(\chi-qw_{0,I}(\sigma^{-1}\chi))$. Multiplication by $\psi^*(h_\chi)$ gives a map $\Vcal_{\flag}(-\chi) \to \Vcal_{\flag}(-qw_{0,I}(\sigma^{-1}\chi))$. We obtain a decomposition of $\Ha_\chi$, as follows.

\begin{corollary} \label{cor-decomp-Schubert-Fq}
Assume that $P$ is defined over $\FF_q$. Write $\Vcal\colonequals \Vcal_{I}(-w_{0,I}\chi)$. There exist natural maps of vector bundles on $\GF^\mu$:
\begin{equation}\label{eq-cor-decompos-Schubert-sec-Fq}
\Vcal_{\flag}(-\chi) \xrightarrow{\xi_1} \pi^*\Vcal \xrightarrow{\pi^*\vfr_\rho} \pi^*\Vcal^{[1]} \xrightarrow{\xi_3} \Vcal_{\flag}(-qw_{0,I}(\sigma^{-1}\chi))
\end{equation}
such that the composition coincides with multiplication with $\Ha_\chi$.
\end{corollary}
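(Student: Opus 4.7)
The plan is to derive the decomposition by pulling back, via the map $\psi \colon \GF^\mu \to \Sbt$ from \eqref{eq-GF-to-Sbt}, the factorization of multiplication by $h_\chi$ on $\Sbt$ given by Proposition \ref{prop-maps-Vsbt}. Since $P$ is defined over $\FF_q$, the subset $I\subset \Delta$ is $\sigma$-stable, so in particular $z = \sigma(w_{0,I})w_0 = w_{0,I}w_0 = z_1$, which makes the twisting $[z^{-1}]$ and $[z_1^{-1}]$ agree.

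First I would identify the four vector bundles on the chain \eqref{eq-cor-decompos-Schubert-sec-Fq} as pullbacks via $\psi$ of the corresponding vector bundles on $\Sbt$. Using \eqref{pullbackformula}, an easy computation gives $\psi^*\Vcal_{\Sbt}(-\chi,0) = \Vcal_{\flag}(-\chi)$ and, using $zw_0=w_{0,I}$ together with $\sigma$-stability of $I$, $\psi^*\Vcal_{\Sbt}(0,-w_0\chi) = \Vcal_{\flag}(-qw_{0,I}(\sigma^{-1}\chi))$. For the middle terms, \eqref{Rrhorep} together with \eqref{pullbackV} yields $\psi^*\Vcal_{\Sbt}(\rho,\rho_0) = \Vcal_{\flag}(\rho|_B) = \pi^*\Vcal$, and using that $\rho[z_1](z_1^{-1}\varphi(\theta^P_L(b))z_1) = \rho(\varphi(\theta^P_L(b)))$ one obtains $R(\rho_0,\rho[z_1]) = \rho^{[1]}|_B$, so $\psi^*\Vcal_{\Sbt}(\rho_0,\rho[z_1]) = \pi^*\Vcal^{[1]}$.

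Next, I define $\xi_1 \colonequals \psi^* u_1$ and $\xi_3 \colonequals \psi^* u_2$, and invoke Proposition \ref{prop-w0I-Ver} which states precisely that $\psi^* u_\rho = \pi^*\vfr_\rho$. Applying $\psi^*$ to the factorization of Proposition \ref{prop-maps-Vsbt}, which (up to a nonzero scalar) expresses multiplication by $h_\chi$ as $u_2\circ u_\rho\circ u_1$, gives the composition in \eqref{eq-cor-decompos-Schubert-sec-Fq} and identifies it (up to scalar) with multiplication by $\psi^* h_\chi = \Ha_\chi$, which is what we want.

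There is essentially no obstacle; this corollary is formal consequence of Proposition \ref{prop-maps-Vsbt} and the compatibility of pullbacks. The only point requiring mild care is the bookkeeping in the weight computations above, particularly the use of $\sigma(I) = I$ to rewrite $\sigma^{-1}(w_{0,I}\chi) = w_{0,I}(\sigma^{-1}\chi)$ so that the target line bundle matches the one appearing in the Schubert section.
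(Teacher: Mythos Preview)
Your proposal is correct and takes essentially the same approach as the paper: pull back the factorization of Proposition \ref{prop-maps-Vsbt} via $\psi$ and invoke Proposition \ref{prop-w0I-Ver} (which requires $P$ defined over $\FF_q$) to identify $\psi^*u_\rho$ with $\pi^*\vfr_\rho$. The paper's own proof is a single sentence citing Proposition \ref{prop-w0I-Ver}; you have simply spelled out the vector bundle identifications (already recorded just before that proposition) and the weight bookkeeping that the paper leaves implicit.
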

\begin{proof}
Since $P$ is defined over $\FF_q$, we may apply Proposition \ref{prop-w0I-Ver} to get $\pi^*\vfr_\rho= \psi^*u_\rho$.
\end{proof}

\subsection{Primitiveness of partial Hasse invariants}

For $\lambda\in X^*(T)$, we write $L_I(\lambda)\subset V_I(\lambda)$ for the unique irreducible $L$-representation of highest weight $\lambda$ (see \ref{subsec-remind}). Denote by $\Vcal_I^L(\lambda)\subset \Vcal_I(\lambda)$ the subbundle attached to the irreducible $L$-subrepresentation $L_I(\lambda)\subset V_I(\lambda)$.

\begin{definition}
We say that $f\in H^0(\GZip^\mu, \Vcal_I(\lambda))$ is primitive if it lies in the subspace $H^0(\GZip^\mu, \Vcal^L_I(\lambda))$.
\end{definition}
Recall that there is an injective map $\ev_1 \colon H^0(\GZip^\mu, \Vcal_I(\lambda))\to V_I(\lambda)$ defined in \eqref{injection-ev1}.

\begin{lemma}
A section $f\in H^0(\GZip^\mu, \Vcal_I(\lambda))$ is primitive if and only if $\ev_1(f)\in L_I(\lambda)$.
\end{lemma}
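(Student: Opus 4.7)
The plan is to use naturality of $\ev_1$ together with density of the $\mu$-ordinary orbit.

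First I would observe that the inclusion of $L$-representations (equivalently, $P$-representations trivial on $R_{\mathrm{u}}(P)$) $L_I(\lambda)\hookrightarrow V_I(\lambda)$ induces, via the exact functor $\Vcal$, an inclusion of vector bundles $\Vcal_I^L(\lambda)\hookrightarrow \Vcal_I(\lambda)$. Taking global sections, we obtain an injection $H^0(\GZip^\mu,\Vcal_I^L(\lambda))\hookrightarrow H^0(\GZip^\mu,\Vcal_I(\lambda))$. Moreover, the evaluation map $\ev_1$ is natural in the representation, so we have a commutative square whose vertical arrows are the two inclusions above and the horizontal arrows are $\ev_1$ for $L_I(\lambda)$ and $V_I(\lambda)$. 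The ``only if'' direction is then immediate: if $f$ is primitive, then $\ev_1(f)$ lies in $L_I(\lambda)\subset V_I(\lambda)$.

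For the converse, I would pass to the explicit description \eqref{globquot}, viewing a section $f\in H^0(\GZip^\mu,\Vcal_I(\lambda))$ as a regular map $\widetilde{f}\colon G_k\to V_I(\lambda)$ satisfying $\widetilde{f}(agb^{-1})=\rho_{I,\lambda}(\theta^P_L(a))\widetilde{f}(g)$ for all $(a,b)\in E$ and $g\in G_k$. Assume $\ev_1(f)=\widetilde{f}(1)\in L_I(\lambda)$. For any $(a,b)\in E$, the formula
\[
\widetilde{f}(ab^{-1})=\rho_{I,\lambda}(\theta^P_L(a))\widetilde{f}(1)
\]
shows that $\widetilde{f}(ab^{-1})\in L_I(\lambda)$, since $L_I(\lambda)$ is stable under the $L$-action (it is a subrepresentation). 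Hence $\widetilde{f}$ maps the $E$-orbit $U_\mu$ of $1$ into $L_I(\lambda)$.

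The key point is then that $U_\mu\subset G_k$ is Zariski dense (see the discussion after Theorem \ref{thm-E-orb-param}) and $L_I(\lambda)\subset V_I(\lambda)$ is a closed subset (a linear subspace). Since $\widetilde{f}$ is a regular map and agrees with a map into $L_I(\lambda)$ on the dense open $U_\mu$, it factors as a regular map $\widetilde{f}\colon G_k\to L_I(\lambda)$. Translating back via \eqref{globquot}, this says precisely that $f$ comes from a section of $\Vcal_I^L(\lambda)$, \emph{i.e.} $f$ is primitive. There is no serious obstacle here; the main step is the density argument, which is immediate from the existence of the open $E$-orbit $U_\mu$ containing $1$.
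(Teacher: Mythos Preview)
Your proof is correct and follows essentially the same approach as the paper: both directions use the commutative square coming from naturality of $\ev_1$, and the ``if'' direction is settled by the density of the $\mu$-ordinary orbit $U_\mu\subset G_k$ together with the closedness of the linear subspace $L_I(\lambda)\subset V_I(\lambda)$. The only cosmetic difference is that the paper routes the argument through Lemma~\ref{lem-Umu-sections} (identifying $H^0(\Ucal_\mu,\Vcal(\rho))$ with $V^{L_\varphi}$), whereas you unpack the $E$-equivariance formula directly on $U_\mu$; these amount to the same thing.
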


\begin{proof}
More generally, let $V_1\subset V_2$ two $P$-representations and let $\Vcal_1\subset \Vcal_2$ be the corresponding vector bundles on $\GZip^\mu$. We have a commutative diagram
$$\xymatrix@1@M=5pt{
H^0(\GZip^\mu,\Vcal_1) \ar@{^{(}->}[d] \ar@{^{(}->}[r]^-{\ev_1} & V_1 \ar@{^{(}->}[d] \\
H^0(\GZip^\mu,\Vcal_2) \ar@{^{(}->}[r]^-{\ev_1} & V_2.
}$$
We claim that $H^0(\GZip^\mu,\Vcal_1)$ is the intersection of $H^0(\GZip^\mu,\Vcal_2) $ and $V_1$ inside $V_2$. Assume that $f\in H^0(\GZip^\mu,\Vcal_2)$ satisfies $\ev_1(f)\in V_1$. Since $V_1\cap V_2^{L_\varphi}=V_1^{L_\varphi}$, we obtain by Lemma \ref{lem-Umu-sections} that $f\in H^0(\Ucal_\mu,\Vcal_1)$. The claim follows by density of $\Ucal_\mu$.
\end{proof}

We study the primitiveness of (flag) partial Hasse invariants. We do not assume that $P$ is defined over $\FF_q$. For an $L$-representation $(V,\rho)$, define $(V^{(1)},\rho^{(1)})$ as follows. Let $L_1$ be the split form of $L$ over $\FF_q$ and let $\varphi_1\colon L_k\to L_k$ be the Frobenius homomorphism of $L_1$. We set $V^{(1)}=V$ and $\rho^{(1)}=\rho \circ \varphi_1$. Let $\chi\in X_{+,I}^*(T)$. Then $V_I(\chi)^{(1)} \subset V_I(q\chi)$ is the sub-$L$-representation whose underlying space is the image of the (non-linear) map $V_I(\chi)\to V_I(q\chi)$, $f\mapsto f^q$. 
For $\lambda\colonequals \chi-qw_{0,I}(\sigma^{-1}\chi)$, we obtain maps
\begin{equation}\label{eq-nat-map}
V_I(\chi)\otimes V_I(-w_{0,I}(\sigma^{-1}\chi))^{(1)}\to V_I(\chi)\otimes V_I(-qw_{0,I}(\sigma^{-1}\chi)) \to V_I(\lambda)
\end{equation}
which are morphisms of $L$-representations.

\begin{proposition}\label{propinimage}
Let $\chi\in X_{+}^*(T)$ and set $\lambda=\chi-qw_{0,I}(\sigma^{-1}\chi)$. The Schubert line $V_{I,\Sbt}(\lambda)$ is contained in the image of $V_I(\chi)\otimes V_I(-w_{0,I}(\sigma^{-1}\chi))^{(1)}$ by the map \eqref{eq-nat-map}.
\end{proposition}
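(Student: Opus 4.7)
The plan is to use the factorization of the Schubert section $\Ha_\chi$ given by Corollary \ref{cor-decomp-Schubert-Fq}. After reducing to the case in which $P$ is defined over $\FF_q$, this corollary presents $\Ha_\chi$ as the composition
\[
\Vcal_{\flag}(-\chi) \xrightarrow{\xi_1} \pi^*\Vcal_I(-w_{0,I}\chi) \xrightarrow{\pi^*\vfr_\rho} \pi^*\Vcal_I(-w_{0,I}\chi)^{[1]} \xrightarrow{\xi_3} \Vcal_{\flag}(-qw_{0,I}(\sigma^{-1}\chi))
\]
on $\GF^\mu$, where $\xi_1$ is induced by the inclusion of the lowest weight line of $V_I(-w_{0,I}\chi)$ and $\xi_3$ by the projection onto the highest weight line of $V_I(-w_{0,I}\chi)^{[1]}$.

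Next, I would tensor this sequence with $\Vcal_{\flag}(\chi)$ so that the composition becomes multiplication by $\Ha_\chi$ as a section of $\Vcal_{\flag}(\lambda)$, and then apply the pushforward $\pi_*$. Using the projection formula together with the identity $\pi_*\Vcal_{\flag}(\chi)=\Vcal_I(\chi)$ (which follows from Proposition \ref{prop-push-flag-zip} since $\chi$ is dominant, hence $I$-dominant), this yields a factorization on $\GZip^\mu$
\[
\Ocal \xrightarrow{s_1} \Vcal_I(-w_{0,I}\chi)\otimes \Vcal_I(\chi) \xrightarrow{\vfr_\rho \otimes \id} \Vcal_I(-w_{0,I}\chi)^{[1]}\otimes \Vcal_I(\chi) \xrightarrow{\mu} \Vcal_I(\lambda)
\]
whose composition corresponds to $\Ha_\chi$ under the identification $H^0(\GF^\mu,\Vcal_{\flag}(\lambda))=H^0(\GZip^\mu,\Vcal_I(\lambda))$.

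To conclude, I would analyze the map $\mu$ at the representation level. Unwinding the definitions via Frobenius reciprocity, $\mu$ sends $v\otimes f$ (with $v\in V_I(-w_{0,I}\chi)^{[1]}$ and $f\in V_I(\chi)$) to the product $g_v\cdot f\in V_I(\lambda)$, where $g_v(\ell)=p_{\rho^{[1]}}(\rho^{[1]}(\ell^{-1})v)$ is the matrix coefficient of $\rho^{[1]}$ paired with the highest weight functional. The map $v\mapsto g_v$ is the canonical Frobenius embedding $V_I(-w_{0,I}\chi)^{[1]}\hookrightarrow V_I(-qw_{0,I}\sigma^{-1}\chi)$, whose image coincides with that of $V_I(-w_{0,I}\sigma^{-1}\chi)^{(1)}$. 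Composing with the multiplication $V_I(\chi)\otimes V_I(-qw_{0,I}\sigma^{-1}\chi)\to V_I(\lambda)$ shows that the image of $\mu$ is contained in the image of \eqref{eq-nat-map}. Since $\ev_1(\Ha_\chi)$ lies in the image of $\mu$, it lies in the image of \eqref{eq-nat-map}, which proves the claim.

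The main obstacle I anticipate is carefully identifying the two Frobenius twists $V_I(-w_{0,I}\chi)^{[1]}$ (built from the zip-datum Frobenius $\varphi$) and $V_I(-w_{0,I}\sigma^{-1}\chi)^{(1)}$ (built from the split-form Frobenius $\varphi_1$), especially when $T$ is not split: they agree as subrepresentations of $V_I(-qw_{0,I}\sigma^{-1}\chi)$ only up to a Galois twist that must be tracked. If the proposition is intended to hold without the hypothesis that $P$ is defined over $\FF_q$, one should instead work with the $\Sbt$-level factorization of $h_\chi$ provided by Proposition \ref{prop-maps-Vsbt} and carry out an analogous pushforward computation there.
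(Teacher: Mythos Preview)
Your approach is genuinely different from the paper's, and it has two gaps that prevent it from being a complete proof.

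First, the reduction to the case where $P$ is defined over $\FF_q$ is asserted but never justified, and Corollary~\ref{cor-decomp-Schubert-Fq} really does require that hypothesis. The element $\ev_1(\Ha_\chi)\in V_I(\lambda)$ depends on the zip datum (through $z$ and through the evaluation at $1\in U_\mu$), so one cannot simply replace the given $P$ by an $\FF_q$-defined parabolic without altering the object one is trying to locate. Your fallback suggestion of using Proposition~\ref{prop-maps-Vsbt} on $\Sbt$ does not immediately help: the identification $\psi^*u_\rho=\pi^*\vfr_\rho$ is precisely what Proposition~\ref{prop-w0I-Ver} supplies \emph{under} the assumption that $P$ is $\FF_q$-defined; without it, $\psi^*u_\rho$ is a map of vector bundles on $\GF^\mu$ that need not be pulled back from $\GZip^\mu$, so your projection-formula pushforward argument breaks down.

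Second, even granting the $\FF_q$ assumption, the identification of $V_I(-w_{0,I}\chi)^{[1]}$ with $V_I(-w_{0,I}\sigma^{-1}\chi)^{(1)}$ as subrepresentations of $V_I(-qw_{0,I}\sigma^{-1}\chi)$ is flagged as an obstacle and left unresolved; this is not automatic when $T$ is non-split.

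By contrast, the paper's proof is a short explicit computation that works in full generality. It realizes $h_\chi$ as a matrix coefficient of the $G$-representation $V_\Delta(\chi)$: with $v_{\rm low}$ a lowest-weight vector and $p_\chi$ the projection onto the highest-weight line, the function $\Delta_\chi(g)=p_\chi(\rho(g)v_{\rm low})$ is the $B\times B$-eigenfunction of weight $(\chi,-w_0\chi)$. One then evaluates $\ev_1(\Ha_\chi)$ as the function on $L$ given by $x\mapsto \langle \rho(\varphi(x)z)v_{\rm low},\,\rho^\vee(x)p_\chi\rangle$. Choosing any basis of $V_\Delta(\chi)$, this expands as $\sum_i {}^{\sigma^{-1}}A_i(x)^q\, C_i(x)$, and a direct check of the $B_L$-equivariance (using $z=\sigma(w_{0,I})w_0$) shows $C_i\in V_I(\chi)$ and ${}^{\sigma^{-1}}A_i\in V_I(-w_{0,I}\sigma^{-1}\chi)$. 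This lands the Schubert section in the image of \eqref{eq-nat-map} with no hypothesis on $P$ and no need to compare the two Frobenius twists.
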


\begin{proof}
Let $\chi\in X^*_+(T)$ and write $\rho$ for a $G$-representation $V_{\Delta}(\chi)$. Let $p_\chi \in V_{\Delta}(\chi)^{\vee}$ be a $B$-eigenvector in $V_{\Delta}(\chi)^\vee$ for the weight $-\chi$ afforded by Lemma \ref{lemma-proj}. It satisfies $p_{\chi}(\rho(b) x) = \chi(b)p_{\chi}(x)$ for all $x\in G$ and all $b\in B$. Let $v_{\rm low}\in V_{\Delta}(\chi)$ be a nonzero vector in the lowest weight line. Consider the function $\Delta_\chi \colon  G\to \AA^1$ defined by 
\begin{equation}
\Delta_\chi(x)=p_{\chi}(\rho(x) v_{\rm low}), \quad x\in G.
\end{equation}
We have for all $x\in G$ and all $(b,b')\in B\times B$:
\begin{equation}
    \Delta_\chi(bxb'^{-1}) =  p_{\chi}(\rho(bxb'^{-1}) v_{\rm low}) = \chi(b)p_{\chi}(\rho(x) ((w_0\chi)(b')^{-1}v_{\rm low})) =\chi(b)(w_0\chi)(b')^{-1} \Delta_\chi(x).
\end{equation}
It follows that $\Delta_\chi$ is a $B\times B$-eigenfunction and it lies in the space $H^0(\Sbt, \Vcal_{\Sbt}(\chi,-w_0\chi))$. 
Put again $\lambda=\chi-qw_{0,I}\sigma^{-1}(\chi)$. Then $f_\lambda \colonequals \psi^*(\Delta_\chi)$ is a Schubert section of weight $\lambda$. Explicitly, $f_\lambda$ is a function $G_k \to k$ satisfying $f_\lambda(x)= \Delta_\chi(xz)$ for all $x\in G_k$. Let $f_{\lambda,I}$ be the element of 
$H^0(\GZip^\mu,\Vcal_I(\lambda))$ corresponding to $f_{\lambda}$ under \eqref{ident-H0-GF}. Viewing $f_{\lambda,I}$ in $V_I(\lambda)$ via $\ev_1$, it is given as follows: For all $x\in L$, one has
\begin{equation}
\ev_1(f_{\lambda,I})(x)=f_\lambda(x^{-1}\varphi(x)) = p_{\chi}(\rho(x^{-1} \varphi(x) z)v_{\rm low}).
\end{equation}
Let $\langle -,-\rangle \colon V_{\Delta}(\chi)\times V_{\Delta}(\chi)^{\vee} \to k$ be the natural pairing. We can write for all $x\in L$:
\begin{align}
\ev_1(f_{\lambda,I}) (x) & = \langle \rho(x^{-1}\varphi(x)z) v_{\rm low}, p_{\chi} \rangle \\
& = \langle \rho(\varphi(x)z) v_{\rm low}, \rho^{\vee}(x) p_{\chi} \rangle. 
\end{align}
Choose a basis $\Bcal \colonequals (e_1, \dots ,e_r)$ of $V_{\Delta}(\chi)$ and let $\Bcal^\vee \colonequals (e^\vee_1, \dots ,e^\vee_r)$ denote the dual basis. For all $x\in G$, we define $A(x) \colonequals \rho(xz)v_{\rm low}$ and write $A(x)=(A_1(x), \dots ,A_r(x))$ for the coordinates of $A(x)$ with respect to $\Bcal$. Similarly $C(x)\colonequals \rho^{\vee}(x) p_{\chi}$ and $C(x)=(C_1(x),\dots ,C_r(x))$ with respect to $\Bcal^\vee$. Hence we have for all $x\in L$, 
$\ev_1(f_{\lambda,I}) (x)=\sum_{i=1}^r A_i(\varphi(x)) C_i(x)$. Since $G$ is defined over $\FF_q$, we may also consider the functions ${}^{\sigma^{-1}} A_i  \colon  G\to \AA^1$. We obtain
\begin{equation}\label{decomp-Sbt-equ-V2}
    \ev_1(f_{\lambda,I}) (x)=\sum_{i=1}^r {}^{\sigma^{-1}}A_i(x)^q \  C_i(x).
\end{equation}
Note that for all $x\in L$ and all $b\in B_L$, we have:
\begin{align}
    A(xb) &= \chi(\sigma(w_{0,I})b \sigma(w_{0,I})^{-1}) A(x), \label{eq-A-V} \\
    C(xb) &= \chi(b)^{-1} C(x).  \label{eq-C-V}
\end{align}
Hence for $1\leq i \leq r$, the functions $x\mapsto C_i(x)$ lie in $V_I(\chi)$ and the functions $x\mapsto {}^{\sigma^{-1}} A_i(x)$ lie in $V_I(-w_{0,I}\sigma^{-1}\chi)$. This shows that $\ev_1(f_{\lambda,I})$ lies in the image of the map \eqref{eq-nat-map}. 
\end{proof}

Let $\alpha\in \Delta$. Consider the following condition on a character $\chi_{\alpha}$:
\begin{condition}\label{condition-funda} 
\ \begin{definitionlist}
\item One has $0 < \langle \chi_{\alpha},\alpha^\vee \rangle<q$ 
and $\chi_{\alpha}$ is orthogonal to $\beta^{\vee}$ for all $\beta\in \Delta \setminus \{\alpha\}$. 
\item One has $L_I(\chi_{\alpha})=V_I(\chi_{\alpha})$ and $L_I(-w_{0,I}(\sigma^{-1}\chi_{\alpha}))=V_I(-w_{0,I}(\sigma^{-1}\chi_{\alpha}))$.
\end{definitionlist}
\end{condition}

\begin{theorem}\label{thm-LIlambda}
Let $\alpha\in \Delta$ and assume that $\chi_{\alpha}\in X^*(T)$ satisfies Condition \ref{condition-funda}. Denote by $f_\alpha\colonequals \psi^*(h_{\chi_{\alpha}})$ the corresponding Schubert section, of weight $\lambda_\alpha \colonequals \chi_{\alpha}-qw_{0,I}(\sigma^{-1}\chi_{\alpha})$. We have the following properties:
\begin{assertionlist}
\item $f_\alpha$ is a flag partial Hasse invariant for the flag stratum $\Ccal_{s_\alpha w_0}$.
\item $f_\alpha$ is a primitive automorphic form on $\GZip^\mu$.
\end{assertionlist}
\end{theorem}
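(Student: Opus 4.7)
For part (1), the plan is to apply Chevalley's formula \eqref{divflam} to the section $h_{\chi_\alpha}$ on $\Sbt$ and pull back via $\psi$. Condition \ref{condition-funda}(a) forces $\langle \chi_\alpha, \beta^\vee\rangle = 0$ for every $\beta \in \Delta \setminus \{\alpha\}$, so the sum in \eqref{divflam} collapses to the single summand $\langle \chi_\alpha, \alpha^\vee\rangle\,\overline{Bw_0 s_\alpha B}$ with strictly positive coefficient. Because the smooth surjective morphism $\psi\colon \GF^\mu \to \Sbt$ identifies the flag stratum $\Ccal_w$ with the preimage of $\Sbt_w$ for each $w \in W$ (see \S\ref{subsec-zipflag}), pulling back shows that $\div(f_\alpha)$ is a positive multiple of the Zariski closure of a single codimension-one flag stratum. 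This verifies Definition \ref{def-flag-PHI}.

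For part (2), the plan is to combine Proposition \ref{propinimage} with Steinberg's tensor product theorem (Theorem \ref{thm-L-decomp}) applied to the Levi $L$. By Proposition \ref{propinimage}, $\ev_1(f_\alpha)$ lies in the image of
\[
V_I(\chi_\alpha)\otimes V_I(-w_{0,I}\sigma^{-1}\chi_\alpha)^{(1)} \longrightarrow V_I(\lambda_\alpha).
\]
By Condition \ref{condition-funda}(b), the two factors on the left coincide with their socles $L_I(\cdot)$, so it suffices to show that the image of the $L$-equivariant map
\[
L_I(\chi_\alpha)\otimes L_I(-w_{0,I}\sigma^{-1}\chi_\alpha)^{(1)} \longrightarrow V_I(\lambda_\alpha)
\]
is contained in $L_I(\lambda_\alpha)$.

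To apply Steinberg's theorem, write $\lambda_\alpha = \mu_0 + q\sigma^{-1}\mu_1$ with $\mu_0 = \chi_\alpha$ and $\mu_1 = -\sigma w_{0,I}\sigma^{-1}\chi_\alpha$. A short verification, using that $-w_{0,I}$ and $\sigma$ each permute $I$ (the latter because $L$ is defined over $\FF_q$), combined with the bound $0 < \langle \chi_\alpha, \alpha^\vee\rangle < q$ and the orthogonality in Condition \ref{condition-funda}(a), shows $\mu_0, \mu_1 \in X_1^*(T)$ relative to $L$. Theorem \ref{thm-L-decomp} then yields
\[
L_I(\lambda_\alpha)\simeq L_I(\mu_0)\otimes L_I(\mu_1)^{[1]}.
\]
Now $L_I(\mu_1)^{[1]}$ and $L_I(-w_{0,I}\sigma^{-1}\chi_\alpha)^{(1)}$ are both irreducible $L$-modules with common highest weight $-qw_{0,I}\sigma^{-1}\chi_\alpha$, hence are isomorphic. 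Consequently the domain in Proposition \ref{propinimage} is an irreducible $L$-module isomorphic to $L_I(\lambda_\alpha)$, and its image in $V_I(\lambda_\alpha)$ necessarily lies in the unique irreducible submodule of highest weight $\lambda_\alpha$, namely the socle $L_I(\lambda_\alpha)$. This proves that $f_\alpha$ is primitive.

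The main technical subtlety is keeping the two Frobenius-twist notations $(\cdot)^{[1]}$ and $(\cdot)^{(1)}$ straight: Proposition \ref{propinimage} naturally produces a twist of type $(\cdot)^{(1)}$, whereas Theorem \ref{thm-L-decomp} is phrased using $(\cdot)^{[\bullet]}$, so the highest-weight identification above is needed to bridge them. Everything else is a direct assembly of results already established in the excerpt.
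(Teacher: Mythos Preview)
Your proof of (1) and your overall strategy for (2) match the paper's approach exactly: invoke Proposition~\ref{propinimage}, replace $V_I$ by $L_I$ via Condition~\ref{condition-funda}(b), and then use Steinberg's tensor product theorem for $L$ to identify the resulting tensor product with $L_I(\lambda_\alpha)$.

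There is, however, a genuine slip in how you apply Theorem~\ref{thm-L-decomp}. You assert that $\sigma$ permutes $I$ ``because $L$ is defined over $\FF_q$'', but \S6.2 explicitly opens with ``We do not assume that $P$ is defined over $\FF_q$'', and in the inert unitary example of \S\ref{sec-gp-inert} the Levi $L$ is \emph{not} defined over $\FF_q$. Consequently your twist $(\cdot)^{[1]}$ for $L$ is not defined, and the decomposition $\lambda_\alpha=\mu_0+q\sigma^{-1}\mu_1$ with your choice of $\mu_1$ has no meaning. The repair, which is what the paper does, is to apply Theorem~\ref{thm-L-decomp} to the \emph{split} form $L_1$ of $L$: there the Galois action on $X^*(T)$ is trivial, the twist $(\cdot)^{[1]}$ for $L_1$ is by definition $(\cdot)^{(1)}$, and Steinberg reads simply $L_I(\lambda_0+q\lambda_1)\simeq L_I(\lambda_0)\otimes L_I(\lambda_1)^{(1)}$ for $\lambda_0,\lambda_1\in X^*_{1,I}(T)$. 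With $\lambda_0=\chi_\alpha$ and $\lambda_1=-w_{0,I}\sigma^{-1}\chi_\alpha$, one checks both lie in $X^*_{1,I}(T)$ using only that $-w_{0,I}$ permutes $I$ and that $\sigma$ permutes $\Delta$ (no hypothesis that $\sigma$ stabilize $I$ is needed, since Condition~\ref{condition-funda}(a) makes $\chi_\alpha$ orthogonal to every simple coroot except $\alpha^\vee$). This yields $L_I(\chi_\alpha)\otimes L_I(-w_{0,I}\sigma^{-1}\chi_\alpha)^{(1)}\simeq L_I(\lambda_\alpha)$ directly, and your bridging step between the two Frobenius twists becomes unnecessary.
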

\begin{proof}
The first statement follows from (a) of Condition \ref{condition-funda}. We now show (2). View $f_\alpha$ as an element of $V_I(\lambda_\alpha)$. By part (b), we deduce from Proposition \ref{propinimage} that $f_\alpha$ lies in the image of the map $L_I(\chi_{\alpha})\otimes L_I(-w_{0,I}(\sigma^{-1}\chi_{\alpha}))^{(1)} \to V_I(\lambda_\alpha)$ given by \eqref{eq-nat-map}. We put 
\[
 X_{1,I}^*(T) = \{ \chi \in X^* (T) \mid \textrm{$0 \leq \langle \chi,\alpha^\vee \rangle<q$ for all $\alpha \in I$} \}
\]
(\cf \cite[II, \S3.15]{jantzen-representations}). 
By part (a) of Condition \ref{condition-funda}, $\chi_{\alpha}$ and $-w_{0,I}(\sigma^{-1}\chi_{\alpha})$ are in $X_{1,I}^*(T)$. By Theorem \ref{thm-L-decomp}, we have $L_I(\chi_{\alpha})\otimes L_I(-w_{0,I}(\sigma^{-1}\chi_{\alpha}))^{(1)}=L_I(\lambda_{\alpha})$. The result follows.
\end{proof}

\begin{rmk}
Let $\chi \in X^*(T)$. 
In \cite[Theorem 1.1 and \S2]{Garibaldi-Guralnick-nakano-globally-irreducible}, 
it is studied when $L_I(\chi)=V_I(\chi)$ holds for all $p$. 
On the other hand, 
$L_I(\chi)=V_I(\chi)$ holds for sufficiently large $p$ by 
\cite[II, 5.6 Corollary]{jantzen-representations}. 
\end{rmk}

\section{Examples}\label{sec-examples}

\subsection{The symplectic case}
The Siegel Shimura variety $\Ascr_n$ of rank $n$ is attached to the reductive group $\mathbf{G}=\GSp(\mathbf{V}_0,\mathbf{\psi})$, where $(\mathbf{V}_0,\mathbf{\psi})$ is a symplectic space of rank $n$ over $\QQ$. The special fiber $\Ascr_{n,\FF_p}$ at a place of good reduction admits a smooth surjective map to $\GZip^\mu$ where $G$ is the special fiber of a reductive $\ZZ_p$-model of $\mathbf{G}_{\QQ_p}$, and $\mu$ is the usual cocharacter attached to the Siegel-type Shimura datum. We may define the flag space of $\Ascr_{n,\FF_p}$ as the fiber product:
\begin{equation}\label{flag-eq}
\xymatrix@1@M=5pt{
\Flag(\Ascr_{n,\FF_p}) \ar[r] \ar[d] & \GF^\mu \ar[d] \\
\Ascr_{n,\FF_p} \ar[r]^-{\zeta} & \GZip^\mu.
}    
\end{equation}
We obtain by pullback a stratification on $\Flag(\Ascr_{n,\FF_p})$ as well as (flag) partial Hasse invariants for each of the codimension one strata of $\Flag(\Ascr_{n,\FF_p})$. 
In this section, we compute these partial Hasse invariants explicitly in the case $G=\Sp(V_0,\psi)$ (where $(V_0,\psi)$ is a symplectic space over $\FF_p$). The case of $G=\GSp(V_0,\psi)$ is completely similar. Furthermore, we give the modular interpretation of these sections in section \ref{moduli-sp}.

\subsubsection{\texorpdfstring{The group $G$}{}}\label{thegroup}
We consider the case when $G$ is the reductive $\FF_q$-group $\Sp(V_0,\psi)$, where $(V_0,\psi)$ is a non-degenerate symplectic space over $\FF_q$ of dimension $2n$, for some integer $n\geq 1$. After choosing an appropriate basis $\Bcal$ for $V_0$, we assume that $\psi$ is given by the matrix
\begin{equation}
\begin{pmatrix}
& -J \\
J&\end{pmatrix}
\quad \textrm{ where } \quad
J\colonequals \begin{psmallmatrix}
&&1 \\
&\iddots& \\
1&&
\end{psmallmatrix}. 
\end{equation} 
Define $G$ as follows:
\begin{equation}\label{group}
G(R) = \{f\in \GL_{\FF_{q}}(V_0\otimes_{\FF} R) \mid  \psi_R(f(x),f(y))=\psi_R(x,y), \ \forall x,y\in V_0\otimes_{\FF_q} R \}
\end{equation}
for all $\FF_q$-algebras $R$. Identify $V_0=k^{2n}$ and view $G$ as a subgroup of $\GL_{2n,\FF_q}$. Fix the $\FF_q$-split maximal torus $T$ given by diagonal matrices in $G$, i.e.
\begin{equation}
T(R)\colonequals \{ \diag_{2n}(x_1,\ldots ,x_n,x^{-1}_n,\ldots,x^{-1}_1) \mid x_1, \ldots ,x_n\in R^\times \}.
\end{equation}
Define $B$ as the Borel subgroup of $G$ consisting of the lower-triangular matrices in $G$. For a tuple $(a_1,\dots,a_n)\in \ZZ^n$, define a character of $T$ by mapping $\diag_{2n}(x_1, \dots ,x_n,x^{-1}_n, \dots ,x^{-1}_1)$ to $x_1^{a_1} \cdots x_n^{a_n}$. From this, we obtain an identification $X^*(T) = \ZZ^n$. Denoting by $(e_1,\dots ,e_n)$ the standard basis of $\ZZ^n$, the $T$-roots of $G$ and the $B$-positive roots are respectively
\begin{align}
\Phi&\colonequals \{\pm e_i \pm e_j \mid 1\leq i \neq j \leq n\} \cup \{ \pm 2e_i \mid 1\leq i \leq n \}, \\
\Phi_+&\colonequals \{e_i \pm e_j \mid 1\leq i< j \leq n\} \cup \{ 2e_i \mid 1\leq i \leq n\} 
\end{align}
and the $B$-simple roots are $\Delta\colonequals \{\alpha_1,\dots , \alpha_{n-1},\beta\}$ where 
\begin{align*}
\alpha_i&\colonequals e_{i+1}-e_i \textrm{ for } i=1,...,n-1 ,\\ \beta&\colonequals 2e_n.
\end{align*}
The Weyl group $W\colonequals W(G,T)$ can be identified with the group of permutations $\sigma \in \Sfr_{2n}$ satisfying $\sigma(i)+\sigma(2n+1-i)=2n+1$ for all $1\leq i \leq 2n$.

Define a cocharacter $\mu \colon \GG_{\mathrm{m},\FF_q}\to G$ by $z\mapsto \diag(zI_n,z^{-1}I_n)$. Write $\Zcal\colonequals (G,P,L,Q,M,\varphi)$ for the associated zip datum (since $\mu$ is defined over $\FF_q$, we have $M=L$). Concretely, if we denote by $(u_i)_{i=1}^{2n}$ the canonical basis of $k^{2n}$, then $P$ is the stabilizer of $V_{0,P}=\Span_k(u_{n+1},...,u_{2n})$ and $Q$ is the stabilizer of $V_{0,Q}=\Span_k(u_{1},...,u_{n})$. The intersection $L\colonequals P\cap Q$ is the 
common Levi subgroup, which is isomorphic to $\GL_{n,\FF_q}$.

\subsubsection{Partial Hasse invariants}

There is a unique stratum of codimension one in $\GZip^\mu$. It is the vanishing locus of the (ordinary) Hasse invariant $\Ha_\mu\in H^0(\GZip^\mu,\omega^{q-1})$. 
Concretely, this section is given by
\begin{equation}
\Ha_\mu \colon \left(\begin{matrix}
A&*\\
*&*
\end{matrix} \right) \mapsto \det(A).
\end{equation}
The (flag) partial Hasse invariants are given as follows. For $1\leq d\leq n$, define a function $\Delta_d \colon \GL_n\to \AA^1$ by
\begin{equation}\label{eq-hd}
\Delta_d(A)=\left| \begin{matrix}
a_{1,n+1-d} & a_{1,n+2-d} & \cdots & a_{1,n}\\
a_{2,n+1-d} & a_{2,n+2-d} & \cdots & a_{2,n} \\
\vdots & \vdots & & \vdots \\
a_{d,n+1-d} & a_{d,n+2-d} & \cdots & a_{d,n}
\end{matrix} \right| \qquad \textrm{for } A=(a_{i,j})_{1\leq i,j\leq n}.
\end{equation}
Define $\Ha_d \colon G\to \AA^1$ by 
\[
 \left(\begin{matrix}
A&*\\
*&*
\end{matrix} \right) \mapsto \Delta_d(A). 
\] 
Then $\Ha_d$ is a section in $H^0(\GF^\mu, \Vcal_{\flag}(\lambda_d))$ for the weight
\begin{equation}
\lambda_d = (\underbrace{1, \dots , 1}_{\text{$d$ times}}, \underbrace{0, \dots , 0}_{\text{$n-d$ times}} ) - (\underbrace{0, \dots , 0}_{\text{$n-d$ times}}, 
\underbrace{q, \dots , q}_{\text{$d$ times}} ).
\end{equation}
The family $\{\Ha_d\}_{1\leq d \leq n}$ is a complete set of flag partial Hasse invariants. Note that $\Ha_n$ coincides with the classical Hasse invariant $\Ha_\mu$. This illustrates Lemma \ref{Lemma-zipflag}, which states that any zip partial Hasse invariant becomes a flag partial Hasse invariant when pulled back to $\GF^\mu$ (since $P$ is defined over $\FF_q$ in this example).

\subsubsection{Primitiveness}\label{sec-prim-sp}

Write $\chi_d=(1,\dots ,1, 0 \dots ,0)$, where $1$ appears $d$ times and $0$ appears $n-d$ times. Then we have $\lambda=\chi_d-qw_{0,I}\chi_d$. 
One can show easily that $\chi_d$ satisfies Condition \ref{condition-funda}. Hence, by Theorem \ref{thm-LIlambda}, the section $\Ha_d$ is a primitive automorphic form on $\GZip^\mu$, in the sense that it corresponds to an element of $L_I(\lambda_d)\subset V_I(\lambda_d)$. We check this in the case $d=1$ (the computation for larger $d$ is slightly tedious). The element $\ev_1(\Ha_d) \in V_I(\lambda_d)$ corresponding to $\Ha_d$ via \eqref{injection-ev1} is the function
\begin{equation}
 \ev_1(\Ha_d)  \colon  L\to \AA^1 ,\quad A\mapsto \Delta_d(A^{-1} \varphi(A)).
\end{equation}
When $d=1$, we can write this function as
\begin{equation}\label{eq-evHa}
  \ev_1(\Ha_1)(A) = \frac{1}{\delta} \sum_{i=1}^n (-1)^{i+1} \delta_{i} a_{i,n}^q, \ \textrm{ where } \ \delta_{i}=\left| 
\begin{matrix}
a_{1,2} & \cdots & a_{1,n} 
\\
\vdots & & \vdots 
\\
a_{i-1,2} & \cdots & a_{i-1,n} 
\\
a_{i+1,2} & \cdots & a_{i+1,n} 
\\
\vdots & & \vdots 
\\
a_{n,2} & \cdots & a_{n,n} 
\end{matrix}
\right|
\end{equation}
and $\delta$ is the determinant $\GL_n\to \GG_{\mathrm{m}}$. One shows easily that the function $\delta_i$ lies in $L_I(\chi_1)$ and that $a_{i,n}$ lies in $L_I(-w_{0,I}\chi_1)$. Thus, $\ev_1(\Ha_1)$ lies in $L_I(\chi_1)\otimes L_I(-w_{0,I}\chi_1^{[1]}) = L_I(\lambda_1)$. For $d>1$, the function $\ev_1(\Ha_d)$ admits a similar expansion in terms of minors of the matrix $A$. Using the formula given in equation \eqref{decomp-Sbt-equ-V2} in the proof of Proposition \ref{propinimage}, there is an expansion with $\dim(V_{\Delta}(\chi))$ summands (for $\chi=\chi_d$). Actually, when $P$ is defined over $\FF_q$, one can modify the proof of Proposition \ref{propinimage} by using the representation $V_I(\chi)$ instead of $V_{\Delta}(\chi)$. Hence, in this case we expect an expansion with $\dim(V_I(\chi_d))=\binom{n+d-1}{d}$ summands (for $d=1$, we indeed have $n$ terms in \eqref{eq-evHa}).

We give a counter-example showing that in general, Schubert sections do not lie in $L_I(\lambda)$. Take $G=\Sp(4)$ and consider $\Ha_1^m$ with $m\colonequals q^2-q+1$. Set $f_1=\ev_1(\Ha_1)\in L_I(1,-q)$. We claim that the section $f_1^{m}\in V_I(m,-qm)$ does not lie in $L_I(m,-qm)$. We can write $m(1,-q)=m(1,1)+(0,-1)+q^3(0,-1)$. By Theorem \ref{thm-L-decomp}, we see that 
\[L_I(m,-qm)\simeq \delta^{-m}\otimes L_I(0,-1)\otimes L_I(0,-1)^{[3]}\]
where $\delta $ is the determinant $\GL_2\to \GG_{\mathrm{m}}$. By equation \eqref{eq-evHa}, we can write $f_1=\delta^{-1}(x^q y-xy^q)$ for a certain basis $(x,y)$ of $L_I(0,-1)$. Then $L_I(m,-qm)$ has dimension 4, with basis \[
 \delta^{-m}x^{q^3+1},\  \delta^{-m}x^{q^3}y,\   \delta^{-m}x y^{q^3},\  \delta^{-m}y^{q^3+1}. 
\] 
It is easy to see that the expansion of $f_1^{m}=\delta^{-m}(x^q y-xy^q)^m$ involves also other monomials, hence it does not lie in $L_I(m,-qm)$.

\subsubsection{Modular interpretation}\label{moduli-sp}

We now explain the modular interpretation of partial Hasse invariants. We refer to \cite[\S 8]{Pink-Wedhorn-Ziegler-F-Zips-additional-structure} for the modular interpretation of symplectic zips. Let $S$ be an $\FF_q$-scheme and $n\geq 1$ an integer. Define a symplectic zip of rank $n$ as a tuple $\underline{\Mcal}=(\Mcal, F, V, \Omega, \langle -, - \rangle, \iota)$ satisfying the following:
\begin{equivlist}
    \item $\Mcal$ is a locally free $\Ocal_S$-module of rank $2n$ and $\Omega\subset \Mcal$ is a locally free $\Ocal_S$-submodule of rank $n$ which is Zariski locally a direct factor of $\Mcal$. 
    \item $\langle - , - \rangle  \colon \Mcal\times \Mcal \to \Ocal_S$ is a perfect $\Ocal_S$-bilinear pairing.
    \item $F \colon \Mcal^{(q)}\to \Mcal$ and $V \colon \Mcal\to \Mcal^{(q)}$ are maps of vector bundles.
    \item We have $\Im(F)=\Ker(V)$ and $\Im(V)=\Ker(F)=\Omega^{(q)}$.
    \item We have $\langle Fx, y \rangle = \langle x,Vy \rangle$ for all $x,y\in \Mcal$.
\end{equivlist}
Note that \cite[\S 8]{Pink-Wedhorn-Ziegler-F-Zips-additional-structure} uses a slightly different formulation. Instead of maps $(F,V)$, the authors define an F-zip using two filtrations of $\Mcal$ and isomorphisms between the graded pieces. In the above description, the corresponding filtrations are given by $\Omega$ and $\Im(F)$. The isomorphisms between the graded pieces are precisely induced by $F$ and $V$.

\begin{example}\label{ex-var-ab}
Let $(A,\chi)$ be an abelian scheme over $S$ together with a principal polarization. Then we obtain a symplectic F-zip by defining $\Mcal=H^1_{\dR}(A/S)$ and $\Omega$ as the Hodge filtration of $\Mcal$. The polarization induces a perfect pairing on $\Mcal$, and the Frobenius and Verschiebung maps of $A$ give rise to similar maps on $\Mcal$.
\end{example}

There is an obvious notion of morphisms of symplectic $F$-zips over $S$. The symplectic zips form a stack over $\FF_q$, which is isomorphic to $\GZip^\mu$ for $G=\Sp(2n)_{\FF_q}$ and $\mu \colon z\mapsto \diag(z I_n,z^{-1}I_n)$. Via this isomorphism, the inclusion $\Omega\subset \Mcal$ between vector bundles corresponds to the inclusion of $P$-representations $V_{0,P} \subset V_0$. Note that the highest weights of $V_{0,P}$ and $V_0$ are respectively $-e_n \in C_{\GS}$ and $e_1\in X_{+}^*(T)$. This is consistent with the fact that $\Omega$ and $\Mcal$ admit Verschiebung maps $\Omega\subset \Mcal \xrightarrow{V} \Omega^{(q)}$ and $\Mcal \xrightarrow{V} \Mcal^{(q)}$, as predicted by Lemma \ref{GS-Deltapos} and Theorem \ref{ver-thmG}.

We define a flagged symplectic zip of rank $n$ over $S$ as a pair $(\underline{\Mcal},\Fcal_\bullet)$ where $\underline{\Mcal}=(\Mcal, F, V, \Omega, \langle -, - \rangle)$ is a symplectic zip of rank $n$ over $S$ and $0\subset \Fcal_1\subset \dots \subset \Fcal_n=\Omega$ is a filtration by locally free, locally direct factor $\Ocal_S$-submodules such that $\rank(\Fcal_i)=i$. This flag extends uniquely to a symplectic flag of $\Mcal$ by defining $\Fcal_{i}=\Fcal_{i-n}^{\perp}$ for all $n<i\leq 2n$. 
For all $1\leq i \leq n$, we define a line bundle $\Lcal_i=\Fcal_i/\Fcal_{i-1}$ (with the convention $\Fcal_0=0$). For a tuple $\underline{a}=(a_1,\dots , a_n)\in \ZZ^n$, we define a line bundle
\begin{equation}
    \Lcal(\underline{a})=\bigotimes_{i=1}^n \Lcal_i^{\otimes -a_i}.
\end{equation}
The minus signs in the above formula account 
for our choice of $B$ as the lower-triangular Borel subgroup. Indeed, $B$ is the stabilizer of the flag $0\subset \Span_k(e_{2n})\subset  \Span_k(e_{2n-1},e_{2n})\subset \dots \subset \Span_k(e_{n+1},\dots,e_{2n})\subset V_{0,P}$, and $T$ acts on the graded pieces of this flag by $t_1^{-1},\dots ,t_n^{-1}$ in this order.

The stack of flagged symplectic zips is isomorphic to the stack $\GF^\mu$. We now explain the modular interpretation of partial Hasse invariants. Let $1\leq d \leq n$. Consider the composition
\begin{equation}\label{fi-eq}
\xymatrix@M=5pt{
    \Fcal_d \ar[r] & \Omega \ar[r]^{V} & \Omega^{(q)} \ar[r] & \Omega^{(q)}/\Fcal^{(q)}_{n-d} . 
}
\end{equation}
The first map is the inclusion $\Fcal_d\subset \Omega$, and the third map is the natural projection. Denote by $f_d$ the composition of the maps \eqref{fi-eq}. We have 
\begin{equation}
    \det(\Fcal_d)=\Lcal_1\otimes \dots \otimes \Lcal_d, \qquad  \det \left(\Omega^{(q)}/\Fcal^{(q)}_{n-d}\right) = \Lcal^q_{n-d+1}\otimes \dots \otimes \Lcal^q_n.
\end{equation}
Therefore, $f_d$ induces a map $\wedge^d(f_d) \colon \Lcal_1\otimes \dots \otimes \Lcal_d \to \Lcal^q_{n-d+1}\otimes \dots \otimes \Lcal^q_n$. Hence, we obtain a section $\Ha_d=\wedge^d(f_d)\in H^0(S,\Lcal(\lambda_d))$ which corresponds to the flag partial Hasse invariant given by \eqref{eq-hd}.

\subsection{The unitary inert case}
This example arises in the theory of Shimura varieties attached to general unitary groups. More precisely, let $\mathbf{E}/\QQ$ be a quadratic totally imaginary extension and $(V_0,\psi)$ a hermitian space over $\mathbf{E}$ of dimension $n$. Then there is a Shimura variety of PEL-type attached to the group $\mathbf{G}=\GU(V_0,\psi)$. If we write $n=\dim_E(V_0)$, then the attached Shimura variety parametrizes abelian varieties of dimension $n$ with a polarization, an action of $\Ocal_{\mathbf{E}}$ and a level structure. Let $p$ be a prime of good reduction, and let $X$ be the special fiber of the Kisin--Vasiu (canonical) integral model of the Shimura variety. By \eqref{zeta-Shimura}, we have a smooth, surjective morphism $\zeta \colon X\to \GZip^\mu$, where $G$ is the special fiber of a reductive $\ZZ_p$-model of the group $\mathbf{G}_{\QQ_p}$. If $p$ is inert in $\mathbf{E}$, then $G$ is a unitary group over $\FF_p$. If $p$ is split in $\mathbf{E}$, then $G\simeq \GL_{n,\FF_p}\times \GG_{m,\FF_p}$. 
Define again the flag space $\Flag(X)$ of $X$ as the fiber product of $X$ and $\GF^\mu$ over $\GZip^\mu$, similarly to \eqref{flag-eq}. We explain the modular interpretation of partial Hasse invariants on $\Flag(X)$. 
We will first assume that $p$ is an inert prime in $\mathbf{E}$. To simplify, we consider the case of a unitary group $G=\U(V_0,\psi)$ (the case of $G=\GU(V_0,\psi)$ is very similar).

\subsubsection{Group theory}\label{sec-gp-inert}

Let $(V_0,\psi)$ be an $n$-dimensional $\FF_{q^2}$-vector space endowed with a non-degenerate hermitian form $\psi \colon V_0\times V_0\to \FF_{q^2}$ (in the context of Shimura varieties, we always take $q=p$). Choose a basis $\Bcal$ of $V_0$ where $\psi$ is given by the matrix 
\[
 J= \begin{pmatrix}
&&1\\& \iddots &\\1&&\end{pmatrix}. 
\]
Define a reductive group $G$ by
\begin{equation}
G(R) = \{f\in \GL_{\FF_{q^2}}(V_0\otimes_{\FF_q} R) \mid  \psi_R(f(x),f(y))=\psi_R(x,y), \ \forall x,y\in V_0\otimes_{\FF_q} R \}
\end{equation}
for any $\FF_q$-algebra $R$. There is an isomorphism $G_{\FF_{q^2}}\simeq \GL(V_0)$. It is induced by the $\FF_{q^2}$-algebra isomorphism $\FF_{q^2}\otimes_{\FF_q} R\to R\times R$, $a\otimes x\mapsto (ax,\sigma(a)x)$ (where $\Gal(\FF_{q^2}/\FF_q)=\{\id, \sigma\}$). The action of $\sigma$ on the set $\GL_n(k)$ is given by $\sigma\cdot A = J \sigma({}^t \!A)^{-1}J$. Let $T$ denote the maximal diagonal torus and $B$ the lower-triangular Borel subgroup of $G_k$. By our choice of the basis $\Bcal$, the groups $B$ and $T$ are defined over $\FF_q$. Identify $X^*(T)=\ZZ^n$ such that $(a_1, \dots ,a_n)\in \ZZ^n$ corresponds to the character $\diag(x_1,\dots ,x_n)\mapsto \prod_{i=1}^n x_i^{a_i}$. The simple roots are given by $\Delta=\{e_i-e_{i+1} \mid 1\leq i \leq n-1 \}$, where $(e_1, \dots ,e_n)$ is the canonical basis of $\ZZ^n$. 

Choose non-negative integers $(r,s)$ such that $n=r+s$. We will assume that $r\geq s$. Define a cocharacter $\mu  \colon  \GG_{\mathrm{m},k}\to G_{k}$ by $x\mapsto \diag(xI_r,I_s)$ via the identification $G_{k}\simeq \GL_{n,k}$. Let $\Zcal_{\mu}=(G,P,L,Q,M,\varphi)$ be the associated zip datum. Concretely, if we denote by $(u_i)_{i=1}^{n}$ the canonical basis of $k^{n}$, then $P$ is the stabilizer of $V_{0,P}\colonequals \Span_k(u_{r+1},...,u_{n})$. Note that $P$ is not defined over $\FF_q$ unless $r=s$. We may also identify $L=\GL_{r}\times \GL_{s}$.  One has $\Delta^P=\{\alpha\}$ with $\alpha=e_r-e_{r+1}$. The element $z=\sigma(w_{0,I})w_0$ is given by the matrix
\begin{equation}
    z=\left( 
    \begin{matrix}
    0 & I_s \\ I_r & 0    
    \end{matrix} \right).
\end{equation}
Consider again the function $\Delta_d \colon G_k\to \AA^1$ defined in \eqref{eq-hd}. It is clear that $\Delta_d$ is a $B\times B$-eigenfunction on $G_k$. It is a section of $\Vcal_{\Sbt}(\chi_d,-w_0\chi_d)$ for the character $\chi_d=(1, \dots, 1, 0, \dots,0)$, where $1$ appears $d$ times and $0$ appears $n-d$ times. Specifically, $\Delta_d$ is the function $h_{\chi}$ for $\chi=\chi_d$, as defined in \eqref{hchi}. The function $G_k\to \AA^1_k$, $g\mapsto \Delta_d(gz)$ corresponds to the pull-back $\Ha_d=\psi^*(\Delta_d)$. The sections $\{\Ha_d\}_{1\leq d \leq n-1}$ form a complete set of partial Hasse invariants for $\GF^\mu$. If $d\leq s$, the section $\Ha_d$ lies in $H^0(\GF^\mu,\Vcal_{\flag}(\lambda_d))$, where 
\begin{equation}
    \lambda_d =\chi_d-q w_{0,I}(\sigma^{-1} \chi_d) = (\underbrace{1, \dots ,1}_{\textrm{$d$ times}},\underbrace{0, \dots ,0}_{\textrm{$n-d$ times}}) + (\underbrace{0, \dots , 0}_{\textrm{$r$ times}}, \underbrace{q, \dots ,q}_{\textrm{$d$ times}}, \underbrace{0, \dots ,0}_{\textrm{$s-d$ times}}). 
\end{equation}
Similarly, in the case $s<d\leq n-1$, the weight of $\Ha_d$ is 
\begin{equation}
    \lambda_d =\chi_d-q w_{0,I}(\sigma^{-1} \chi_d) = (\underbrace{1, \dots ,1}_{\textrm{$d$ times}},\underbrace{0, \dots ,0}_{\textrm{$n-d$ times}}) + (\underbrace{q, \dots , q}_{\textrm{$d-s$ times}}, \underbrace{0, \dots ,0}_{\textrm{$n-d$ times}}, \underbrace{q, \dots ,q}_{\textrm{$s$ times}}). 
\end{equation}

Note also that the determinant function $\det  \colon  G_k\to \GG_{\mathrm{m}}$ defines a non-vanishing section of $H^0(\GZip^\mu,\Vcal_I(\lambda_{\det}))$ where $\lambda_{\det}=(q+1, \dots , q+1)$. 
In particular, the line bundles $\Vcal_I(\lambda_{\det})$ and $\Vcal_{\flag}(\lambda_{\det})$ are trivial. Hence, the line bundles $\{\Vcal_{\flag}(\lambda)\}_\lambda$ 
on $\GF^\mu$ are parametrized up to isomorphism by the group $\ZZ^n/\ZZ \lambda_{\det}$.

\subsubsection{Modular interpretation}\label{sec-mod-inert}

We explain the modular interpretation of the sections $\Ha_d$. We first define the notion of unitary zip of type $(r,s)$. Let $S$ be an $\FF_q$-scheme and let $\underline{\Mcal}=(\Mcal, F, V, \Omega, \langle -, - \rangle)$ be a symplectic zip over $S$ of rank $n$. Consider a ring homomorphism $\iota  \colon  \FF_{q^2}\to \End_{\Ocal_S}(\underline{\Mcal})$. 
(Here, $\End_{\Ocal_S}(\underline{\Mcal})$ denotes the ring of endomorphisms of the symplectic zip $\underline{\Mcal}$). Let $k$ be an algebraic closure of $\FF_q$ and write $\iota_0, \iota_1$ for the two embeddings $\FF_{q^2}\to k$. Then $\Mcal_k$ splits naturally as
\begin{equation}
    \Mcal_k = \Mcal_0 \oplus \Mcal_1
\end{equation}
where $\alpha \in \FF_{q^2}$ acts on $\Mcal_i$ (for $i=0,1$) by multiplication with $\iota_i(\alpha)$. One sees easily that $\Mcal_0$, $\Mcal_1$ are totally isotropic. The Frobenius and Verschiebung interchange the components. More precisely, they induce maps $V \colon \Mcal_0\to \Mcal^{(q)}_1$, $V \colon \Mcal_1\to \Mcal^{(q)}_0$, $F \colon \Mcal^{(q)}_0\to \Mcal_1$ and $F \colon \Mcal^{(q)}_1\to \Mcal_0$. Similarly, we have a decomposition $\Omega_k = \Omega_0 \oplus \Omega_1$ since by definition $\Omega$ is stable by the action of $\FF_{q^2}$. 

\begin{definition}\label{def-unit-zip}
A unitary zip of type $(r,s)$ is a tuple $(\Mcal, F, V, \Omega, \langle -, - \rangle, \iota)$ satisfying the following: 
\begin{assertionlist}
\item $\underline{\Mcal}=(\Mcal, F, V, \Omega, \langle -, - \rangle)$ is a symplectic zip over $S$ of rank $n=r+s$.
    \item $\iota  \colon  \FF_{q^2}\to \End_{\Ocal_S}(\underline{\Mcal})$ is a ring homomorphism.
    \item $\rank_{\Ocal_{S_k}}(\Omega_0) = r$, $\rank_{\Ocal_{S_k}}(\Omega_1) = s$.
\end{assertionlist}

\end{definition}

Unitary zips of type $(r,s)$ form an algebraic stack over $\FF_q$ which is isomorphic to $\GZip^\mu$. By our choice of convention, the standard representation $\Std \colon G\to \GL_{n,\FF_{q}}$ corresponds to the vector bundle $\Mcal_1$. Furthermore, the sub-$P$-representation $V_{0,P}\subset V_0=k^n$ corresponds to the vector bundle $\Omega_1\subset \Mcal_1$. We define a flagged unitary zip as a tuple $(\underline{\Mcal},\iota, \Fcal_{\bullet})$ where $(\underline{\Mcal},\iota)$ is a unitary zip, and
\begin{equation}\label{filtrF}
    0=\Fcal_n\subset \Fcal_{n-1}\subset \dots \subset \Fcal_{r}=\Omega_1 \subset \dots \subset \Fcal_1\subset \Fcal_0= \Mcal_1
\end{equation}
is a filtration by locally free, locally direct factor $\Ocal_{S_k}$-submodules such that $\rank(\Fcal_{i})=n-i$ for $1\leq i \leq n$). Furthermore, put $\Lcal_i = \Fcal_{i-1}/ \Fcal_i$ (for $1\leq i \leq n$), and define
\begin{equation}
    \Lcal(\underline{a})= \bigotimes_{i=1}^n \Lcal_i^{a_i} \qquad \textrm{for } \underline{a}=(a_1, \dots , a_n)\in \ZZ^n.
\end{equation}
Our convention for the ordering of the flag \eqref{filtrF} is consistent with the choice of the Borel subgroup $B$. Indeed, $B$ stabilizes the filtration $\Span_k(e_n)\subset \Span_k(e_n,e_{n-1})\subset \dots \subset \Span_k(e_n, \dots , e_2) \subset k^n$ and the torus $T$ acts on the graded pieces respectively by $t_n, \dots , t_1$ in this order. We can extend the filtration \eqref{filtrF} uniquely to a symplectic flag of $\Mcal$ by taking the orthogonals
\begin{equation}
    \Mcal_1=\Mcal_1^\perp \subset \Fcal_1^\perp \subset \dots \subset \Fcal_r^\perp \subset \dots \subset \Fcal_{n-1}^\perp \subset \Fcal_n^\perp = \Mcal.
\end{equation}
Furthermore, we have $\Fcal_r^\perp = \Omega_1^\perp = \Omega_0\oplus \Mcal_1$. Intersecting the filtration with $\Mcal_0$, we obtain a full flag of $\Mcal_0$ as follows:
\begin{equation}
    0 \subset \Fcal_1^\perp\cap \Mcal_0 \subset \dots \subset \Fcal_r^\perp\cap \Mcal_0\subset \dots \subset \Fcal_{n-1}^\perp \cap \Mcal_0 \subset \Mcal_0
\end{equation}
and we have $\Fcal_r^\perp\cap \Mcal_0=\Omega_0$. 
For $d\leq s$, consider the composition 
\begin{equation}
\xymatrix@M=5pt{
    \Fcal_d^\perp\cap \Mcal_0 \ar[r] & \Mcal_0 \ar[r]^{V} & \Omega_1^{(p)} \ar[r] & \Omega_1^{(q)}/\Fcal^{(q)}_{r+d} . 
}
\end{equation}
It is a map of vector bundles of rank $d$. We have $\Fcal_d^\perp\cap \Mcal_0 = \frac{\Fcal_d^\perp}{\Mcal_1} =  \left(\frac{\Mcal_1}{\Fcal_d}\right)^\vee$ and hence $\det(\Fcal_d^\perp\cap \Mcal_0) = \det(\Mcal_1/\Fcal_d)^{-1}=(\Lcal_1\otimes \dots \otimes \Lcal_d)^{-1}$. Similarly, we find $\det(\Omega_1/\Fcal_{r+d})=\Lcal_{r+1}\otimes \dots \otimes \Lcal_{r+d}$. Thus, we obtain a map
\begin{equation}
    (\Lcal_1\otimes \dots \otimes \Lcal_d)^{-1} \to (\Lcal_{r+1}\otimes \dots \otimes \Lcal_{r+d})^q
\end{equation}
which is the same as a section of $H^0(S,\Lcal(\lambda_d))$. This section is the partial Hasse invariant $\Ha_d$ of weight $\lambda_d$ explained in the previous section. We now assume $s<d\leq n-1$. In this case, consider the composition map
\begin{equation}\label{partialH2}
   \frac{\Fcal_{d-s}^{(q)}}{\Omega_1^{(q)}}  \subset  \frac{\Mcal_1^{(q)}}{\Omega_1^{(q)}} \xrightarrow{ \ \ F \ \ } \Mcal_0 \longrightarrow  \frac{\Mcal_0}{\Fcal_d^\perp\cap \Mcal_0}.
\end{equation}
Taking determinants, we obtain a map $(\Lcal_{d-s+1}\otimes \dots \otimes \Lcal_r)^q \to (\Lcal_{d+1}\otimes \dots \otimes \Lcal_n)^{-1}$. Thus, we constructed a section of weight
\begin{equation}
(\underbrace{0, \dots ,0}_{\textrm{$d$ times}},\underbrace{-1, \dots , -1}_{\textrm{$n-d$ times}}) + (\underbrace{0, \dots , 0}_{\textrm{$d-s$ times}}, \underbrace{-q, \dots ,-q}_{\textrm{$n-d$ times}}, \underbrace{0, \dots ,0}_{\textrm{$s$ times}}) =  \lambda_d-\lambda_{\det} . 
\end{equation}
Up to multiplication by the determinant, the section given by \eqref{partialH2} corresponds to the partial Hasse invariant $\Ha_d$ (in particular, the vanishing loci coincide).

Lastly, we give a modular interpretation for the $\mu$-ordinary Hasse invariant $\Ha_\mu$. This is a section of a line bundle on $\GZip^\mu$ whose vanishing locus is exactly the closure of the unique codimension one stratum. The existence and properties of such sections were proved for general groups in \cite{Koskivirta-Wedhorn-Hasse}. In the general unitary case, Goldring--Nicole have constructed in \cite{Goldring-Nicole-mu-Hasse} such sections on the special fiber of the corresponding Shimura varieties at places of good reduction. Their construction is based on the crystalline cohomology $H^i_{\crys}(A)$ of an abelian variety $A$ (endowed with unitary-type additional structure). We give below another modular interpretation of $\Ha_\mu$ on the stack $\GZip^\mu$. Since the $G$-zip attached to an abelian variety $A$ is defined using $H^1_{\dR}(A)=H^1_{\crys}(A) / pH^1_{\crys}(A)$, it shows that $\Ha_\mu$ also admits an interpretation in terms of the mod $p$ reduction of the crystalline cohomology of $A$. Returning to the stack $\GZip^\mu$, consider the composition map
\begin{equation}\label{Hamu}
\xymatrix@M=5pt{
    \Omega_1 \ar[r]^V & \Omega_0^{(q)} \ar[r]^{V^{(q)}} & \Omega_1^{(q^2)}.
}
\end{equation}
Taking determinants, we obtain a section $\Ha_\mu$ of $\det(\Omega_1)^{q^2-1}=(\Lcal_{r+1}\otimes \dots \otimes \Lcal_n)^{q^2-1}$ over $\GZip^\mu$. It is easy to see that this section is non-zero, non-invertible. Hence its vanishing locus must be a codimension one closed substack of $\GZip^\mu$. Since there is a unique stratum of codimension $1$, it shows that the section $\Ha_\mu$ is a $\mu$-ordinary Hasse invariant. This would be difficult to prove directly from the modular interpretation \eqref{Hamu}. We see on this example that $\Ha_\mu$ is different from all flag partial Hasse invariants $\{\Ha_d\}_{1\leq d\leq n-1}$. This is in contrast to the case when $P$ is defined over $\FF_q$ (see Lemma \ref{Lemma-zipflag}).

In the case of $r=2$, $s=1$, we gave in \cite[Lemma 6.3.1]{Imai-Koskivirta-vector-bundles} a group-theoretical representation of $\Ha_\mu$. In \cite[Proposition 6.3.2]{Imai-Koskivirta-vector-bundles}, the section $\Ha_\mu$ has weight $(q+1,q+1,q^2+q)=\lambda_{\det}+(0,0,q^2-1)$. It coincides with the section constructed above up to multiplication by the determinant.

\subsection{The unitary split case}

We now consider the case of a unitary Shimura variety at a prime $p$ of good reduction which is split in the totally imaginary quadratic field $\mathbf{E}$. In this case, $G$ is isomorphic to $\GL_{n,\FF_p}$.

\subsubsection{Group theory}

Define $G=\GL_{n,\FF_q}$ (take $q=p$ in the context of Shimura varieties). Define a cocharacter $\mu \colon \GG_{\mathrm{m},k}\to G_k$ as in the previous section by $\mu(x)=\diag(xI_r,I_s)$. Write again $\Zcal_\mu=(G,P,L,Q,M,\varphi)$ for the attached zip datum. Let again $B$ denote the lower-triangular Borel and $T$ the diagonal torus. Again, we identify $X^*(T)=\ZZ^n$ as in \S \ref{sec-gp-inert} and define $V_{0,P}\subset V_0$ similarly. In this case, the Galois action is trivial, so the element $z=w_{0,I} w_0$ is given by the matrix
\begin{equation}
    z=\left( 
    \begin{matrix}
    0 & I_r \\ I_s & 0    
    \end{matrix} \right).
\end{equation}
Consider again the function $\Delta_d$ defined in \eqref{hchi}. The function $G_k\to \AA^1_k$, $g\mapsto \Delta_d(gz)$ corresponds to the pull-back $\Ha_d=\psi^*(\Delta_d)$. The sections $\{\Ha_d\}_{1\leq d \leq n-1}$ form a complete set of partial Hasse invariants for $\GF^\mu$. If $d\leq r$, the section $\Ha_d$ lies in $H^0(\GF^\mu,\Vcal_{\flag}(\lambda_d))$, where 
\begin{equation}
    \lambda_d =\chi_d-q w_{0,I}\chi_d = (\underbrace{1, \dots ,1}_{\textrm{$d$ times}},\underbrace{0, \dots ,0}_{\textrm{$n-d$ times}}) + (\underbrace{0, \dots , 0}_{\textrm{$r-d$ times}}, \underbrace{-q, \dots ,-q}_{\textrm{$d$ times}}, \underbrace{0, \dots ,0}_{\textrm{$s$ times}}). 
\end{equation}
 Similarly, in the case $r<d\leq n-1$, the weight of $\Ha_d$ is 
\begin{equation}
    \lambda_d =\chi_d-q w_{0,I} \chi_d = (\underbrace{1, \dots ,1}_{\textrm{$d$ times}},\underbrace{0, \dots ,0}_{\textrm{$n-d$ times}}) + (\underbrace{-q, \dots , -q}_{\textrm{$r$ times}}, \underbrace{0, \dots ,0}_{\textrm{$n-d$ times}}, \underbrace{-q, \dots ,-q}_{\textrm{$d-r$ times}}) . 
\end{equation}

The determinant function $\det  \colon  G_k\to \GG_{\mathrm{m}}$ defines a nowhere vanishing section $\det\in H^0(\GZip^\mu,\Vcal_I(\lambda_{\det}))$ where $\lambda_{\det}=(-(q-1), \dots , -(q-1))$. In particular, the line bundles $\Vcal_I(\lambda_{\det})$ and $\Vcal_{\flag}(\lambda_{\det})$ are trivial. Hence, the line bundles  $\{\Vcal_{\flag}(\lambda)\}_\lambda$ 
on $\GF^\mu$ are parametrized up to isomorphism by the group $\ZZ^n/\ZZ \lambda_{\det}$. The section $\Ha_r$ is the ordinary Hasse invariant of $\GZip^\mu$, which illustrates again Lemma \ref{Lemma-zipflag}.

\subsubsection{Partial Hasse invariants}

We give a modular interpretation for partial Hasse invariants. In this case, the corresponding notion of 
a unitary zip $(\underline{\Mcal},\iota)$ is similar to Definition \ref{def-unit-zip} except that $\iota$ is now an action of $\FF_q\times \FF_q$ on $\underline{\Mcal}$. This implies that $\Mcal$ splits again as $\Mcal=\Mcal_0\oplus \Mcal_1$, but this time the two components are stable under the Frobenius and Verschiebung maps. As a consequence, $(\underline{\Mcal},\iota)$ is completely and uniquely determined up to isomorphism by the datum $(\Mcal_1,\Omega_1, F, V)$ where $F \colon \Mcal_1^{(q)}\to \Mcal_1$ and $V \colon \Mcal_1\to \Mcal_1^{(q)}$. Define the flag space as the stack of triples $(\underline{\Mcal},\iota, \Fcal_\bullet)$ where $\Fcal_\bullet$ is a flag of $\Mcal_1$ as in \eqref{filtrF}. Define also $\Lcal_i$ for $1\leq i \leq n$ and $\Lcal(\underline{a})$ for $\underline{a}\in \ZZ^n$ as in \S\ref{sec-mod-inert}. For $d\leq r$, consider the composition
\begin{equation}
\xymatrix@1@M=5pt{
\Fcal_{r-d}^{(q)}/\Omega^{(q)}_1  \subset  \Mcal^{(q)}_1/\Omega^{(q)}_1  \ar[r]^-{F} & \Mcal_1 \ar[r] & \Mcal_1/\Fcal_{d}.
}
\end{equation}
We have $\det\left(\Fcal_{r-d}/\Omega_1\right)=\Lcal_r\otimes \dots \otimes \Lcal_{r-d+1}$ and $\det \left( \Mcal_1/\Fcal_{d}\right) = \Lcal_1\otimes \dots \otimes \Lcal_{d}$. Taking the determinant, we obtain a section of weight $\lambda_d$, and it corresponds to the Hasse invariant $\Ha_d$. Similarly, for $r<d\leq n-1$, consider the composition
\begin{equation}
\xymatrix@1@M=5pt{
\Fcal_{d}  \subset  \Mcal_1  \ar[r]^-{V} & \Omega^{(q)}_1 \ar[r] & \Omega^{(q)}_1 / \Fcal^{(q)}_{r+n-d}.
}
\end{equation}
We have $\det\left(\Fcal_{d}\right)=\Lcal_{d+1}\otimes \dots \otimes \Lcal_{n}$ and $\det \left(  \Omega_1 / \Fcal_{r+n-d} \right) = \Lcal_{r+1}\otimes \dots \otimes \Lcal_{r+n-d}$. Taking the determinant, we obtain a section of weight
\begin{equation}
  (\underbrace{0, \dots ,0}_{\textrm{$d$ times}},\underbrace{-1, \dots ,-1}_{\textrm{$n-d$ times}}) + (\underbrace{0, \dots , 0}_{\textrm{$r$ times}}, \underbrace{q, \dots ,q}_{\textrm{$n-d$ times}}, \underbrace{0, \dots ,0}_{\textrm{$d-r$ times}}) = \lambda_d - \lambda_{\det}.
\end{equation}
This section coincides with the partial Hasse invariant $\Ha_d$ up to multiplication by the determinant.

\newcommand{\etalchar}[1]{$^{#1}$}

\noindent
Naoki Imai\\
Graduate School of Mathematical Sciences, The University of Tokyo, 
3-8-1 Komaba, Meguro-ku, Tokyo, 153-8914, Japan \\
naoki@ms.u-tokyo.ac.jp\\ 

\noindent
Jean-Stefan Koskivirta\\
Department of Mathematics, Faculty of Science, Saitama University, 
255 Shimo-Okubo, Sakura-ku, Saitama City, Saitama 338-8570, Japan \\
jeanstefan.koskivirta@gmail.com


\begin{thebibliography}{ABD{\etalchar{+}}66}
	\providecommand{\url}[1]{\texttt{#1}}
	\providecommand{\urlprefix}{URL }
	\providecommand{\eprint}[2][]{\url{#2}}
	
	\bibitem[ABD{\etalchar{+}}66]{SGA3}
	M.~Artin, J.~E. Bertin, M.~Demazure, P.~Gabriel, A.~Grothendieck, M.~Raynaud
	and J.-P. Serre, S{G}{A}3: Sch\'emas en groupes, vol. 1963/64, Institut des
	Hautes \'Etudes Scientifiques, Paris, 1965/1966.
	
	\bibitem[AG05]{Andreatta-Goren-book}
	F.~Andreatta and E.~Goren, Hilbert modular forms: mod $p$ and $p$-adic aspects,
	vol. 173 of Mem. Amer. Math Soc., Amer. Math. Soc., 2005.
	
	\bibitem[BH17]{Bijakowski-Hernandez-Groupes-p-divisibles}
	S.~Bijakowski and V.~Hernandez, Groupes $p$-divisibles avec condition
	de~Pappas-Rapoport et invariants de Hasse, Journal de l'\'Ecole polytechnique
	- Math\'ematiques 4 (2017), 935--972.
	
	\bibitem[Bij18]{bijakowski-partial-Hasse}
	S.~Bijakowski, Partial Hasse Invariants, Partial Degrees, and the Canonical
	Subgroup, Canadian Journal of Mathematics 70 (2018), no.~4, 742--772.
	
	\bibitem[Del79]{Deligne-Shimura-varieties}
	P.~Deligne, Vari\'et\'es de {S}himura: {I}nterpr\'etation modulaire, et
	techniques de construction de mod\`eles canoniques, in Automorphic forms,
	representations and {$L$}-functions, {P}art 2, edited by A.~Borel and
	W.~Casselman, vol.~33 of Proc. Symp. Pure Math., Amer. Math. Soc.,
	Providence, RI, 1979 pp. 247--289, {P}roc. {S}ympos. {P}ure {M}ath., {O}regon
	{S}tate {U}niv., {C}orvallis, {OR}., 1977.
	
	\bibitem[DK17]{Diamond-Kassaei}
	F.~Diamond and P.~Kassaei, Minimal weights of {H}ilbert modular forms in
	characteristic {$p$}, Compos. Math. 153 (2017), no.~9, 1769--1778.
	
	\bibitem[DK23]{Diamond-Kassaei-cone-minimal}
	F.~Diamond and P.~L. Kassaei, The {C}one of {M}inimal {W}eights for {M}od $p$
	{H}ilbert {M}odular {F}orms, Int. Math. Res. Not. IMRN  (2023), no.~14,
	12148--12171.
	
	\bibitem[Don85]{Donkin-good-filtrations-LNM}
	S.~Donkin, Rational representations of algebraic groups: Tensor products and
	filtration, vol. 1140 of Lect. Notes in Math., Springer-Verlag, Berlin, 1985.
	
	\bibitem[GGN17]{Garibaldi-Guralnick-nakano-globally-irreducible}
	S.~Garibaldi, R.~M. Guralnick and D.~K. Nakano, Globally irreducible Weyl
	modules, Journal of Algebra 477 (2017), 69--87.
	
	\bibitem[GK18]{Goldring-Koskivirta-global-sections-compositio}
	W.~Goldring and J.-S. Koskivirta, Automorphic vector bundles with global
	sections on {$G$}-{Z}ip$^{\Zcal}$-schemes, Compositio Math. 154 (2018),
	2586--2605, \href{https://doi.org/10.2140/ant.2023.17.923}{DOI},
	\href{http://msp.org/idx/mr/4582533}{MR 4582533}.
	
	\bibitem[GK19a]{Goldring-Koskivirta-Strata-Hasse}
	W.~Goldring and J.-S. Koskivirta, Strata {H}asse invariants, {H}ecke algebras
	and {G}alois representations, Invent. Math. 217 (2019), no.~3, 887--984,
	\href{https://doi.org/10.1007/s00222-019-00882-5}{DOI}.
	
	\bibitem[GK19b]{Goldring-Koskivirta-zip-flags}
	W.~Goldring and J.-S. Koskivirta, Stratifications of flag spaces and
	functoriality, IMRN 2019 (2019), no.~12, 3646--3682.
	
	\bibitem[GN17]{Goldring-Nicole-mu-Hasse}
	W.~Goldring and M.-H. Nicole, The $\mu$-ordinary {H}asse invariant of unitary
	{S}himura varieties, J. Reine Angew. Math. 728 (2017), 137--151.
	
	\bibitem[Gor01]{Goren-partial-hasse}
	E.~Goren, Hasse invariants for {H}ilbert modular varieties, Israel J. Math. 122
	(2001), 157--174.
	
	\bibitem[GS69]{Griffiths-Schmid-homogeneous-complex-manifolds}
	P.~Griffiths and W.~Schmid, Locally homogeneous complex manifolds, Acta. Math.
	123 (1969), 253--302.
	
	\bibitem[Her18]{Hernandez-invariants-de-Hasse}
	V.~Hernandez, Invariants de Hasse $\mu $-ordinaires, Annales de l'Institut
	Fourier 68 (2018), no.~4, 1519--1607.
	
	\bibitem[HN17]{He-Nie-mu-ordinary}
	X.~He and S.~Nie, On the $\mu$-ordinary locus of a Shimura variety, Advances in
	Mathematics 321 (2017), 513--528.
	
	\bibitem[IK21]{Imai-Koskivirta-vector-bundles}
	N.~Imai and J.-S. Koskivirta, Automorphic vector bundles on the stack of
	$G$-zips, Forum Math. Sigma 9 (2021), Paper No. e37, 31 pp.
	
	\bibitem[IK22]{Goldring-Imai-Koskivirta-weights}
	N.~Imai and J.-S. Koskivirta, Weights of mod $p$ automorphic forms and partial
	Hasse invariants, 2022, with an appendix by W. Goldring. Preprint,
	arXiv:2211.16207.
	
	\bibitem[Jan03]{jantzen-representations}
	J.~Jantzen, Representations of algebraic groups, vol. 107 of Math. Surveys and
	Monographs, American Mathematical Society, Providence, RI, 2nd edn., 2003.
	
	\bibitem[Kis10]{Kisin-Hodge-Type-Shimura}
	M.~Kisin, Integral models for {S}himura varieties of abelian type, J. Amer.
	Math. Soc. 23 (2010), no.~4, 967--1012.
	
	\bibitem[KKLV89]{Knop-Kraft-Luna-Vust-Local-properties}
	F.~Knop, H.~Kraft, D.~Luna and T.~Vust, Local {P}roperties of {A}lgebraic
	{G}roup {A}ctions, in Transformationsgruppen und Invariantentheorie, vol.~13
	of DMV Sem., Birkhauser, 1989 pp. 63--75.
	
	\bibitem[Kos19]{Koskivirta-automforms-GZip}
	J.-S. Koskivirta, Automorphic forms on the stack of {$G$}-zips, Results Math.
	74 (2019), no.~3, Paper No. 91, 52 pp.
	
	\bibitem[KW18]{Koskivirta-Wedhorn-Hasse}
	J.-S. Koskivirta and T.~Wedhorn, Generalized {$\mu$}-ordinary {H}asse
	invariants, J. Algebra 502 (2018), 98--119.
	
	\bibitem[LS18]{Lan-Stroh-stratifications-compactifications}
	K.-W. Lan and B.~Stroh, Compactifications of subschemes of integral models of
	{S}himura varieties, Forum Math. Sigma 6 (2018), e18, 105pp.
	
	\bibitem[Mil90]{Milne-ann-arbor}
	J.~Milne, Canonical Models of (Mixed) {S}himura varieties and automorphic
	vector bundles, in Automorphic forms, {S}himura varieties, and
	{$L$}-functions, {V}ol.\ {I}, edited by L.~Clozel and J.~Milne, vol.~11 of
	Perspect. Math., Academic Press, Boston, MA, 1990 pp. 283--414, proc. Conf.
	held in {A}nn {A}rbor, {MI}, 1988.
	
	\bibitem[Moo04]{Moonen-Serre-Tate}
	B.~Moonen, Serre-{T}ate theory for moduli spaces of {PEL}-type, Ann. Sci. ENS
	37 (2004), no.~2, 223--269.
	
	\bibitem[PWZ11]{Pink-Wedhorn-Ziegler-zip-data}
	R.~Pink, T.~Wedhorn and P.~Ziegler, Algebraic zip data, Doc. Math. 16 (2011),
	253--300.
	
	\bibitem[PWZ15]{Pink-Wedhorn-Ziegler-F-Zips-additional-structure}
	R.~Pink, T.~Wedhorn and P.~Ziegler, ${F}$-zips with additional structure,
	Pacific J. Math. 274 (2015), no.~1, 183--236.
	
	\bibitem[Ros61]{Rosenlicht-toroidal}
	M.~Rosenlicht, Toroidal algebraic groups, Proc. Amer. Math. Soc. 12 (1961),
	984--988.
	
	\bibitem[RR85]{Ramanan-Ramanathan-projective-normality}
	S.~Ramanan and A.~Ramanathan, Projective normality of flag varieties and
	{S}chubert varieties, Invent. Math. 79 (1985), 217--224.
	
	\bibitem[Spr98]{Springer-Linear-Algebraic-Groups-book}
	T.~Springer, Linear Algebraic Groups, vol.~9 of Progress in Math., Birkhauser,
	2nd edn., 1998.
	
	\bibitem[Ste63]{steinberg-reps-alg-gps-nagoya}
	R.~Steinberg, Representations of Algebraic Groups, Nagoya Mathematical Journal
	22 (1963), 33-56.
	
	\bibitem[SYZ21]{Shen-Yu-Zhang-EKOR}
	X.~Shen, C.-F. Yu and C.~Zhang, E{KOR} strata for {S}himura varieties with
	parahoric level structure, Duke Math. J. 170 (2021), no.~14, 3111--3236.
	
	\bibitem[Vas99]{Vasiu-Preabelian-integral-canonical-models}
	A.~Vasiu, Integral canonical models of {S}himura varieties of preabelian type,
	Asian J. Math. 3 (1999), 401--518.
	
	\bibitem[Wed99]{Wedhorn-ordinariness-Shimura-varieties}
	T.~Wedhorn, Ordinariness in good reductions of {S}himura varieties of
	{PEL}-type, Ann. Sci. ENS 32 (1999), no.~5, 575--618.
	
	\bibitem[Wed14]{Wedhorn-bruhat}
	T.~Wedhorn, Bruhat strata and ${F}$-zips with additional structure, M\"unster
	J. Math. 7 (2014), no.~2, 529--556.
	
	\bibitem[Wor13]{Wortmann-mu-ordinary}
	D.~Wortmann, The $\mu$-ordinary locus for {S}himura varieties of {H}odge type,
	2013, preprint, arXiv:1310.6444.
	
	\bibitem[Zha18]{Zhang-EO-Hodge}
	C.~Zhang, Ekedahl-{O}ort strata for good reductions of {S}himura varieties of
	{H}odge type, Canad. J. Math. 70 (2018), no.~2, 451--480.
	
\end{thebibliography}
\end{document}